\title{Symplectic Extra-gradient Type Method for Solving General Non-monotone Inclusion Problem
\thanks{This work was partially supported by grant 12288201 from NSF of China.
}}
\author[1]{Ya-xiang Yuan}
\author[ 1,2]{Yi Zhang\thanks{Corresponding author: zhangyi2020@lsec.cc.ac.cn}  }
\affil[1]{Institute of Computational Mathematics and Scientific/Engineering Computing, Academy of Mathematics and Systems
	Science, Chinese Academy of Sciences, Beijing 100190, China}
\affil[2]{University of Chinese Academy of Sciences, Beijing 100049, China}
\newcommand{\inner}[1]{\left\langle#1\right\rangle}
\newcommand{\norm}[1]{\left\|#1\right\|}
\newcommand{\E}{\mathcal{E}}
\newcommand{\tr}{^\mathrm{T}}
\newcommand{\G}{\mathcal{G}}
\newcommand{\expect}[1]{\mathbb{E}\left[#1\right]}
\newcommand{\hh}{\mathcal{H}}
\newtheorem{theorem}{Theorem}
\newtheorem{lemma}[theorem]{Lemma}
\newtheorem{corollary}[theorem]{Corollary}
\newtheorem{proposition}{Proposition}
\newtheorem{example}{Example}
\newcommand{\dist}{\mathrm{dist}}
\newcommand{\zero}{\mathrm{zero}}
\begin{document}
	\maketitle

\begin{abstract}
	In recent years, accelerated extra-gradient methods have attracted much attention by researchers, for solving monotone inclusion problems. A limitation of most current accelerated extra-gradient methods lies in their direct utilization of the initial point, which can potentially decelerate numerical convergence rate. In this work, we present a new accelerated extra-gradient method, by utilizing the symplectic acceleration technique. We establish the inverse of quadratic convergence rate by employing the Lyapunov function technique. Also, we demonstrate a faster inverse of quadratic convergence rate alongside its weak convergence property under stronger assumptions. To improve practical efficiency, we introduce a line search technique for our symplectic extra-gradient method. Theoretically, we prove the convergence of the symplectic extra-gradient method with line search. Numerical tests show that this adaptation exhibits faster convergence rates in practice compared to several existing extra-gradient type methods.
\end{abstract}

\keywords{Extra-gradient method \and Symplectic acceleration \and Lyapunov function \and Comonotone operator}

\section{Introduction}
Min-max problem and min-max duality theory lie at the foundations of game theory, designing algorithms \cite{valkonen14, zhu08} and duality theory of mathematical programming \cite{bonnas00}, and have found far-reaching applications across a range of disciplines, including decision theory \cite{myerson91}, economics \cite{neumann04}, structural design \cite{taylor84}, control theory \cite{vasilyev10} and robust optimization \cite{bental09}. Recently, as burgeoning of Generative Adversarial Networks \cite{gidel18,goodfellow14} and adversarial attacks \cite{madry18}, solving min-max problem under nonconvex-nonconcave assumption has gained researchers' attention. However, due to the nonconvex-nonconcave assumption,  solving min-max problem exactly is nearly impossible. Similar to nonlinear programming, solving first-order stationary point of nonconvex-nonconcave min-max problem is relatively easier. The first-order stationary condition of most min-max problems can be described by the following inclusion problem
\begin{equation}
	0\in T(z):=F(z)+G(z),\quad z\in\hh,
	\label{eq:main}
\end{equation}
where $\hh$ is a real Hilbert space equipped with inner product $\inner{\cdot, \cdot}$ and norm $\norm{\cdot}=\sqrt{\inner{\cdot, \cdot}}$. Let $\zero(T)$ be the set of all solutions of \eqref{eq:main}. Throughout this paper, we assume $F: \hh\to\hh$ is a single-value $L-$Lipschitz continuous operator, $G: \hh\to 2^{\hh}$ is a set-valued maximally monotone operator and $\zero(T)$ is non-empty.

When $G=0$, $F$ is monotone and $L-$Lipschitz continuous, the extra-gradient (EG) method proposed in \cite{korpelevich76} and \cite{popov80} is an efficient method for solving \eqref{eq:main}. Theoretically, the EG method has $O(1/k)$ ergodic convergence rate without additional requirement on $F$. Practically, the EG method can be described as applying gradient descent step twice per iteration. Owning to its fascinating theoretical properties and simple recursive rule, extensions of the EG method have been studied, such as the mirror-prox method in \cite{juditsky11} and \cite{nemirovski04} when $G$ is the normal cone of convex set $C$, the Tseng's splitting method or forward-backward splitting method in \cite{raguet13} and \cite{tseng00} when $G\ne 0$ and the EG+ method \cite{diakonikolas21} to solve \eqref{eq:main} under non-monotone assumption. The adaptive EG+ method proposed in \cite{fan23}  and \cite{pethick22} further generalizes the EG+ method by considering an adaptive step-size. Here, we refer to the survey \cite{tran23} of the EG method for more existing theory about the EG method. As stochastic optimization continues to gain significant attention, research on stochastic extra-gradient method has seen a rise, such as \cite{gorbunov22}, \cite{iusem17}, \cite{kannan19} and \cite{mishchenko20}.

Nesterov's accelerated gradient method \cite{nesterov83} has stimulated research on acceleration technique. The first acceleration form of EG method for solving monotone inclusion problem, called extra anchored gradient (EAG) algorithm, was presented by \cite{yoon21}. From the observation that the EG method is closed to proximal point algorithm \cite{mokhtari20}, \cite{yoon21} uses the Halpern's iteration \cite{diakonikolas21,halpern67,qi21}, which is a widely studied acceleration technique for the proximal point algorithm, to derive the EAG algorithm. Also, the complexity lower bound of EG type method was proven to be $O(1/k^2)$ in \cite{yoon21}. Nowadays, studies of acceleration form of the EG method have increased. \cite{lee21} combined the EAG algorithm and the EG+ method and obtained the fast EG method, which can solve \eqref{eq:main} under non-monotone assumption. \cite{cai22} obtained an accelerated forward-backward splitting method by further generalizing the fast EG method.

However, the EAG type method is prone to oscillatory phenomena, which eventually slows down numerical performance. That is because the Halpern's iteration can be characterized as calculating convex combination of the initial starting point and the current feasible point. The use of the initial starting point may cause the current feasible point far away from the solution set, leading to oscillation phenomena. 
\subsection{Our Contributions}
In Section \ref{sec:3}, we exploit a recently proposed acceleration method, called \emph{symplectic acceleration} \cite{yi23}, to devise a novel accelerated variant of the extra-gradient method, named \emph{Symplectic Extra-Gradient (SEG) Method}. Capitalizing on the Lyapunov function framework proposed in \cite{yi23}, we establish that both the SEG and its extended versions exhibit a convergence rate of $O(1/k^2)$. Moreover, in Section \ref{sec:4}, we demonstrate that by imposing stricter conditions, we can prove the SEG type methods admits a faster $o(1/k^2)$ convergence rate and weak convergence property. To the best of our knowledge, the SEG type method is the first EG type method with proved  $o(1/k^2)$ convergence rate. A concise summary of these theoretical results is presented in Table \ref{tab:result}.

\begin{table}[ht]
	\centering
	\begin{tabular}{c|c|c}
		\toprule
		Methods&Convergence Rate& Weak Convergence Property\\
		\midrule
		EG+ \cite{diakonikolas21}&$O(1/k)$&-\\
		FEG \cite{lee21}&$O(1/k^2)$&-\\
		\textcolor{red}{SEG+ (This paper)}&\textcolor{red}{$o(1/k^2)$}&\textcolor{red}{\checkmark}\\
		\bottomrule
	\end{tabular}
	\caption{Illustration of theoretical results in this paper.}
	\label{tab:result}
\end{table}

In Section \ref{sec:5}, we address the challenge posed by the difficulty in precisely estimating the Lipschitz constant and the comonotone index, as well as handling situations involving rapid variations in both the local Lipschitz constant and the local comonotone index. To this end, we integrate the line search framework into the SEG framework. Under specified assumptions,  we prove  the convergence of the SEG method equipped with line search. Our numerical experiments demonstrate that our method performs better than several existing EG type methods.
\section{Preliminaries}
\subsection{Basic Concepts}
Let $\hh$ be a real Hilbert space, and let $T: \hh\to 2^{\hh}$ is a set-valued operator. The operator $T$ is said to be \emph{$\rho-$comonotone} if \[
\inner{u-v,x-y}\geqslant\rho\norm{u-v}^2,\quad \forall x,\  y\in\hh,\  u\in T(x),\  v\in T(y).
\]
When $\rho=0$, the concept of $0-$comonotone coincide with the concept of monotone. If $\rho>0$,  $\rho-$comonotone is the same as $\rho-$cocoercive. A monotone operator $G$ is \emph{maximally} if the graph of $G$ is not a proper subset of the graph of another monotone operator. The Minty surjectivity theorem \cite{minty62} shows that a monotone operator $G$ is maximal if and only if the domain of resolvent $J_G=(I+G)^{-1}$ of $G$ is $\hh$. Also, a continuous and monotone single-value operator is a maximally monotone operator. For further theories on comonotone operator, we refer to \cite{bauschke21}.
\subsection{Extra-gradient Method and Extra-gradient+ Method}
The extra-gradient method for solving zero-point of $L-$Lipschitz continuous and monotone operator $F$, i. e. solving\[
0=F(z),
\] 
is given as follows:\begin{equation}
	\begin{split}
		z_{k+\frac{1}{2}}&=z_k-sF(z_k),\\
		z_{k+1}&=z_k-sF(z_{k+\frac{1}{2}}).
	\end{split}
	\label{eq:eg}
\end{equation}
In \cite{korpelevich76} and \cite{tseng95}, the proof of convergence results of EG method is based on the following inequality:\[
\norm{z_{k+1}-z^*}^2\leqslant\norm{z_k-z^*}^2-(1-s^2L^2)\norm{F(z_k)}^2,\quad\forall 0=F(z^*).
\]
By summing the above inequality respect to $k$, one can easily show that the ergodic convergence rate of the EG method on $\norm{F(z_k)}^2$ is $O(1/k)$ when $0<s<\dfrac{1}{L}$. The result that the sequence $\{z_k\}$ converges weakly to a zero-point of $F$ relies on the following propositions.
\begin{proposition}[Lemma 2.47 in \cite{bauschke17}]
	\label{prop:discreteopial}
	Let $\{z_k\}$ be a sequence in $\hh$ and let $C$ be a nonempty subset of $\hh$. Suppose that, for every $z\in C$, $\norm{z_k-z}$ converges and that
	every weak sequential cluster point of $\{z_k\}$ belongs to $C$. Then $\{z_k\}$		converges weakly to a point in $C$.
\end{proposition}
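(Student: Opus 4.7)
The plan is to deduce that $\{z_k\}$ has a unique weak sequential cluster point, necessarily in $C$, and then to upgrade this to weak convergence of the full sequence. First I would fix any $z_0\in C$; the hypothesis that $\norm{z_k-z_0}$ converges implies that $\{z_k\}$ is bounded. Since $\hh$ is a Hilbert space, hence reflexive, every bounded sequence admits weakly convergent subsequences, so the set $W$ of weak sequential cluster points of $\{z_k\}$ is nonempty, and by the second hypothesis $W\subseteq C$.

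Next I would show $W$ is a singleton via the standard polarization-identity argument. Suppose $z^*,z^{**}\in W$ with subsequences $z_{k_i}\rightharpoonup z^*$ and $z_{k_j}\rightharpoonup z^{**}$. The algebraic identity
\begin{equation*}
\norm{z_k-z^*}^2-\norm{z_k-z^{**}}^2=2\inner{z_k,\,z^{**}-z^*}+\norm{z^*}^2-\norm{z^{**}}^2
\end{equation*}
holds for every $k$. Since $z^*,z^{**}\in C$, both terms on the left-hand side converge by hypothesis, so the left-hand side has a limit, and therefore $\inner{z_k,\,z^{**}-z^*}$ converges to some $\ell\in\R$. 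Passing to the limit along $k_i$ gives $\ell=\inner{z^*,\,z^{**}-z^*}$, and along $k_j$ gives $\ell=\inner{z^{**},\,z^{**}-z^*}$. Subtracting these two identities yields $\norm{z^{**}-z^*}^2=0$, so $z^*=z^{**}$.

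Finally, I would promote uniqueness of the weak cluster point to weak convergence of the full sequence by a standard subsequence argument: since $\{z_k\}$ is bounded, every subsequence of $\{z_k\}$ has a further weakly convergent subsequence, whose limit is a weak sequential cluster point of $\{z_k\}$ and therefore coincides with the unique element $z^\infty$ of $W$. This forces $z_k\rightharpoonup z^\infty\in C$.

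I do not anticipate a substantive obstacle: the only step requiring care is the verification that the left-hand side of the displayed identity actually has a limit, which is immediate from applying the \emph{Fej\'er-monotonicity-type} hypothesis to both $z^*$ and $z^{**}$ in $C$. Everything else is a verbatim Hilbert-space instance of the classical Opial lemma, and the subsequence-promotion step is standard once boundedness and cluster-point uniqueness are in hand.
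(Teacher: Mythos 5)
Your proof is correct and is precisely the classical argument for this Opial-type lemma: boundedness from Fej\'er-type convergence of $\norm{z_k-z}$, nonemptiness of the weak cluster set by reflexivity, uniqueness via the polarization identity evaluated along the two subsequences, and the standard subsequence promotion to weak convergence of the whole sequence. The paper states this result without proof, citing Lemma 2.47 of the reference, and your argument coincides with the proof given there.
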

\begin{proposition}[Proposition 20.38 (ii) in \cite{bauschke17}]
	\label{prop:weakclosed}
	Let $T: \hh\to 2^{\hh}$ be a maximally monotone operator, $\{z_k\}$ and $\{v_k\}$ be the sequences in $\hh$ such that $v_k\in T(z_k)$. If $z_k\rightharpoonup z$ and $v_k\to v$. Then $v\in T(z)$.
\end{proposition}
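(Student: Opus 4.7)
The plan is to exploit the defining property of maximal monotonicity: an element $(z,v)\in\hh\times\hh$ lies in the graph of $T$ if and only if $\inner{w-v,y-z}\geqslant 0$ for every pair $(y,w)$ with $w\in T(y)$. So I would reduce the goal $v\in T(z)$ to verifying this inequality against an arbitrary point of the graph, and produce the inequality by passing to the limit in the monotonicity inequality already satisfied along the sequence.

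Concretely, I would fix an arbitrary $(y,w)$ with $w\in T(y)$. Since $v_k\in T(z_k)$ and $T$ is (in particular) monotone, we have $\inner{w-v_k,y-z_k}\geqslant 0$ for every $k$. I would then pass to the limit as $k\to\infty$. Here I would use that $v_k\to v$ strongly, so $w-v_k\to w-v$ strongly, while $z_k\rightharpoonup z$ weakly, so $y-z_k\rightharpoonup y-z$ weakly. The inner product of a strongly convergent sequence with a weakly convergent sequence converges to the inner product of the limits, which yields $\inner{w-v,y-z}\geqslant 0$. Since $(y,w)$ was arbitrary, maximality of $T$ then forces $v\in T(z)$.

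The only delicate point — and the step I expect to get most attention — is the limit of the bilinear form $\inner{w-v_k,y-z_k}$ under mixed weak/strong convergence. I would justify it by the elementary decomposition
\[
\inner{w-v_k,y-z_k}-\inner{w-v,y-z}=\inner{w-v,(y-z_k)-(y-z)}+\inner{(w-v_k)-(w-v),y-z_k},
\]
where the first term tends to zero by definition of weak convergence $z_k\rightharpoonup z$, and the second term is bounded in absolute value by $\norm{v_k-v}\cdot\norm{y-z_k}$, which vanishes because $v_k\to v$ strongly and weak convergence of $\{z_k\}$ implies boundedness by the uniform boundedness principle. With this in hand, the passage to the limit is rigorous and the maximality argument closes the proof.
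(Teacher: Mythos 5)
Your proof is correct and follows the standard route: fix an arbitrary graph point $(y,w)$, pass to the limit in the monotonicity inequality $\inner{w-v_k,y-z_k}\geqslant 0$ using the strong convergence of $\{v_k\}$ against the weak convergence of $\{z_k\}$, and invoke maximality. This is essentially the same argument the paper itself uses when it proves the $\rho$-comonotone generalization in Lemma \ref{lem:comonotoneconverge} (the $\rho=0$ case of which is exactly this proposition); your write-up is in fact more careful about justifying the mixed weak/strong limit of the inner product, which the paper passes over silently.
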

The details of the proof of weak convergence property of the EG method can be found in \cite{nadezhkina06}. The EG+ method proposed in \cite{diakonikolas21} for solving zero-point of $L-$Lipschitz continuous and $\rho-$comonotone operator $F$ is given as follows:\begin{equation}
	\begin{split}
		z_{k+\frac{1}{2}}&=z_k-\frac{s}{\beta}F(z_k),\\
		z_{k+1}&=z_k-sF(z_{k+\frac{1}{2}}).
	\end{split}
	\label{eq:eg+}
\end{equation}
Similar to the EG method, \cite{diakonikolas21} shows that if $\beta=\dfrac{1}{2}, s=\dfrac{1}{2L}$, $\rho>-\dfrac{1}{8L}$, then we have\[
\norm{z_{k+1}-z^*}^2\leqslant\norm{z_k-z^*}-\frac{1}{4L}\left(\frac{1}{4L}+2\rho\right)\norm{F(z_{k+\frac{1}{2}})}^2,\quad\forall 0=F(z^*).
\]
By using the above inequality respect to $k$, one can prove the ergodic convergence rate of the EG+ method.
\subsection{Anchor Acceleration}
Since \cite{mokhtari20} suggested that the sequence generated by the EG method is closed to the sequence generated by the proximal point algorithm, recent study of acceleration form of the EG method is based on existing acceleration technique for proximal point algorithm. Among most acceleration form of proximal point algorithm, the Halpern's iteration \cite{halpern67} is one of the most wildly studied acceleration technique. The Halpern's iteration for solving zero-point of maximally monotone operator $F$ is
\begin{equation}
	z_{k+1}=\alpha_kz_0+(1-\alpha_k)J_F(z_k).
	\label{eq:halpern}
\end{equation}
Replacing the proximal point term by the EG step, we obtain the extra anchored gradient algorithm \cite{yoon21}:\begin{equation}
	\begin{split}
		z_{k+\frac{1}{2}}&= \alpha_kz_0+(1-\alpha_k)z_k-\beta_kF(z_k),\\
		z_{k+1}&= \alpha_kz_0+(1-\alpha_k)z_k-\beta_kF(z_{k+\frac{1}{2}}).
	\end{split}
	\label{eq:anchoracceleration}
\end{equation}
In \cite{yoon21}, \eqref{eq:anchoracceleration} was proven to be $O(1/k^2)$ convergence rate if $F$ is an $L-$Lipschitz continuous and monotone operator and the parameters in \eqref{eq:anchoracceleration} satisfy several assumptions. Combining the EG+ method and \eqref{eq:anchoracceleration}, \cite{lee21} gave the fast EG (FEG) method as follows:\begin{equation}
	\begin{split}
		z_{k+\frac{1}{2}}&= \alpha_kz_0+(1-\alpha_k)z_k-(1-\alpha_k)\left(\frac{1}{L}+2\rho\right)F(z_k),\\
		z_{k+1}&= \alpha_kz_0+(1-\alpha_k)z_k-\frac{1}{L}F(z_{k+\frac{1}{2}})-(1-\alpha_k)2\rho F(z_k).
	\end{split}
	\label{eq:feg}
\end{equation}
If $\alpha_k=\dfrac{1}{k+1}$, \eqref{eq:feg} admits $O(1/k^2)$ convergence rate. The convergence results of accelerated forward-backward splitting method in \cite{cai22} is similar to the convergence results \eqref{eq:feg}.
\subsection{Symplectic Acceleration}
\label{sec:2.4}
The symplectic acceleration technique proposed in \cite{yi23} is a new way to accelerate proximal point algorithm. The symplectic proximal point algorithm for solving zero-point of maximally monotone operator $F$ is given as follows:\begin{equation}
	\begin{split}
		\tilde{z}_{k+1}&=\frac{k}{k+r}z_k+\frac{r}{k+r}u_k,\\
		z_{k+1}&=J_F(z_k),\\
		u_{k+1}&=u_k+\frac{D}{r}(z_{k+1}-\tilde{z}_{k+1}).
	\end{split}
	\label{eq:sppa}
\end{equation} 
The reason why \eqref{eq:sppa} is called symplectic proximal point algorithm is that \eqref{eq:sppa} is obtained by applying  first-order implicit symplectic method to a first-order ODEs system. If $r>1$, $0<D< r-1$, then \eqref{eq:sppa} admits $O(1/k^2)$ convergence rate. Moreover, $o(1/k^2)$ convergence rate and weak convergence property of \eqref{eq:sppa} can be established by considering two auxiliary functions.

\section{Symplectic Extra-gradient Type Method}
\label{sec:3}

In this section, we introduce the symplectic extra-gradient+ framework for solving general inclusion problem\[
0\in T(z):=F(z)+G(z).
\]
At first, we explain how to propose symplectic extra-gradient framework by considering the zero-point problem $0=F(z)$. In Section \ref{sec:2.4}, we introduce the symplectic proximal point algorithm \eqref{eq:sppa}. \eqref{eq:sppa} can be generalized as follows:\begin{equation}
	\label{eq:sppageneral}
	\begin{split}
		\tilde{z}_{k+1}&=(1-\alpha_k) z_k+\alpha_k u_k,\\
		z_{k+1}&=J_F(\tilde{z}_{k+1}),\\
		u_{k+1}&=u_k+C_k(z_{k+1}-\tilde{z}_{k+1}).
	\end{split}
\end{equation}
To formulate the desired algorithm, we need a two-step modification to equation \eqref{eq:sppageneral}. Due to the fact that $\tilde{z}_{k+1}-z_{k+1}= F(z_{k+1})$, the update rule for $u_{k+1}$ is transformed to\[
u_{k+1}=u_k-C_kF(z_{k+1}).
\] Moreover, similar to \eqref{eq:anchoracceleration}, we replace the proximal point step by the EG step and obtain the \emph{symplectic extra-gradient (SEG) method} described as follows:
\begin{subequations}
	\begin{align}
		\tilde{z}_{k+1}&=(1-\alpha_k) z_k+\alpha_k u_k,\\
		z_{k+\frac{1}{2}}&=\tilde{z}_{k+1}-\beta_kF(z_k),\label{eq:generalsegb}\\
		z_{k+1}&=\tilde{z}_{k+1}-\beta_kF(z_{k+\frac{1}{2}}),\\
		u_{k+1}&=u_k-C_kF(z_{k+1}).
	\end{align}
	\label{eq:seg}
\end{subequations}
The coefficients $\{\alpha_k\}, \{\beta_k\}$ in \eqref{eq:seg} are positive for all $k\geqslant 1$. First, we notice that if we does not impose any constrains on $\{\alpha_k\}$ and $\{C_k\}$, the EG method and EAG algorithm can be included in \eqref{eq:seg}. Specifically, let $\alpha_k\equiv 1, \beta_k\equiv\dfrac{1}{L}$, \eqref{eq:seg} can be simplified as \eqref{eq:eg}, and let $C_k\equiv 0$, \eqref{eq:seg} is the same as \eqref{eq:anchoracceleration}.  However, it is noteworthy that despite the apparent requirement of three gradient evaluations per iteration in \eqref{eq:seg}, a strategic reordering of the iterative process can economize computations to merely two gradient evaluations per loop. This is achieved through the adoption of the following alternative iteration rule:
\begin{align*}
	u_{k+1}&=u_k-C_kF(z_k),\\
	\tilde{z}_{k+1}&=(1-\alpha_k) z_k+\alpha_k u_{k+1},\\
	z_{k+\frac{1}{2}}&=\tilde{z}_{k+1}-\beta_kF(z_k),\\
	z_{k+1}&=\tilde{z}_{k+1}-\beta_kF(z_{k+\frac{1}{2}}).
\end{align*}
The employment of such a reordering methodology can decrease the quantity of necessary gradient evaluations within subsequent algorithms. Consequently, further expounding on this technique will be omitted in the remaining text.

It should be noted that the SEG method is only suitable for solving zero-point of $F$ under monotone assumption, coupled with complicate convergence results discussed in Section \ref{app:segrate}. As such, we focus on the symplectic extra-gradient+ framework in the remaining text. Inspired by the FEG method \eqref{eq:feg} proposed in \cite{lee21}, the corresponding \emph{symplectic extra-gradient+ (SEG+) method}  for solving zero-point of $L-$Lipschitz continuous and $\rho-$comonotone operator $F$ is given as follows:\begin{subequations}
	\begin{align}
		\tilde{z}_{k+1}&=(1-\alpha_k) z_k+\alpha_k u_k,\\
		z_{k+\frac{1}{2}}&=\tilde{z}_{k+1}-(1-\alpha_k)\left(\frac{1}{L}+2\rho\right)F(z_k),\\
		z_{k+1}&=\tilde{z}_{k+1}-\dfrac{1}{L}F(z_{k+\frac{1}{2}})-(1-\alpha_k)2\rho F(z_k),\\
		u_{k+1}&=u_k-C_kF(z_{k+1}).
	\end{align}
	\label{eq:seg+}
\end{subequations}

Motivated by Tseng's splitting algorithm \cite{tseng00} and the accelerated forward-backward splitting method \cite{cai22}, we can further generalize the SEG+ method and propose the \emph{symplectic forward-backward splitting (SFBS) method} with $\alpha_k=\dfrac{r}{k+r}$ and $C_k=\dfrac{D}{r}$, which is described in Algorithm \ref{al:sfbsm}.
\begin{algorithm}[ht]
	\label{al:sfbsm}
	\caption{Symplectic Forward-Backward Splitting Method, SFBS}
	\textbf{Input: } Operators $F$ and $G$, $L\in(0,+\infty), \rho\in\left(-\dfrac{1}{2L},+\infty\right)$\;
	\textbf{Initialization: }$z_0$, $u_0=z_0$, $\tilde{G}_0=0$\;
	\For{$k=0, 1, \cdots$}{
		$\tilde{z}_{k+1}=\dfrac{k}{k+r}z_k+\dfrac{r}{k+r}u_k$\;
		$z_{k+\frac{1}{2}}=\tilde{z}_{k+1}-\dfrac{k}{k+r}\left(\dfrac{1}{L}+2\rho\right)(F(z_k)+\tilde{G}_k)$\;
		$z_{k+1}=J_{L^{-1}G}\left(\tilde{z}_{k+1}-\dfrac{1}{L}F(z_{k+\frac{1}{2}})-\dfrac{2\rho k}{k+r}(F(z_k)+\tilde{G}_k)\right)$\;
		$\tilde{G}_{k+1}=L\left[\tilde{z}_{k+1}-z_{k+1}-\dfrac{1}{L}F(z_{k+\frac{1}{2}})-\dfrac{2\rho k}{k+r}(F(z_k)+\tilde{G}_k)\right]$\;
		$u_{k+1}=u_k-\dfrac{D}{r}(F(z_{k+1})+\tilde{G}_{k+1})$.
	}
\end{algorithm}

Next, we apply the Lyapunov function technique to demonstrate the convergence rates of \eqref{eq:seg+}. Here, inspired by the Lyapunov function in \cite{lee21} for proving the convergence rates of the FEG method and the Lyapunov function in \cite{yi23} for proving the convergence rates of the symplectic proximal point algorithm, the Lyapunov function for Algorithm \ref{al:sfbsm} is \begin{equation}
	\begin{split}
		\E(k):=&\ \left[\frac{Dk^2}{2L}+\rho Dk(k-r)\right]\norm{F(z_k)+\tilde{G}_k}^2\\
		&\ +Drk\inner{F(z_k)+\tilde{G}_k,z_k-u_k}+\frac{r^3-r^2}{2}\norm{u_k-z^*}^2,
	\end{split}
	\label{lya:seg+}
\end{equation}
where $z^*\in\zero(T)$. Given the fundamental role that the difference of the Lyapunov function, i. e. $\E(k+1)-\E(k)$, plays in establishing convergence rates for \eqref{eq:seg+}, it becomes imperative to estimate the upper bound of the difference. By estimating the upper bound of $\E(k+1)-\E(k)$, we can give the convergence results of Algorithm \ref{al:sfbsm}  in Theorem \ref{thm:sfbsmrate}.
\begin{theorem}
	\label{thm:sfbsmrate}
	Let $\{z_k\}$ and $\{u_k\}$ be the sequences generated by Algorithm \ref{al:sfbsm}. If $r>1$ and $0<D\leqslant(r-1)\left(\dfrac{1}{L}+2\rho\right)$, then we have
	\[
	\left(\frac{1}{2L}+\rho-\frac{D}{2(r-1)}\right)k^2\dist(0, T(z_k))^2\leqslant\frac{r^3-r^2}{2D}\cdot\dist(z_0,\zero(T))^2,\quad\forall k\geqslant 1.
	\]
	Thus if $D<(r-1)\left(\dfrac{1}{L}+2\rho\right)$, the last-iterate convergence rate of Algorithm \ref{al:sfbsm} is \[
	\dist(0,T(z_k))^2\leqslant\frac{(r-1)^2r^2}{\left[(r-1)\left(\dfrac{1}{L}+2\rho\right)D-D^2\right]k^2}\cdot\dist(z_0,\zero(T))^2, \quad\forall k\geqslant 1.
	\]
	In addition, we have\begin{align*}
		&\ \sum_{k=0}^{\infty}\frac{1}{2}\left[(r-1)\left(\frac{1}{L}+2\rho\right)-D\right](2k+r+1)\norm{F(z_{k+1})+\tilde{G}_{k+1}}^2\\
		\leqslant&\ \frac{r^3-r^2}{2D}\cdot\dist(z_0,\zero(T))^2. 
	\end{align*}
\end{theorem}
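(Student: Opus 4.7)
The first step is to observe that, by the defining property of $J_{L^{-1}G}$, the auxiliary vector $\tilde{G}_{k+1}\in G(z_{k+1})$, so $F(z_k)+\tilde{G}_k\in T(z_k)$ and in particular $\dist(0,T(z_k))\leqslant\norm{F(z_k)+\tilde{G}_k}$. Consequently the theorem reduces to suitable upper and lower bounds on $\E(k)$, which I plan to extract from a single-step descent relation.

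The heart of the argument is the per-iteration descent inequality
\[\E(k+1)-\E(k)\leqslant -\frac{D}{2}\left[(r-1)\left(\frac{1}{L}+2\rho\right)-D\right](2k+r+1)\norm{F(z_{k+1})+\tilde{G}_{k+1}}^2.\]
To derive it, I would expand $\E(k+1)-\E(k)$ into its three natural blocks. For the anchor block, the identity $u_{k+1}-u_k=-(D/r)(F(z_{k+1})+\tilde{G}_{k+1})$ expands $\norm{u_{k+1}-z^*}^2-\norm{u_k-z^*}^2$ into a squared-norm and a term $\inner{F(z_{k+1})+\tilde{G}_{k+1},u_k-z^*}$. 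For the cross block and the residual block, the two extra-gradient lines together with the resolvent step provide identities for $\tilde{z}_{k+1}-z_{k+1}$ and $z_{k+\frac{1}{2}}-z_{k+1}$ in terms of $F(z_k),F(z_{k+\frac{1}{2}}),\tilde{G}_k,\tilde{G}_{k+1}$, and allow every $u_k$ to be rewritten via $\tilde{z}_{k+1}=\frac{k}{k+r}z_k+\frac{r}{k+r}u_k$. After these substitutions, I would invoke the $\rho$-comonotonicity of $F$ (between $z_{k+\frac{1}{2}},z_{k+1}$ and between $z_{k+1},z^*$), the maximal monotonicity of $G$ through $\inner{\tilde{G}_{k+1}-\tilde{G}_k,z_{k+1}-z_k}\geqslant 0$, and the $L$-Lipschitz property of $F$ to control $\norm{F(z_{k+\frac{1}{2}})-F(z_k)}\leqslant L\norm{z_{k+\frac{1}{2}}-z_k}$. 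The main obstacle is this algebra: the three ingredients must combine in a way that matches the exact coefficients chosen in $\E$, so that only the claimed negative term involving $(2k+r+1)$ remains.

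Granting the descent, telescoping from $k=0$ to $k=N-1$ and noting that $u_0=z_0$ and $\tilde{G}_0=0$ make all $k$-indexed coefficients in $\E(0)$ vanish, I obtain
\[\E(N)+\sum_{k=0}^{N-1}\frac{D}{2}\left[(r-1)\left(\frac{1}{L}+2\rho\right)-D\right](2k+r+1)\norm{F(z_{k+1})+\tilde{G}_{k+1}}^2\leqslant\frac{r^3-r^2}{2}\norm{z_0-z^*}^2.\]
Minimizing the right-hand side over $z^*\in\zero(T)$ replaces $\norm{z_0-z^*}^2$ by $\dist(z_0,\zero(T))^2$, and letting $N\to\infty$ together with dividing through by $D$ yields the claimed summability.

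For the last-iterate bound I need a matching lower bound on $\E(k)$. I would decompose $z_k-u_k=(z_k-z^*)-(u_k-z^*)$ in the cross term. Young's inequality applied to the piece $-Drk\inner{F(z_k)+\tilde{G}_k,u_k-z^*}$ with weight $\lambda=Dk/(r(r-1))$ produces exactly $-\tfrac{r^3-r^2}{2}\norm{u_k-z^*}^2$, absorbed by the corresponding block of $\E(k)$, at the cost of a penalty $\tfrac{D^2k^2}{2(r-1)}\norm{F(z_k)+\tilde{G}_k}^2$. The remaining piece $Drk\inner{F(z_k)+\tilde{G}_k,z_k-z^*}$ is controlled via the $\rho$-comonotonicity of $F$ and the monotonicity of $G$ (using $0=F(z^*)+\tilde{G}^*$ for some $\tilde{G}^*\in G(z^*)$) and contributes enough to offset the low-order $-\rho Drk$ term coming from $\tfrac{Dk^2}{2L}+\rho Dk(k-r)$. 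Assembling these pieces yields $\E(k)\geqslant D\left[\tfrac{1}{2L}+\rho-\tfrac{D}{2(r-1)}\right]k^2\norm{F(z_k)+\tilde{G}_k}^2$, and combining this with $\E(k)\leqslant\tfrac{r^3-r^2}{2}\dist(z_0,\zero(T))^2$ and the opening observation $\dist(0,T(z_k))\leqslant\norm{F(z_k)+\tilde{G}_k}$ gives the first claimed inequality. Dividing through by the (positive, when $D<(r-1)(\tfrac{1}{L}+2\rho)$) bracketed constant then extracts the explicit $O(1/k^2)$ last-iterate rate.
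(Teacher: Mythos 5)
Your proposal follows essentially the same route as the paper: the same Lyapunov function, the same three-block decomposition of $\E(k+1)-\E(k)$, the same telescoped descent inequality with coefficient $\frac{D}{2}\left[(r-1)\left(\frac{1}{L}+2\rho\right)-D\right](2k+r+1)$, and a lower bound on $\E(k)$ that is algebraically identical to the paper's completion of the square (your Young-inequality step with weight $Dk/(r(r-1))$ produces exactly the paper's term $\frac{1}{2}\norm{\frac{Dk}{\sqrt{r-1}}(F(z_k)+\tilde{G}_k)-\sqrt{r^3-r^2}(u_k-z^*)}^2$). One substantive caveat: the comonotonicity you should invoke is that of the \emph{sum} $T=F+G$, applied to the pairs $(z_{k+1},z_k)$ and $(z_k,z^*)$ with the selections $F(z_j)+\tilde{G}_j\in T(z_j)$ and $0\in T(z^*)$. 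Your stated combination --- $\rho$-comonotonicity of $F$ alone plus $\inner{\tilde{G}_{k+1}-\tilde{G}_k,z_{k+1}-z_k}\geqslant 0$ --- yields only $\inner{\tilde{T}(z_{k+1})-\tilde{T}(z_k),z_{k+1}-z_k}\geqslant\rho\norm{F(z_{k+1})-F(z_k)}^2$, which for $\rho<0$ is weaker than the needed $\rho\norm{\tilde{T}(z_{k+1})-\tilde{T}(z_k)}^2$; the latter is what cancels the cross term $-2D\rho k(k+r)\inner{\tilde{T}(z_{k+1}),\tilde{T}(z_k)}$ in the expansion of block II, and the analogous bound at $z^*$ is what offsets the $-\rho Drk$ contribution in the lower bound on $\E(k)$. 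With that correction the algebra closes exactly as in the paper.
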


\begin{proof}
	Let $\tilde{T}(z_k)=F(z_k)+\tilde{G}_k$. By the definition of $\tilde{G}_k$, we have
	\begin{align*}
		z_{k+1} &=\tilde{z}_{k+1}-\frac{1}{L}[F(z_{k+\frac{1}{2}})+\tilde{G}_{k+1}]-\frac{2\rho k}{k+r}\tilde{T}(z_k).
	\end{align*}
	With above equation, we obtain the following useful equations \begin{align}
		k(z_{k+1}-z_k)&=r(u_k-z_{k+1})-\frac{k+r}{L}[F(z_{k+\frac{1}{2}})+\tilde{G}_{k+1}]-2\rho k\tilde{T}(z_k),\label{eq:usefulthm1_a}\\
		r(z_k-u_k)&=-(k+r)(z_{k+1}-z_k)-\frac{k+r}{L}[F(z_{k+\frac{1}{2}})+\tilde{G}_{k+1}]-2\rho k\tilde{T}(z_k)\label{eq:usefulthm1_b}.
	\end{align}
	
	Also, by $L-$Lipschitz continuity of $F$, we have\[
	\norm{F(z_{k+1})-F(z_{k+\frac{1}{2}})}^2\leqslant L^2\norm{z_{k+1}-z_{k+\frac{1}{2}}}^2.
	\]
	Since \[
	z_{k+1}-z_{k+\frac{1}{2}}=\frac{1}{L}\left[\frac{k}{k+r}\tilde{T}(z_k)-F(z_{k+\frac{1}{2}})-\tilde{G}_{k+1}\right],
	\]
	we have
	\begin{equation}
		\norm{F(z_{k+1})-F(z_{k+\frac{1}{2}})}^2\leqslant\norm{\frac{k}{k+r}\tilde{T}(z_k)-F(z_{k+\frac{1}{2}})-\tilde{G}_{k+1}}^2.
		\label{eq:usefulthm1_c}
	\end{equation}

	Recall the  Lyapunov function
	\[\E(k)=\left[\frac{Dk^2}{2L}+\rho Dk(k-r)\right]\norm{\tilde{T}(z_k)}^2+Drk\inner{\tilde{T}(z_k),z_k-u_k}+\frac{r^3-r^2}{2}\norm{u_k-z^*}^2.
	\]
	First, we divide the difference $\E(k+1)-\E(k)$ into three parts.  \begin{align*}
		&\ \E(k+1)-\E(k)\\
		=&\ \underbrace{\left[\frac{D(k+1)^2}{2L}+\rho D(k+1)(k+1-r)\right]\norm{\tilde{T}(z_{k+1})}^2-\left[\frac{Dk^2}{2L}+\rho Dk(k-r)\right]\norm{\tilde{T}(z_k)}^2}_{\text{I}}\\
		&\ +\underbrace{Dr(k+1)\inner{\tilde{T}(z_{k+1}),z_{k+1}-u_{k+1}}-Drk\inner{\tilde{T}(z_k),z_k-u_k}}_{\text{II}} \\
		&\ +\underbrace{\frac{r^3-r^2}{2}\left[\norm{u_{k+1}-z^*}^2-\norm{u_k-z^*}^2\right]}_{\text{III}}.
	\end{align*}
	
	Secondly, we reckon the upper bound of II and III. First we consider II.
	\begin{align*}
		\text{II}=&\ Dr\inner{\tilde{T}(z_{k+1}),z_{k+1}-u_{k+1}}+Drk\inner{\tilde{T}(z_{k+1}),z_{k+1}-z_k-u_{k+1}+u_k} \\
		&\ +Drk\inner{\tilde{T}(z_{k+1})-\tilde{T}(z_k),z_k-u_k}.
	\end{align*}
	Due to \eqref{eq:usefulthm1_a} and the equality $u_{k+1}-u_k=-\dfrac{D}{r}[F(z_{k+1})+\tilde{G}_{k+1}]$, we have
	\begin{equation}
		\begin{split}
			&\ Drk\inner{\tilde{T}(z_{k+1}),z_{k+1}-z_k-u_{k+1}+u_k}\\
			=&\ Dr\inner{\tilde{T}(z_{k+1}),r(u_k-z_{k+1})-\frac{k+r}{L}[F(z_{k+\frac{1}{2}})+\tilde{G}_{k+1}]-2\rho k \tilde{T}(z_k)}\\
			&\ +D^2k\norm{\tilde{T}(z_{k+1})}^2.
		\end{split}
		\label{eq:thm1_a}
	\end{equation}
	In addition, because of \eqref{eq:usefulthm1_b}, we have\begin{equation}
		\begin{split}
			&\ Drk\inner{\tilde{T}(z_{k+1})-\tilde{T}(z_k),z_k-u_k}\\
			=&\ Dk\inner{\tilde{T}(z_{k+1})-\tilde{T}(z_k),-(k+r)(z_{k+1}-z_k)-\frac{k+r}{L}[F(z_{k+\frac{1}{2}})+\tilde{G}_{k+1}]-2\rho k\tilde{T}(z_k)}.
		\end{split}
		\label{eq:thm1_b}
	\end{equation}
	Also,
	\begin{equation}
		\label{eq:thm1_c}
		Dr\inner{\tilde{T}(z_{k+1}),z_{k+1}-u_{k+1}}=D^2\norm{\tilde{T}(z_{k+1})}^2+Dr\inner{\tilde{T}(z_{k+1}),z_{k+1}-u_k}.
	\end{equation}
	By summing \eqref{eq:thm1_a}, \eqref{eq:thm1_b} and \eqref{eq:thm1_c}, we have
	\begin{align*}
		&\ \text{II}\\
		= &\ -\frac{D(k+r)^2}{L}\inner{F(z_{k+1}), F(z_{k+\frac{1}{2}})}+\frac{D}{L}\inner{F(z_{k+\frac{1}{2}}), k(k+r)\tilde{T}(z_k)-(k+r)^2\tilde{G}_{k+1}} \\
		&\ +\frac{D}{L}\inner{\tilde{G}_{k+1}, k(k+r)\tilde{T}(z_k)-(k+r)^2\tilde{T}(z_{k+1})}\\
		&\ +D^2(k+1)\norm{\tilde{T}(z_{k+1})}^2-Dk(k+r)\inner{\tilde{T}(z_{k+1})-\tilde{T}(z_k), z_{k+1}-z_k}\\
		&\ -2D\rho k(k+r)\inner{\tilde{T}(z_{k+1}),\tilde{T}(z_k)}+2D\rho k^2\norm{\tilde{T}(z_k)}^2\\
		&\ +D(r-r^2)\inner{\tilde{T}(z_{k+1}), z_{k+1}-u_k}.
	\end{align*}
	Combining $\rho-$comonotonicity of $F$, \eqref{eq:usefulthm1_c} and the following equality \[
	-\inner{F(z_{k+1}),F(z_{k+\frac{1}{2}})}=\frac{1}{2}\left[\norm{F(z_{k+1})-F(z_{k+\frac{1}{2}})}^2-\norm{F(z_{k+1})}^2-\norm{F(z_{k+\frac{1}{2}})}^2\right],
	\]
	we obtain the following estimation
	\begin{align*}
		\text{II}\leqslant&-\frac{D(k+r)^2}{2L}\norm{\tilde{T}(z_{k+1})}^2-\rho Dk(k+r)\left(\norm{\tilde{T}(z_{k+1})}^2+\norm{\tilde{T}(z_k)}^2\right)\\
		&+\underbrace{D(r-r^2)\inner{\tilde{T}(z_{k+1}),z_{k+1}-u_k}}_{\text{II}_1}+D^2(k+1)\norm{\tilde{T}(z_{k+1})}^2+\frac{Dk^2}{2L}\norm{\tilde{T}(z_k)}^2.
	\end{align*}
	
	Now estimate III. Owing to the equation
	\begin{equation}
		\frac{1}{2}\norm{a}^2-\frac{1}{2}\norm{b}^2=\inner{a-b,b}+\frac{1}{2}\norm{a-b}^2,
		\label{eq:normdifference}
	\end{equation}
	we have\begin{align*}
		\text{III}&=(r^3-r^2)\inner{u_{k+1}-u_k,u_k-z^*}+\frac{r^3-r^2}{2}\norm{u_{k+1}-u_k}^2\\
		&=-\underbrace{D(r^2-r)\inner{\tilde{T}(z_{k+1}),u_k-z^*}}_{\text{III}_1}+\frac{D^2(r-1)}{2}\norm{\tilde{T}(z_{k+1})}^2.
	\end{align*}
	
	By summing previous estimation, we can deduce the upper bound of $\E(k+1)-\E(k)$. Since \[\text{II}_1-\text{III}_1=-D(r^2-r)\inner{\tilde{T}(z_{k+1}),z_{k+1}-z^*},\]  the upper bound of $\E(k+1)-\E(k)$ can be simplified as follows:\begin{align*}
		&\E(k+1)-\E(k)\\
		\leqslant&\left[-(r-1)\left(\frac{1}{L}+2\rho\right)Dk+D^2k-\frac{Dr^2-D}{2L}+\rho D(1-r)+\frac{D^2(r+1)}{2}\right]\norm{\tilde{T}(z_{k+1})}^2 \\
		&-D(r^2-r)\inner{F(z_{k+1})+\tilde{G}_{k+1},z_{k+1}-z^*}\\
		\leqslant&-\frac{D}{2}\left[(r-1)\left(\frac{1}{L}+2\rho\right)-D\right](2k+r+1)\norm{\tilde{T}(z_{k+1})}^2.
	\end{align*}
	Since $r> 1$, $D\leqslant (r-1)\left(\dfrac{1}{L}+2\rho\right)$, we have\[
	\E(k+1)-\E(k)\leqslant 0.
	\]
	
	Finally, we can obtain the convergence rates of Algorithm \ref{al:sfbsm}. First, we translate $\E(k)$ into the following form.
	\begin{align*}
		\E(k)=&\left[\frac{Dk^2}{2L}+\rho Dk(k-r)-\frac{D^2k^2}{2(r-1)}\right]\norm{\tilde{T}(z_k)}^2+Drk\inner{\tilde{T}(z_k),z_k-z^*}\\
		&+\frac{1}{2}\norm{\frac{Dk}{\sqrt{r-1}}\tilde{T}(z_k)-\sqrt{r^3-r^2}(u_k-z^*)}^2.
	\end{align*}
	By $\rho-$comonotonicity of $T$, we have
	\begin{align*}
		\E(k)\geqslant&\  \left[\left(\frac{1}{2L}+\rho\right)Dk^2-\frac{D^2k^2}{2(r-1)}\right]\norm{\tilde{T}(z_k)}^2\\
		&\ +\frac{1}{2}\norm{\frac{Dk}{\sqrt{r-1}}\tilde{T}(z_k)-\sqrt{r^3-r^2}(u_k-z^*)}^2. 
	\end{align*}
	Since $D\leqslant (r-1)\left(\dfrac{1}{L}+2\rho\right)$, $\left(\dfrac{1}{2L}+\rho\right)Dk^2-\dfrac{D^2k^2}{2(r-1)}\geqslant 0$. Thus $\E(k)\geqslant 0$ and \[
	\left[\left(\frac{1}{2L}+\rho\right)Dk^2-\frac{D^2k^2}{2(r-1)}\right]\norm{\tilde{T}(z_k)}^2\leqslant\E(k)\leqslant\E(0)=\frac{r^3-r^2}{2}\norm{z_0-z^*}^2.
	\] 
	By taking infimum respect to all $z^*\in\zero(T)$, we obtain the desire result. Moreover, by summing $\E(k+1)-\E(k)$ from $0$ to $\infty$ and taking infimum respect to all $z^*\in\zero(T)$, we have
	\begin{align*}
		&\ \sum_{k=0}^\infty\frac{1}{2}\left[(r-1)\left(\frac{1}{L}+2\rho\right)-D\right](2k+r+1)\norm{\tilde{T}(z_{k+1})}^2\\
		\leqslant&\ \frac{r^3-r^2}{2D}\dist(z_0,\zero(T))^2. 
	\end{align*}
\end{proof}

Next, we consider a special monotone inclusion problem:
\begin{equation}
	0\in T(z)=F(z)+N_C(z).
	\label{eq:mainconvexset}
\end{equation}
Here $N_C(z)=\{z^*\in\hh|\inner{z^*,z'-z}\leqslant 0, \forall z'\in C\}$ is the normal cone of convex subset $C$ at $z$ and $F$ is $L-$Lipschitz continuous and monotone. The inclusion problem is equivalent to the Stampacchia Variational Inequality problem\begin{equation}
	\inner{F(z), z'-z}\geqslant 0,\quad\forall z'\in C.
\end{equation}
\begin{example}
	\label{ex:matrixgame}
	Consider the matrix game problem\[
	\min_{x\in\Delta_m}\max_{y\in\Delta_n} x\tr Ay,
	\]
	where $\Delta_m$ is the $m-1$-dimensional unit simplex. Let $C=\Delta_m\times\Delta_n$, the first-order characterization of matrix game is \[
	0\in \binom{Ay}{-A\tr x}+N_C(x,y).
	\]
\end{example}

Based on the projected EG method in \cite{korpelevich76} and the accelerated projected EG method in \cite{cai22}, we propose the \emph{symplectc projected extra-gradient+ (SPEG+) method} , described in Algorithm \ref{al:spegm}.
\begin{algorithm}
	\label{al:spegm}
	\caption{Symplectic Projected Extra-gradient+ Method, SPEG+}
	\textbf{Input: }Operator $F$, Convex set $C$,  $L\in(0,+\infty)$\;
	\textbf{Initialization: }$z_0, u_0=z_0$\;
	\For{$k=0, 1, \cdots$}{
		$\tilde{z}_{k+1} = \dfrac{k}{k+r}z_k+\dfrac{r}{k+r}u_k$\;
		$z_{k+\frac{1}{2}} = P_C\left(\tilde{z}_{k+1}-\dfrac{k}{(k+r)L}F(z_k)\right)$\;
		$z_{k+1} = P_C\left(\tilde{z}_{k+1}-\dfrac{1}{L}F(z_{k+\frac{1}{2}})\right)$\;
		$\tilde{C}_{k+1}  = L(\tilde{z}_{k+1}-z_{k+1})-F(z_{k+\frac{1}{2}})$\;
		$u_{k+1} = u_k-\dfrac{D}{r}[F(z_{k+1})+\tilde{C}_{k+1}]$.
	}	
\end{algorithm}

The corresponding convergence results in given as follows.
\begin{theorem}
	\label{thm:spegrate}
	Let $\{z_k\}$ and $\{u_k\}$ be the sequences generated by Algorithm \ref{al:spegm}. If $F$ is $L-$Lipschitz and monotone, $C$ is closed and convex, $r>1$ and $0<D\leqslant\dfrac{r-1}{L}$, then we have
	\begin{align*}
		\left(\frac{1}{2L}-\frac{D}{2(r-1)}\right)k^2\dist(0, T(z_k))^2&\leqslant\frac{r^3-r^2}{2D}\cdot\dist(z_0,\zero(T))^2,\quad\forall k\geqslant 1,\\
		Drk\inner{F(z_k)+\tilde{C}_k, z_k-z^*}&\leqslant\frac{r^3-r^2}{2}\norm{z_0-z^*}^2,\quad\forall z^*\in\zero(T).
	\end{align*}
	Thus, if $D<\dfrac{r-1}{L}$, the last-iterate convergence rates of Algorithm \ref{al:spegm} are
	\begin{align*}
		\dist(0, T(z_k))^2&\leqslant\cfrac{r^2(r-1)^2}{\left[\cfrac{r-1}{L}D-D^2\right]k^2}\cdot\dist(z_0,\zero(T))^2,\quad\forall k\geqslant 1,\\
		\inner{F(z_k)+\tilde{C}_k, z_k-z^*}&\leqslant\frac{r^2-r}{2Dk}\norm{z_0-z^*}^2,\quad\forall z^*\in\zero(T).
	\end{align*}
	In addition, we have
	\begin{align*}
		& \sum_{k=0}^\infty\frac{1}{2}\left(\frac{r-1}{L}-D\right)(2k+r+1)\norm{F(z_{k+1})+\tilde{C}_{k+1}}^2\leqslant\ \frac{r^3-r^2}{2}\dist(z_0,\zero(T))^2,\\
		& \sum_{k=0}^{\infty}\frac{(k+r)^2}{2L}\norm{\tilde{C}_{k+\frac{1}{2}}-\frac{k}{k+r}\tilde{C}_k}^2\leqslant\frac{r^3-r^2}{2D}\dist(z_0,\zero(T))^2.
	\end{align*}
\end{theorem}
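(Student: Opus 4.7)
The plan is to adapt the Lyapunov argument of Theorem~\ref{thm:sfbsmrate}, specialized to $\rho=0$ and with the resolvent $J_{L^{-1}G}$ replaced by the projection $P_C$. For a fixed $z^*\in\zero(T)$ I would take
\[
\E(k):=\frac{Dk^2}{2L}\norm{F(z_k)+\tilde{C}_k}^2+Drk\inner{F(z_k)+\tilde{C}_k,\,z_k-u_k}+\frac{r^3-r^2}{2}\norm{u_k-z^*}^2,
\]
with the convention $\tilde{C}_0=0$. The optimality conditions of the two projections yield $\tilde{C}_{k+\frac{1}{2}}:=L(\tilde{z}_{k+1}-z_{k+\frac{1}{2}})-\frac{k}{k+r}F(z_k)\in N_C(z_{k+\frac{1}{2}})$ and $\tilde{C}_{k+1}\in N_C(z_{k+1})$, together with the identity $L(z_{k+1}-z_{k+\frac{1}{2}})=\frac{k}{k+r}F(z_k)+\tilde{C}_{k+\frac{1}{2}}-F(z_{k+\frac{1}{2}})-\tilde{C}_{k+1}$ that will play the role of \eqref{eq:usefulthm1_c}.

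I would then decompose $\E(k+1)-\E(k)$ into the three blocks $\mathrm{I}$, $\mathrm{II}$, $\mathrm{III}$ exactly as in the SFBS proof, replacing $\rho$-comonotonicity of $F$ by plain monotonicity of $T=F+N_C$ (which remains maximally monotone) and using the Lipschitz bound $\norm{F(z_{k+1})-F(z_{k+\frac{1}{2}})}^2\leqslant L^2\norm{z_{k+1}-z_{k+\frac{1}{2}}}^2$ together with the identity just derived. The standard bookkeeping then yields
\[
\E(k+1)-\E(k)\leqslant -\frac{D}{2}\left(\frac{r-1}{L}-D\right)(2k+r+1)\norm{F(z_{k+1})+\tilde{C}_{k+1}}^2\leqslant 0
\]
under $r>1$ and $0<D\leqslant(r-1)/L$. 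Completing the square in $u_k-z^*$ rewrites
\[
\E(k)=\left[\frac{Dk^2}{2L}-\frac{D^2k^2}{2(r-1)}\right]\norm{F(z_k)+\tilde{C}_k}^2+Drk\inner{F(z_k)+\tilde{C}_k,z_k-z^*}+\frac{1}{2}\left\|\frac{Dk}{\sqrt{r-1}}(F(z_k)+\tilde{C}_k)-\sqrt{r^3-r^2}(u_k-z^*)\right\|^2,
\]
whose inner-product term is non-negative by monotonicity of $T$ at $z^*$. Comparing with $\E(0)=\tfrac{r^3-r^2}{2}\norm{z_0-z^*}^2$ and infimizing over $z^*\in\zero(T)$ delivers both last-iterate bounds, and telescoping $\E(k+1)-\E(k)$ from $0$ to $\infty$ gives the first summability estimate.

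The main obstacle is the second summability bound, involving $\norm{\tilde{C}_{k+\frac{1}{2}}-\frac{k}{k+r}\tilde{C}_k}^2$, which has no counterpart in Theorem~\ref{thm:sfbsmrate}. It appears because the SPEG+ update uses only $F(z_k)$ (not $F(z_k)+\tilde{C}_k$) in the first projection step, so the identity for $L(z_{k+1}-z_{k+\frac{1}{2}})$ splits as
\[
L(z_{k+1}-z_{k+\frac{1}{2}})=\Bigl[\tfrac{k}{k+r}(F(z_k)+\tilde{C}_k)-F(z_{k+\frac{1}{2}})-\tilde{C}_{k+1}\Bigr]+\Bigl[\tilde{C}_{k+\frac{1}{2}}-\tfrac{k}{k+r}\tilde{C}_k\Bigr].
\]
My plan is to carry the SPEG+ Lipschitz estimate through the calculation of block $\mathrm{II}$ in this split form, exploiting the projection characterizations $\inner{\tilde{C}_{k+\frac{1}{2}},y-z_{k+\frac{1}{2}}}\leqslant 0$ and $\inner{\tilde{C}_k,y-z_k}\leqslant 0$ for all $y\in C$ (in particular $y=z_k$ and $y=z_{k+\frac{1}{2}}$) to cancel the cross-term between the two brackets, so that $\tfrac{(k+r)^2}{2L}\norm{\tilde{C}_{k+\frac{1}{2}}-\tfrac{k}{k+r}\tilde{C}_k}^2$ remains as additional non-positive slack in the upper bound on $\E(k+1)-\E(k)$. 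Telescoping then produces the claimed second summability estimate.
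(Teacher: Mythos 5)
Your proposal follows essentially the same route as the paper's proof: the identical Lyapunov function (the $\rho=0$ specialization of the SFBS one), the same three-block decomposition of $\E(k+1)-\E(k)$, the same completion of the square plus monotonicity of $T$ for the lower bound on $\E(k)$, and the same mechanism — the normal-cone inequalities coming from the two projections — to control the cross terms so that $-\frac{D}{2L}\norm{(k+r)\tilde{C}_{k+\frac{1}{2}}-k\tilde{C}_k}^2$ survives as non-positive slack and telescopes into the second summability estimate. No substantive differences.
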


\begin{proof}
	Similar to the proof for Theorem \ref{thm:sfbsmrate} , we need to obtain some useful equalities and inequalities  similar to \eqref{eq:usefulthm1_a}-\eqref{eq:usefulthm1_c}. Here we let $\tilde{C}_{k+\frac{1}{2}}=L\left(\tilde{z}_{k+1}-z_{k+\frac{1}{2}}-\dfrac{k}{(k+r)L}F(z_k)\right)$. By equivalent characterization of projection operator, we have\[
	\tilde{C}_{k+1}\in N_C(z_{k+1}),\quad \tilde{C}_{k+\frac{1}{2}}\in N_C(z_{k+\frac{1}{2}}).
	\]Also by the definition of $\tilde{C}_{k+1}$ and $\tilde{C}_{k+\frac{1}{2}}$, we have\begin{align}
		z_{k+1} & = \tilde{z}_{k+1}-\frac{1}{L}[F(z_{k+\frac{1}{2}})+\tilde{C}_{k+1}],\label{eq:usefulthm2_d}\\
		z_{k+\frac{1}{2}} & = \tilde{z}_{k+1}-\frac{k}{(k+r)L}F(z_k)-\frac{1}{L}\tilde{C}_{k+\frac{1}{2}}\label{eq:usefulthm2_e}.
	\end{align} 
	Using \eqref{eq:usefulthm2_d} and \eqref{eq:usefulthm2_e}, we obtain the following useful equations \begin{align}
		k(z_{k+1}-z_k)&=r(u_k-z_{k+1})-\frac{k+r}{L}[F(z_{k+\frac{1}{2}})+\tilde{C}_{k+1}],\label{eq:usefulthm2_a}\\
		r(z_k-u_k)&=-(k+r)(z_{k+1}-z_k)-\frac{k+r}{L}[F(z_{k+\frac{1}{2}})+\tilde{C}_{k+1}].\label{eq:usefulthm2_b}
	\end{align}
	Also. by $L-$Lipschitz continuity of $F$, we have\[
	\norm{F(z_{k+1})-F(z_{k+\frac{1}{2}})}^2\leqslant L^2\norm{z_{k+1}-z_{k+\frac{1}{2}}}^2.
	\]
	Since $\tilde{C}_{k+\frac{1}{2}}\in N_C(z_{k+\frac{1}{2}}), z_{k+1}\in C$, we have\[
	\inner{\tilde{C}_{k+\frac{1}{2}},z_{k+\frac{1}{2}}-z_{k+1}}\geqslant 0.
	\]
	Thus
	\begin{align*}
		\norm{z_{k+1}-z_{k+\frac{1}{2}}}^2\leqslant&\norm{z_{k+1}-z_{k+\frac{1}{2}}}^2+\frac{2}{L}\inner{\tilde{C}_{k+\frac{1}{2}},z_{k+\frac{1}{2}}-z_{k+1}}\\
		=&\norm{z_{k+1}-z_{k+\frac{1}{2}}-\frac{1}{L}\tilde{C}_{k+\frac{1}{2}}}^2-\frac{1}{L^2}\norm{\tilde{C}_{k+\frac{1}{2}}}^2.
	\end{align*}
	By subtracting \eqref{eq:usefulthm2_d} by \eqref{eq:usefulthm2_e}, we obtain
	\begin{equation}
		\norm{F(z_{k+1})-F(z_{k+\frac{1}{2}})}^2\leqslant\norm{\frac{k}{k+r}F(z_k)-F(z_{k+\frac{1}{2}})-\tilde{C}_{k+1}}^2-\norm{\tilde{C}_{k+\frac{1}{2}}}.
		\label{eq:usefulthm3_c}
	\end{equation}

	Next, we consider the following Lyapunov function\begin{equation}
		\E(k)=\frac{Dk^2}{2L}\norm{F(z_k)+\tilde{C}_k}^2+Drk\inner{F(z_k)+\tilde{C}_k,z_k-u_k}+\frac{r^3-r^2}{2}\norm{u_k-z^*}^2,
	\end{equation}
	where $z^*\in\zero(T)$. First, we divide the difference $\E(k+1)-\E(k)$ into three parts.  \begin{align*}
		&\ \E(k+1)-\E(k)\\
		=&\ \underbrace{\frac{D(k+1)^2}{2L}\norm{F(z_{k+1})+\tilde{C}_{k+1}}^2-\frac{Dk^2}{2L}\norm{F(z_k)+\tilde{C}_k}^2}_{\text{I}}\\
		&\ +\underbrace{Dr(k+1)\inner{F(z_{k+1})+\tilde{C}_{k+1},z_{k+1}-u_{k+1}}-Drk\inner{F(z_k)+\tilde{C}_k,z_k-u_k}}_{\text{II}} \\
		&\ +\underbrace{\frac{r^3-r^2}{2}\left[\norm{u_{k+1}-z^*}^2-\norm{u_k-z^*}^2\right]}_{\text{III}}.
	\end{align*}
	
	Secondly, we reckon the upper bound of II and III. First we consider II. II can be split into the following three terms.
	\begin{align*}
		\text{II}=&\ Dr\inner{F(z_{k+1})+\tilde{C}_{k+1},z_{k+1}-u_{k+1}}\\
		&\ +Drk\inner{F(z_{k+1})+\tilde{C}_{k+1},z_{k+1}-z_k-u_{k+1}+u_k} \\
		&\ +Drk\inner{F(z_{k+1})+\tilde{C}_{k+1}-F(z_k)-\tilde{C}_k,z_k-u_k}.
	\end{align*}
	Due to \eqref{eq:usefulthm2_a} and the equality $u_{k+1}-u_k=-\dfrac{D}{r}[F(z_{k+1})+\tilde{C}_{k+1}]$, we have
	\begin{equation}
		\begin{split}
			&\ Drk\inner{F(z_{k+1}),z_{k+1}-z_k-u_{k+1}+u_k}\\
			=&\ Dr\inner{F(z_{k+1})+\tilde{C}_{k+1},r(u_k-z_{k+1})-\frac{k+r}{L}[F(z_{k+\frac{1}{2}})+\tilde{C}_{k+1}]}\\
			&\ +D^2k\norm{F(z_{k+1})+\tilde{C}_{k+1}}^2.
		\end{split}
		\label{eq:thm2_a}
	\end{equation}
	In addition, because of \eqref{eq:usefulthm2_b}, we have\begin{equation}
		\begin{split}
			&\ Drk\inner{F(z_{k+1})-F(z_k)+\tilde{C}_{k+1}-\tilde{C}_k,z_k-u_k}\\
			=&\ Dk\inner{F(z_{k+1})+\tilde{C}_{k+1}-F(z_k)-\tilde{C}_k,-(k+r)(z_{k+1}-z_k)-\frac{k+r}{L}[F(z_{k+\frac{1}{2}})+\tilde{C}_{k+1}]}.
		\end{split}
		\label{eq:thm2_b}
	\end{equation}
	Also,
	\begin{equation}
		\label{eq:thm2_c}
		\begin{split}
			&\ Dr\inner{F(z_{k+1})+\tilde{C}_{k+1},z_{k+1}-u_{k+1}}\\
			=&\ D^2\norm{F(z_{k+1})+\tilde{C}_{k+1}}^2+Dr\inner{F(z_{k+1})+\tilde{C}_{k+1},z_{k+1}-u_k}.
		\end{split}
	\end{equation}
	By summing \eqref{eq:thm2_a}, \eqref{eq:thm2_b} and \eqref{eq:thm2_c}, we have
	\begin{align*}
		&\ \text{II}\\
		=&\ -\frac{D(k+r)^2}{L}\inner{F(z_{k+1}),F(z_{k+\frac{1}{2}})}+\frac{Dk(k+r)}{L}\inner{F(z_k),F(z_{k+\frac{1}{2}})}-\frac{D(k+r)^2}{L}\norm{\tilde{C}_{k+1}}^2\\
		&\ -\frac{D}{L}\inner{\tilde{C}_{k+1},(k+r)^2[F(z_{k+\frac{1}{2}})+F(z_{k+1})]-k(k+r)[F(z_k)+\tilde{C}_k]}+\frac{Dk(k+r)}{L}\inner{\tilde{C}_k,F(z_{k+\frac{1}{2}})}\\
		&\ -Dk(k+r)\inner{F(z_{k+1})+\tilde{C}_{k+1}-F(z_k)-\tilde{C}_k,z_{k+1}-z_k}\\
		&\ +D(r-r^2)\inner{F(z_{k+1})+\tilde{C}_{k+1},z_{k+1}-u_k}+D^2(k+1)\norm{F(z_{k+1})+\tilde{C}_{k+1}}^2.
	\end{align*}
	Combining $\rho-$comonotonicity of $F$, \eqref{eq:usefulthm3_c} and the following equality \[
	-\inner{F(z_{k+1}),F(z_{k+\frac{1}{2}})}=\frac{1}{2}\left[\norm{F(z_{k+1})-F(z_{k+\frac{1}{2}})}^2-\norm{F(z_{k+1})}^2-\norm{F(z_{k+\frac{1}{2}})}^2\right],
	\]
	we can obtain the upper bound of II as follows:
	\begin{align*}
		\text{II}
		\leqslant&\ -\frac{D(k+r)^2}{2L}\norm{F(z_{k+1})+\tilde{C}_{k+1}}^2+Dk(k+r)\inner{\tilde{C}_k,z_{k+1}-z_k+\frac{1}{L}[F(z_{k+\frac{1}{2}})+\tilde{C}_{k+1}]}\\
		&\ -Dk(k+r)\inner{F(z_{k+1})+\tilde{C}_{k+1}-F(z_k),z_{k+1}-z_k}-\frac{D(k+r)^2}{2L}\norm{\tilde{C}_{k+\frac{1}{2}}}\\
		&\ +D(r-r^2)\inner{F(z_{k+1})+\tilde{C}_{k+1},z_{k+1}-u_k}+D^2(k+1)\norm{F(z_{k+1})+\tilde{C}_{k+1}}^2+\frac{Dk^2}{2L}\norm{F(z_k)}^2.
	\end{align*}
	Since 
	\[z_{k+1}+\frac{1}{L}[F(z_{k+\frac{1}{2}})+\tilde{C}_{k+1}]=z_{k+\frac{1}{2}}+\frac{k}{(k+r)L}F(z_k)+\frac{1}{L}\tilde{C}_{k+1},\]
	we have
	\begin{align*}
		&\ Dk(k+r)\inner{\tilde{C}_k,z_{k+1}-z_k+\frac{1}{L}[F(z_{k+\frac{1}{2}})+\tilde{C}_{k+1}]}\\
		=&\ Dk(k+r)\inner{\tilde{C}_k, z_{k+\frac{1}{2}}-z_k+\frac{1}{L}\left(\frac{k}{k+r}F(z_k)+\tilde{C}_{k+\frac{1}{2}}\right)}.
	\end{align*}
	Thus\begin{align*}
		\text{II}\leqslant & \ -\frac{D(k+r)^2}{2L}\norm{F(z_{k+1})+\tilde{C}_{k+1}}^2+\frac{Dk^2}{2L}\norm{F(z_k)+\tilde{C}_k}^2-\frac{D}{2L}\norm{(k+r)\tilde{C}_{k+\frac{1}{2}}-k\tilde{C}_k}^2 \\
		& \ -Dk(k+r)\inner{F(z_{k+1})+\tilde{C}_{k+1}-F(z_k),z_{k+1}-z_k}+Dk(k+r)\inner{\tilde{C}_k, z_{k+\frac{1}{2}}-z_k}                                                     \\
		& \ +D(r-r^2)\inner{F(z_{k+1})+\tilde{C}_{k+1},z_{k+1}-u_k}+D^2(k+1)\norm{F(z_{k+1})+\tilde{C}_{k+1}}^2.
	\end{align*}
	Now we estimate III. Owing to \eqref{eq:normdifference}, we have\begin{align*}
		\text{III}&=(r^3-r^2)\inner{u_{k+1}-u_k,u_k-z^*}+\frac{r^3-r^2}{2}\norm{u_{k+1}-u_k}^2\\
		&=-\underbrace{D(r^2-r)\inner{F(z_{k+1})+\tilde{C}_{k+1},u_k-z^*}}_{\text{III}_1}+\frac{D^2(r-1)}{2}\norm{F(z_{k+1})+\tilde{C}_{k+1}}^2.
	\end{align*}
	
	Next, we can deduce the upper bound of $\E(k+1)-\E(k)$. Due to \[\text{II}_1-\text{III}_1=-D(r^2-r)\inner{F(z_{k+1})+\tilde{C}_{k+1},z_{k+1}-z^*},\]
	the upper bound of $\E(k+1)-\E(k)$ can be simplified as follows:\begin{align*}
		&\ \E(k+1)-\E(k)\\
		\leqslant&\ -\frac{D}{2}\left(\frac{r-1}{L}-D\right)(2k+r+1)\norm{F(z_{k+1})+\tilde{C}_{k+1}}^2 \\
		&\ -D(r^2-r)\inner{F(z_{k+1})+\tilde{C}_{k+1},z_{k+1}-z^*}-Dk(k+r)\inner{F(z_{k+1})+\tilde{C}_{k+1}-F(z_k),z_{k+1}-z_k}\\
		& \ +Dk(k+r)\inner{\tilde{C}_k, z_{k+\frac{1}{2}}-z_k}-\frac{D}{2L}\norm{(k+r)\tilde{C}_{k+\frac{1}{2}}-k\tilde{C}_k}^2.
	\end{align*}
	Because $\tilde{C}_{k+1}\in N_C(z_{k+1})$, $\tilde{C}_{k}\in N_C(z_k)$, we have\[
	\inner{\tilde{C}_{k+1}, z_k-z_{k+1}}\leqslant 0,\quad\inner{\tilde{C}_k,z_{k+\frac{1}{2}}-z_k}\leqslant 0.
	\]
	In conclusion 
	\begin{align*}
		&\ \E(k+1)-\E(k)\\
		\leqslant&\ -\frac{D}{2}\left(\frac{r-1}{L}-D\right)(2k+r+1)\norm{F(z_{k+1})+\tilde{C}_{k+1}}^2-\frac{D}{2L}\norm{(k+r)\tilde{C}_{k+\frac{1}{2}}-k\tilde{C}_k}^2\\
		\leqslant&\ 0.
	\end{align*}
	
	Finally, we can obtain the convergence results of Algorithm \ref{al:spegm}. First, we translate $\E(k)$ into the following form.
	\begin{align*}
		\E(k)=&\  \left(\frac{D}{2L}-\frac{D^2}{2(r-1)}\right)k^2\norm{F(z_k)+\tilde{C}_k}^2+Drk\inner{F(z_k)+\tilde{C}_k, z_k-z^*}\\
		&\ +\frac{1}{2}\norm{\frac{Dk}{\sqrt{r-1}}[F(z_k)+\tilde{C}_k]-\sqrt{r^3-r^2}(u_k-z^*)}^2.
	\end{align*}
	Since $F$ is monotone, $\E(k)$ can be represented by the sum of three non-negative terms. Then we have
	\begin{align*}
		\left(\frac{D}{2L}-\frac{D^2}{2(r-1)}\right)k^2\dist(0,T(z_k))^2&\leqslant\E(k) \leqslant\E(0)=\frac{r^3-r^2}{2}\norm{z_0-z^*}^2. \\
		Drk\inner{F(z_k)+\tilde{C}_k, z_k-z^*} & \leqslant\E(k) \leqslant\E(0)=\frac{r^3-r^2}{2}\norm{z_0-z^*}^2.
	\end{align*}
	Taking infimum respect to all $z^*\in\zero(T)$ on both sides of the first above inequality and using the inequality $\dist(0,T(z_k))\leqslant\norm{F(z_k)+\tilde{C}_k}$, we obtain the desire result. Moreover, by summing $\E(k+1)-\E(k)$ from $0$ to $\infty$, we have
	\begin{align*}
		&\sum_{k=0}^\infty\frac{D}{2}\left(\frac{r-1}{L}-D\right)(2k+r+1)\norm{F(z_{k+1})+\tilde{C}_{k+1}}^2\leqslant\frac{r^3-r^2}{2}\norm{z_0-z^*}^2,\\
		&\sum_{k=0}^{\infty}\frac{(k+r)^2}{2L}\norm{\tilde{C}_{k+\frac{1}{2}}-\frac{k}{k+r}\tilde{C}_k}^2\leqslant\frac{r^3-r^2}{2}\norm{z_0-z^*}^2.
	\end{align*}
	Also, taking infimum respect to all $z^*\in\zero(T)$, we obtain the desire results.
\end{proof}

\section{Sharper Rate}
\label{sec:4}
In this section, we will build up the $o(1/k^2)$ convergence rate of the SEG type method. It is noteworthy that the symplectic proximal point algorithm admits an $o(1/k^2)$ convergence rate if the assumption $D<r-1$ holds. Analogously, one may suggest that the symplectic EG type method admits $o(1/k^2)$ convergence rate when $D<(r-1)\left(\dfrac{1}{L}+2\rho\right)$. However, the assumption $D<(r-1)\left(\dfrac{1}{L}+2\rho\right)$ alone is not enough. The main reason is that the EG type method tries to use $F(z_{k+\frac{1}{2}})$ to approximate $F(z_{k+1})$, but the error term $\norm{F(z_{k+\frac{1}{2}})-F(z_{k+1})}^2$ can not be estimated directly. To overcome this problem, we need additional assumption on the step-size in the SEG type method. Here, we make use of Algorithm \ref{al:sfbsm} to illustrate the desire result.
\begin{lemma}[Estimation of error]
	\label{lem:error}
	Let $\{z_{\frac{k}{2}}\}$, $\{\tilde{z}_{k+1}\}$ and $\{u_k\}$ be the sequences generated by \begin{align}
		\tilde{z}_{k+1} & = \frac{k}{k+r}z_k+\frac{r}{k+r}u_k,\label{eq:sfbsma} \\
		z_{k+\frac{1}{2}} & = \tilde{z}_{k+1} - \frac{k}{k+r}(s+2\rho) (F(z_k)+\tilde{G}_k),\\
		z_{k+1} &= J_{\frac{1}{s}G}\left[\tilde{z}_{k+1} -sF(z_{k+\frac{1}{2}})-\frac{k}{k+r}2\rho(F(z_k)+\tilde{G}_k)\right],\\
		\tilde{G}_{k+1}&= s^{-1}\left[\tilde{z}_{k+1}-z_{k+1}-sF(z_{k+\frac{1}{2}})-\frac{k}{k+r}2\rho(F(z_k)+\tilde{G}_k)\right],\\
		u_{k+1} &= u_k-\frac{D}{r}[F(z_{k+1})+\tilde{G}_{k+1}].\label{eq:sfbsme}
	\end{align}
	If $F$ is $L-$Lipschitz continuous and $s<\dfrac{1}{L}$, there exists a positive constant $C$ such that\[
	\norm{F(z_{k+1})-F(z_{k+\frac{1}{2}})}^2\leqslant C\norm{F(z_{k+1})+\tilde{G}_{k+1}-F(z_k)-\tilde{G}_k}^2.
	\]
\end{lemma}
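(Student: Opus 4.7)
The plan is to propagate the Lipschitz estimate $\norm{F(z_{k+1}) - F(z_{k+\frac{1}{2}})} \leqslant L\norm{z_{k+1} - z_{k+\frac{1}{2}}}$ through the algorithm's update rules and then re-express the right-hand side in terms of the increment $a_{k+1} - a_k$, where I abbreviate $a_k := F(z_k) + \tilde{G}_k$ and $c_k := F(z_{k+\frac{1}{2}}) + \tilde{G}_{k+1}$.

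First, subtracting the defining equations for $z_{k+1}$ and $z_{k+\frac{1}{2}}$ and using the relation $s\tilde{G}_{k+1} = \tilde{z}_{k+1} - z_{k+1} - sF(z_{k+\frac{1}{2}}) - \tfrac{2\rho k}{k+r}a_k$ built into the algorithm, the $2\rho$-terms cancel and one obtains the clean identity
\[
z_{k+1} - z_{k+\frac{1}{2}} \;=\; s\Bigl[\tfrac{k}{k+r}\,a_k - c_k\Bigr].
\]
The Lipschitz bound then gives $\norm{\epsilon_k} \leqslant sL\,\norm{\tfrac{k}{k+r}a_k - c_k}$, where $\epsilon_k := F(z_{k+1}) - F(z_{k+\frac{1}{2}})$. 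Next, I would use $c_k = a_{k+1} - \epsilon_k$ (immediate from $a_{k+1} = F(z_{k+1}) + \tilde{G}_{k+1}$) to rewrite
\[
\tfrac{k}{k+r}\,a_k - c_k \;=\; \epsilon_k - (a_{k+1} - a_k) - \tfrac{r}{k+r}\,a_k,
\]
apply the triangle inequality, and absorb $\norm{\epsilon_k}$ into the left-hand side, which is legal precisely because $sL < 1$.

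The main obstacle is the residual $\tfrac{r}{k+r}\norm{a_k}$ produced by this algebra: it is not pointwise dominated by $\norm{a_{k+1} - a_k}$ on its own. Regrouping instead as $\tfrac{k}{k+r}a_k - c_k = -\bigl(a_{k+1} - \tfrac{k}{k+r}a_k\bigr) + \epsilon_k$ sidesteps the residual and yields the cleaner estimate $\norm{\epsilon_k}^2 \leqslant \bigl(\tfrac{sL}{1-sL}\bigr)^2 \norm{a_{k+1} - \tfrac{k}{k+r}a_k}^2$. To recover the claimed form, I would relate $a_{k+1} - \tfrac{k}{k+r}a_k$ to $a_{k+1} - a_k$ by triangle inequality and control the residual $\tfrac{r}{k+r}\norm{a_k}$ via the a~priori uniform bound on $\norm{F(z_k) + \tilde G_k\,}$ inherited from Theorem \ref{thm:sfbsmrate}, absorbing it into a $k$-independent constant $C = C(s, L, r)$. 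Squaring with $(x+y)^2 \leqslant 2x^2 + 2y^2$ then delivers the stated inequality.
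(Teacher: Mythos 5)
Your algebra up to the absorption step is correct and, in fact, more careful than the paper's: the identity $z_{k+1}-z_{k+\frac{1}{2}}=s\bigl[\tfrac{k}{k+r}a_k-c_k\bigr]$, the substitution $c_k=a_{k+1}-\epsilon_k$, and the self-bounding argument using $sL<1$ (which is exactly the paper's $\norm{x+y}^2\leqslant C_2\norm{x}^2+C_1\norm{y}^2$ device) all check out, and you correctly conclude $\norm{\epsilon_k}\leqslant\frac{sL}{1-sL}\norm{a_{k+1}-\tfrac{k}{k+r}a_k}$. The paper's own proof never meets the residual you isolate because it silently writes $\tilde{T}(z_k)-\tilde{T}(z_{k+1})$ where the faithful computation gives $\tfrac{k}{k+r}\tilde{T}(z_k)-\tilde{T}(z_{k+1})$; you have identified a real imprecision there.

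However, your final step is a genuine gap. The target inequality is homogeneous in the increment: it bounds $\norm{\epsilon_k}^2$ by a constant multiple of $\norm{a_{k+1}-a_k}^2$, and $a_{k+1}-a_k$ can be arbitrarily small (even zero) while $a_k\neq 0$. No uniform bound on $\norm{a_k}$ — not even the $O(1/k)$ decay from Theorem \ref{thm:sfbsmrate} — lets you dominate $\tfrac{r}{k+r}\norm{a_k}$ by $C\norm{a_{k+1}-a_k}$ with $C$ independent of $k$; "absorbing it into a $k$-independent constant" conflates an additive error with a multiplicative one. What your (correct) intermediate estimate actually yields after squaring is
\begin{equation*}
\norm{F(z_{k+1})-F(z_{k+\frac{1}{2}})}^2\leqslant\frac{2s^2L^2}{(1-sL)^2}\left(\norm{\tilde{T}(z_{k+1})-\tilde{T}(z_k)}^2+\frac{r^2}{(k+r)^2}\norm{\tilde{T}(z_k)}^2\right),
\end{equation*}
which is weaker than the lemma's statement, though still strong enough for every downstream use in Theorem \ref{thm:fastsfbsm} (since $\sum_k k\norm{\tilde{T}(z_k)}^2<+\infty$ makes the extra term summable against the weight $k$). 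To obtain the lemma exactly as stated you would need to either prove the literal statement is recoverable by other means or amend the statement; the proposal as written does neither.
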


\begin{proof}
	First, we show that for all $C_1>1$, there exists a constant $C_2>1$ such that
	\begin{equation}
		\norm{x+y}^2\leqslant C_2\norm{x}^2+C_1\norm{y}^2,\quad\forall x, y\in\hh.
		\label{eq:errorestimation}
	\end{equation}
	By using the Cauchy-Schwarz inequality, we have\begin{align*}
		\norm{x+y}^2 & = \norm{x}^2+\norm{y}^2+2\inner{x,y} \\
		& = \norm{x}^2+\norm{y}^2+2\inner{(C_1-1)^{-\frac{1}{2}}x,\sqrt{C_1-1}y}\\
		& \leqslant \frac{C_1}{C_1-1}\norm{x}^2+C_1\norm{y}^2,\quad\forall C_1>1.
	\end{align*}
	Let $C_2=\dfrac{C_1}{C_1-1}$, we obtain the desire result. Let $\tilde{T}(z_k)=F(z_k)+\tilde{G}_k$. Due to $L-$Lipschitz continuity of $F$, we have\begin{align*}
		\norm{F(z_{k+1})-F(z_{k+\frac{1}{2}})}^2\leqslant L^2s^2\norm{F(z_{k+1})-F(z_{k+\frac{1}{2}})-\tilde{T}(z_{k+1})+\tilde{T}(z_k)}^2.
	\end{align*}
	Let $y=F(z_{k+1})-F(z_{k+\frac{1}{2}})$ and $x=-\tilde{T}(z_{k+1})+\tilde{T}(z_k)$. Since $s<\dfrac{1}{L}$, there exists a constant $C_1$ such that $C_1>1$ and $L^2s^2C_1<1$. By applying \eqref{eq:errorestimation}, we have\[
	\norm{F(z_{k+1})-F(z_{k+\frac{1}{2}})}^2\leqslant L^2s^2 C_2\norm{\tilde{T}(z_{k+1})-\tilde{T}(z_k)}^2+L^2s^2C_1\norm{F(z_{k+1})-F(z_{k+\frac{1}{2}})}^2.
	\]
	Let $C=\dfrac{L^2s^2C_2}{1-L^2s^2C_1}$, we have\[
	\norm{F(z_{k+1})-F(z_{k+\frac{1}{2}})}^2\leqslant C\norm{\tilde{T}(z_{k+1})-\tilde{T}(z_k)}^2.
	\]
\end{proof}

With Lemma \ref{lem:error}, we can demonstrate the $o(1/k^2)$ convergence rate of  the SFBS method.
\begin{theorem}[Faster convergence rate]
	\label{thm:fastsfbsm}
	Let $\{z_{\frac{k}{2}}\}$, $\{\tilde{z}_{k+1}\}$ and $\{u_k\}$ be the sequences generated by \eqref{eq:sfbsma}-\eqref{eq:sfbsme}. If $F$ is $L-$Lipschitz continuous, $G$ is maximally monotone, $\rho-\dfrac{1}{2L}$, $T=F+G$ is $\rho-$comonotone, $\max\{0,-2\rho\}<s<\dfrac{1}{L}$,  and $0<D<(r-1)(s+2\rho)$, we have
	\begin{alignat*}{2}
		&\lim_{k\to\infty}\norm{z_k-u_k}^2=0,\qquad
		&&\lim_{k\to\infty}\norm{u_k-z^*}^2\text{ exists for all }z^*\in\zero(T),\\
		&\lim_{k\to\infty}k^2\dist(0,T(z_k))^2=0,\qquad
		&&\lim_{k\to\infty}k^2\norm{z_{k+1}-z_k}^2=0.
	\end{alignat*}
\end{theorem}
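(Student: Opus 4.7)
The plan is to sharpen the Lyapunov analysis of Theorem~\ref{thm:sfbsmrate} by exploiting two pieces of slack that did not enter the $O(1/k^{2})$ argument: the \emph{strict} gap $D<(r-1)(s+2\rho)$, and Lemma~\ref{lem:error}, whose error control is available precisely because $s<1/L$ is strict. I would reuse the Lyapunov function in \eqref{lya:seg+} with $1/L$ replaced by the step size $s$,
\[
\E(k)=\left[\tfrac{Dsk^{2}}{2}+\rho Dk(k-r)\right]\norm{\tilde{T}(z_{k})}^{2}+Drk\inner{\tilde{T}(z_{k}),z_{k}-u_{k}}+\tfrac{r^{3}-r^{2}}{2}\norm{u_{k}-z^{*}}^{2},
\]
where $\tilde{T}(z_{k})=F(z_{k})+\tilde{G}_{k}$. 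Retracing the expansion from the proof of Theorem~\ref{thm:sfbsmrate} and substituting $\norm{F(z_{k+1})-F(z_{k+\frac{1}{2}})}^{2}\le C\norm{\tilde{T}(z_{k+1})-\tilde{T}(z_{k})}^{2}$ from Lemma~\ref{lem:error} into the single term that previously prevented absorption, I expect to arrive at
\[
\E(k+1)-\E(k)\le -c_{1}(2k+r+1)\norm{\tilde{T}(z_{k+1})}^{2}-c_{2}k^{2}\norm{\tilde{T}(z_{k+1})-\tilde{T}(z_{k})}^{2}
\]
with $c_{1},c_{2}>0$ emerging from the strict gap. Together with the nonnegativity rewriting of $\E(k)$ used at the end of the proof of Theorem~\ref{thm:sfbsmrate}, this gives $\E(k)\to\E_{\infty}\ge 0$, boundedness of $\{u_{k}\}$, and two summable series $\sum k\norm{\tilde{T}(z_{k+1})}^{2}<\infty$ and $\sum k^{2}\norm{\tilde{T}(z_{k+1})-\tilde{T}(z_{k})}^{2}<\infty$.

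To upgrade from $O(1/k^{2})$ to $o(1/k^{2})$, I show that the sequence $\phi_{k}:=k^{2}\norm{\tilde{T}(z_{k})}^{2}$ has bounded variation. Splitting
\[
\phi_{k+1}-\phi_{k}=(2k+1)\norm{\tilde{T}(z_{k+1})}^{2}+k^{2}\bigl(\norm{\tilde{T}(z_{k+1})}^{2}-\norm{\tilde{T}(z_{k})}^{2}\bigr),
\]
bounding the second piece via $\bigl|\,\|a\|^{2}-\|b\|^{2}\bigr|\le\|a-b\|(\|a\|+\|b\|)$, and applying Cauchy--Schwarz against the two summable series above, one obtains $\sum_{k}|\phi_{k+1}-\phi_{k}|<\infty$. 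Hence $\phi_{k}\to\phi_{\infty}$, and since $\sum_{k\ge 1}\phi_{k}/k=\sum k\norm{\tilde{T}(z_{k})}^{2}<\infty$, necessarily $\phi_{\infty}=0$, i.e., $k^{2}\dist(0,T(z_{k}))^{2}\to 0$.

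For the position iterates, the identity $\tilde{z}_{k+1}-u_{k}=\tfrac{k}{k+r}(z_{k}-u_{k})$ combined with the update rules yields
\[
u_{k+1}-z_{k+1}=\tfrac{k}{k+r}(u_{k}-z_{k})+\varepsilon_{k},\qquad \varepsilon_{k}=(u_{k+1}-u_{k})+(\tilde{z}_{k+1}-z_{k+1}).
\]
Using $\norm{u_{k+1}-u_{k}}=\tfrac{D}{r}\norm{\tilde{T}(z_{k+1})}$, the bound $\norm{\tilde{z}_{k+1}-z_{k+1}}\le s\norm{F(z_{k+\frac{1}{2}})+\tilde{G}_{k+1}}+\tfrac{2|\rho|k}{k+r}\norm{\tilde{T}(z_{k})}$, Lemma~\ref{lem:error}, and $k\norm{\tilde{T}(z_{k})}\to 0$ from the previous paragraph, one sees $k\norm{\varepsilon_{k}}\to 0$. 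Iterating the recursion and estimating $\prod_{j=k_{0}}^{k-1}j/(j+r)=\Theta(k^{-r})$ by Stirling (summable against the $o(1/j)$ perturbation because $r>1$), one concludes $\norm{u_{k}-z_{k}}\to 0$. Substituting $k^{2}\norm{\tilde{T}(z_{k})}^{2}\to 0$ and $\norm{z_{k}-u_{k}}\to 0$ into the definition of $\E(k)$ kills its quadratic and cross terms, so $\norm{u_{k}-z^{*}}^{2}\to 2\E_{\infty}/(r^{3}-r^{2})$ and the limit exists.

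Finally, the analogue of \eqref{eq:usefulthm1_a} with step size $s$ reads
\[
k(z_{k+1}-z_{k})=\tfrac{rk}{k+r}(u_{k}-z_{k})-sk[F(z_{k+\frac{1}{2}})+\tilde{G}_{k+1}]-\tfrac{2\rho k^{2}}{k+r}\tilde{T}(z_{k}),
\]
and each term vanishes: the first by $\norm{u_{k}-z_{k}}\to 0$, the third by $k\norm{\tilde{T}(z_{k})}\to 0$, and the middle one since $k\norm{F(z_{k+\frac{1}{2}})+\tilde{G}_{k+1}}\le k\norm{\tilde{T}(z_{k+1})}+k\sqrt{C}\,\norm{\tilde{T}(z_{k+1})-\tilde{T}(z_{k})}\to 0$ via Lemma~\ref{lem:error}. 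Thus $k^{2}\norm{z_{k+1}-z_{k}}^{2}\to 0$. The hard step is the first paragraph --- threading Lemma~\ref{lem:error} through the long bookkeeping of $\E(k+1)-\E(k)$ so that both $c_{1},c_{2}>0$ fall out --- while the other delicate piece is the discrete Grönwall estimate in the third paragraph, where the $o(1/j)$ perturbation must be weighed against the polynomial decay $\Theta(k^{-r})$; the assumption $r>1$ is precisely what keeps the perturbation subordinate to the contraction.
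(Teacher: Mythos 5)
Your overall architecture inverts the paper's. You try to establish $k^{2}\norm{\tilde{T}(z_{k})}^{2}\to 0$ first, via a bounded-variation argument, and then derive $\norm{z_{k}-u_{k}}\to 0$, the existence of $\lim\norm{u_{k}-z^{*}}^{2}$, and $k^{2}\norm{z_{k+1}-z_{k}}^{2}\to0$ from it. The paper goes the other way: it first proves $\norm{z_{k}-u_{k}}\to 0$ by introducing a second, auxiliary Lyapunov function $\G(k)=\frac{1}{2}\norm{z_{k}-u_{k}}^{2}+\frac{r}{2D}\cdot\frac{(k+r)^{2}}{k^{2}}\left(s+2\rho-\frac{D}{r}\right)\norm{u_{k}-z^{*}}^{2}$, showing $\G(k+1)-\G(k)\leqslant-\frac{(2k+r)(r-1)}{2k^{2}}\norm{z_{k+1}-u_{k+1}}^{2}+\varphi(k)$ with $\sum_{k}\varphi(k)<+\infty$, hence $\sum_{k}\norm{z_{k}-u_{k}}^{2}/k<+\infty$; it separately proves that $\lim\norm{u_{k}-z^{*}}^{2}$ exists by estimating $\frac12\norm{u_{k+1}-z^{*}}^{2}-\frac12\norm{u_{k}-z^{*}}^{2}$ against summable quantities; and only then deduces that the remaining block $\left[\frac{Dsk^{2}}{2}+\rho Dk(k-r)\right]\norm{\tilde{T}(z_{k})}^{2}$ of $\E(k)$ must converge, with $\sum_{k}k\norm{\tilde{T}(z_{k})}^{2}<+\infty$ forcing its limit to be zero. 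Your use of Lemma \ref{lem:error} and of the identity for $k(z_{k+1}-z_{k})$ at the end does match the paper.

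The gap is in your bounded-variation step, and it is quantitative, not cosmetic. Set $\Delta_{k}=\tilde{T}(z_{k+1})-\tilde{T}(z_{k})$. Even granting your unverified claim that the Lyapunov decrement carries an extra term $-c_{2}k^{2}\norm{\Delta_{k}}^{2}$ --- which does not fall out of the paper's bookkeeping, since the comonotonicity term $-Dk(k+r)\inner{\Delta_{k},z_{k+1}-z_{k}}$ is consumed exactly by the telescoping of the $\rho Dk(k-r)$ coefficient and leaves no $\norm{\Delta_{k}}^{2}$ surplus, so the surplus would have to be extracted from the strict inequality $Ls<1$ in the Lipschitz step, a computation you do not carry out --- the two series you would then hold, $\sum_{k}k\norm{\tilde{T}(z_{k})}^{2}<+\infty$ and $\sum_{k}k^{2}\norm{\Delta_{k}}^{2}<+\infty$, do not control $\sum_{k}k^{2}\bigl|\norm{\tilde{T}(z_{k+1})}^{2}-\norm{\tilde{T}(z_{k})}^{2}\bigr|$. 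Bounding this by $\sum_{k}k^{2}\norm{\Delta_{k}}\bigl(\norm{\tilde{T}(z_{k+1})}+\norm{\tilde{T}(z_{k})}\bigr)$ and using $\norm{\tilde{T}(z_{k})}=O(1/k)$ reduces it to $\sum_{k}k\norm{\Delta_{k}}$, and Cauchy--Schwarz against $\sum_{k}k^{2}\norm{\Delta_{k}}^{2}<+\infty$ leaves the divergent factor $\sum_{k}1$; you would need $\sum_{k}k^{3}\norm{\Delta_{k}}^{2}<+\infty$ or $\sum_{k}k^{2}\norm{\tilde{T}(z_{k})}^{2}<+\infty$, neither of which is available. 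Since $k\norm{\tilde{T}(z_{k})}\to 0$ is the input to every later step of your argument (it is what makes $\norm{\varepsilon_{k}}=o(1/k)$ in your contraction recursion for $u_{k}-z_{k}$, and what kills the cross term of $\E(k)$), the whole chain is blocked. The repair is essentially the paper's route: prove convergence of the other two blocks of $\E(k)$ first (via the auxiliary function $\G(k)$ and the separate monotonicity-type estimate for $\norm{u_{k}-z^{*}}^{2}$), conclude that $k^{2}\norm{\tilde{T}(z_{k})}^{2}$ converges because $\E(k)$ does, and then use $\sum_{k}k\norm{\tilde{T}(z_{k})}^{2}<+\infty$ to identify the limit as zero.
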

\begin{proof}
	Let $\tilde{T}(z_k)=F(z_k)+\tilde{G}_k$. Owing to $s<\dfrac{1}{L}$, the operator $F$ can be regarded as $s^{-1}-$Lipschitz continuous. By Theorem \ref{thm:sfbsmrate}, we have
	\[\norm{\tilde{T}(z_k)}^2 \leqslant O\left(\frac{1}{k^2}\right),\quad\sum_{k=0}^{\infty}k\norm{\tilde{T}(z_k)}^2 <+\infty.
	\]
	Due to Lemma \ref{lem:error}, we have\[
	\norm{F(z_{k+1})-F(z_{k+\frac{1}{2}})}^2\leqslant O\left(\frac{1}{k^2}\right),\quad\sum_{k=0}^\infty k\norm{F(z_{k+1})-F(z_{k+\frac{1}{2}})}^2<+\infty.
	\]
	Next, we show that \[
	\sum_{k=1}^\infty\frac{\norm{z_{k+1}-u_{k+1}}^2}{2k}<+\infty.
	\]
	To obtain such result, we need to study the following auxiliary Lyapunov function:\[
	\G(k)=\frac{1}{2}\norm{z_k-u_k}^2+\frac{r}{2D}\cdot\frac{(k+r)^2}{k^2}\left(s+2\rho-\frac{D}{r}\right)\norm{u_k-z^*}^2.
	\]
	First, consider the difference $\dfrac{1}{2}\norm{z_{k+1}-u_{k+1}}^2-\dfrac{1}{2}\norm{z_k-u_k}^2$.
	\begin{align*}
		&\ \frac{1}{2}\norm{z_{k+1}-u_{k+1}}^2-\frac{1}{2}\norm{z_k-u_k}^2 \\
		= &\ \frac{1}{2}\inner{z_{k+1}+z_k-u_{k+1}-u_k,z_{k+1}-z_k-u_{k+1}+u_k}\\
		= &\ \frac{1}{2}\left\langle\left(2+\frac{r}{k}\right)z_{k+1}-u_{k+1}-\left(1+\frac{r}{k}\right)u_k+\frac{k+r}{k}s[F(z_{k+\frac{1}{2}})+\tilde{G}_{k+1}]+2\rho \tilde{T}(z_k),\right.\\
		&\qquad\  \left. -\frac{r}{k}z_{k+1}+\left(1+\frac{r}{k}\right)u_k-u_{k+1}-\frac{k+r}{k}s[F(z_{k+\frac{1}{2}})+\tilde{G}_{k+1}]-2\rho \tilde{T}(z_k)\right\rangle\\
		= &\ -\frac{(2k+r)r}{2k^2}\norm{z_{k+1}-u_{k+1}}^2-\frac{(k+r)^2}{k^2}s\inner{F(z_{k+1})-F(z_{k+\frac{1}{2}}), z_{k+1}-u_{k+1}}\\
		&\ -\frac{k+r}{k}2\rho s\inner{F(z_{k+\frac{1}{2}})-F(z_{k+1}), \tilde{T}(z_{k})-\frac{k+r}{k}\tilde{T}(z_{k+1})}\\
		&\ -\frac{(k+r)^2}{k^2}\left(s+2\rho-\frac{D}{r}\right)\inner{\tilde{T}(z_{k+1}),z_{k+1}-u_{k+1}}\\
		&\ -\frac{k+r}{k}\left(s+2\rho-\frac{D}{r}\right)2\rho s\inner{\tilde{T}(z_k)-\frac{k+r}{k}\tilde{T}(z_{k+1}),\tilde{T}(z_{k+1})}\\
		\leqslant &\  -\frac{(2k+r)(r-1)}{2k^2}\norm{z_{k+1}-u_{k+1}}^2+\frac{(k+r)^4s^2}{2k^2(2k+r)}\norm{F(z_{k+1})-F(z_{k+\frac{1}{2}})}^2\\
		&\ -\frac{(k+r)^2}{k^2}\left(s+2\rho-\frac{D}{r}\right)\inner{\tilde{T}(z_{k+1}),z_{k+1}-u_{k+1}}\\
		&\ +\frac{k+r}{k}\rho s\left(\norm{F(z_{k+\frac{1}{2}})-F(z_{k+1})}^2+\norm{\tilde{T}(z_{k})-\frac{k+r}{k}\tilde{T}(z_{k+1})}^2\right)\\
		&\ +\frac{k+r}{k}\left(s+2\rho-\frac{D}{r}\right)\rho s\left(\norm{\tilde{T}(z_{k})-\frac{k+r}{k}\tilde{T}(z_{k+1})}^2+\norm{\tilde{T}(z_{k+1})}^2\right).
	\end{align*}
	Also, since $\dfrac{(k+r)^2}{k^2}$ is non-increasing, we have\begin{align*}
		&\  \frac{r}{2D}\cdot\frac{(k+1+r)^2}{(k+1)^2}\left(s+2\rho-\frac{D}{r}\right)\norm{u_{k+1}-z^*}^2-\frac{r}{2D}\cdot\frac{(k+r)^2}{k^2}\left(s+2\rho-\frac{D}{r}\right)\norm{u_k-z^*}^2 \\
		\leqslant &\  \frac{r}{2D}\cdot\frac{(k+r)^2}{k^2}\left(s+2\rho-\frac{D}{r}\right)\left(\norm{u_{k+1}-z^*}^2-\norm{u_k-z^*}^2\right)\\
		=&\ \frac{r}{2D}\cdot\frac{(k+r)^2}{k^2}\left(s+2\rho-\frac{D}{r}\right)\left(2\inner{u_{k+1}-u_k, u_{k+1}-z^*}-\norm{u_{k+1}-u_k}^2\right)\\
		=&\ \frac{r}{2D}\cdot\frac{(k+r)^2}{k^2}\left(s+2\rho-\frac{D}{r}\right)\left(-\frac{2D}{r}\inner{\tilde{T}(z_{k+1}), u_{k+1}-z^*}-\frac{D^2}{r^2}\norm{\tilde{T}(z_{k+1})}^2\right).
	\end{align*}Let
	\begin{align*}
		\varphi(k)=&\ \frac{(k+r)^4s^2}{2k^2(2k+r)}\norm{F(z_{k+1})-F(z_{k+\frac{1}{2}})}^2\\
		&\ -\frac{(k+r)^2}{k^2}\left(s+2\rho-\frac{D}{r}\right)\left(\rho+\frac{D}{2r}\right)\norm{\tilde{T}(z_{k+1})}^2\\
		&\ +\frac{k+r}{k}\rho s\left(\norm{F(z_{k+\frac{1}{2}})-F(z_{k+1})}^2+\norm{\tilde{T}(z_{k})-\frac{k+r}{k}\tilde{T}(z_{k+1})}^2\right)\\
		&\ +\frac{k+r}{k}\left(s+2\rho-\frac{D}{r}\right)\rho s\left(\norm{\tilde{T}(z_{k})-\frac{k+r}{k}\tilde{T}(z_{k+1})}^2+\norm{\tilde{T}(z_{k+1})}^2\right).
	\end{align*}
	By previous argument, we have\[
	\G(k+1)-\G(k)\leqslant-\frac{(2k+r)(r-1)}{2k^2}\norm{z_{k+1}-u_{k+1}}^2+\varphi(k).
	\]
	Since $\sum\limits_{k=1}^{\infty}\varphi(k)<+\infty$, we have\[
	\G(k+1)+\sum_{i=k+1}^\infty\varphi(i)\leqslant\G(k)+\sum_{i=k}^\infty\varphi(k)-\frac{(2k+r)(r-1)}{2k^2}\norm{z_{k+1}-u_{k+1}}^2.
	\]
	Then we have\[
	\lim_{k\to\infty}\G(k)\text{ exists, }\quad \sum_{k=1}^\infty\frac{\norm{z_k-u_k}^2}{k}\leqslant+\infty.
	\]
	
	The next thing we need to do is to show that for all $z^*\in\zero(T)$, $\lim\limits_{k\to\infty}\norm{u_k-z^*}^2$ exists. If  $\lim\limits_{k\to\infty}\norm{u_k-z^*}^2$  exists, $\lim\limits_{k\to\infty}\G(k)$ exists and $\sum\limits_{k=1}^{\infty}\dfrac{\norm{z_{k+1}-u_{k+1}}^2}{2k}<+\infty$, we can easily show that\[
	\lim_{k\to\infty}\norm{z_k-u_k}^2=0.
	\]
	Calculating the difference of $\dfrac{1}{2}\norm{u_k-z^*}^2$, we have\begin{align*}
		&\ \frac{1}{2}\norm{u_{k+1}-z^*}^2-\frac{1}{2}\norm{u_k-z^*}^2 \\
		\leqslant&\ -\frac{D}{r}\inner{\tilde{T}(z_{k+1}),u_{k+1}-z^*}\\
		\leqslant&\ \frac{D}{2r} \left(k\norm{\tilde{T}(z_{k+1})}^2+k^{-1}\norm{u_{k+1}-z_{k+1}}^2\right)-\frac{D\rho}{r}\norm{\tilde{T}(z_{k+1})}^2.
	\end{align*}
	Since \[
	\sum_{k=1}^\infty\frac{D}{2r} \left(k\norm{T_{k+1}}^2+k^{-1}\norm{u_{k+1}-z_{k+1}}^2\right)-\frac{D\rho}{r}\norm{T_{k+1}}^2<+\infty,
	\]
	we prove the existence of the limitation $\lim\limits_{k\to\infty}\norm{u_k-z^*}^2$. 
	
	Finally, we can prove the results presented in Theorem \ref{thm:sfbsmrate}. Since \[
	\norm{\tilde{T}(z_{k})}^2\leqslant O\left(\frac{1}{k^2}\right),\quad\lim_{k\to\infty}\norm{z_k-u_k}^2=0,
	\]
	we have\[
	\lim_{k\to\infty}Drk\inner{\tilde{T}(z_k),z_k-u_k}=0.
	\]
	Consider the following Lyapunov function
	\begin{align*}
		\E(k)=&\ \left[\frac{Dk^2s}{2}+\rho Dk(k-r)\right]\norm{\tilde{T}(z_k)}^2\\
		&\ +Drk\inner{\tilde{T}(z_k),z_k-u_k}+\frac{r^3-r^2}{2}\norm{u_k-z^*}. 
	\end{align*}
	By Theorem \ref{thm:fastsfbsm}, we have $\{\E(k)\}$ is non-increasing and non-negetive. Thus, $\lim\limits_{k\to\infty}\E(k)$ exists. Also, by the existence of  $\lim\limits_{k\to\infty}\norm{u_k-z^*}^2$, we have\[
	\lim_{k\to\infty}\left[\frac{Dk^2s}{2}+\rho Dk(k-r)\right]\norm{\tilde{T}(z_k)}^2\text{ exists}.
	\]
	Since $\sum\limits_{k=0}^\infty k\norm{\tilde{T}(z_k)}^2<+\infty$, we have\[
	\lim_{k\to\infty}k^2\norm{\tilde{T}(z_k)}^2=0.
	\]
	By the inequality $0\leqslant\dist(0,T(z_k))^2\leqslant\norm{\tilde{T}(z_k)}^2$, we can obtain\[
	\lim_{k\to\infty}k^2\dist(0,T(z_k))^2=0.
	\]
	Also, by the inequality\[
	k(z_{k+1}-z_k)=r(u_{k+1}-z_{k+1})-[(k+r)s-D]\tilde{T}(z_{k+1})-(k+r)s[F(z_{k+\frac{1}{2}})-F(z_{k+1})]-2\rho k \tilde{T}(z_k),
	\]
	we have\[
	\lim_{k\to\infty}k^2\norm{z_{k+1}-z_k}^2=0.
	\]
\end{proof}

 Next, we discuss the weak convergence property.
\begin{lemma}
	\label{lem:maximallycomonotone}
	Let $F:\mathcal{H}\to\mathcal{H}$ be a $L$-Lipschitz continuous operator, $G$ be a maximally monotone operator, and $T=F+G$ be a  $\rho$-comonotone operator. If $\rho>-\dfrac{1}{2L}$, then $\frac{1}{2L}(F+G)$ is a maximally $2L\rho$-comonotone.
\end{lemma}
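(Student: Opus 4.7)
The plan splits into comonotonicity and maximality of $\frac{1}{2L}(F+G)$. The comonotonicity is an immediate rescaling: for any $c>0$, if $T$ is $\rho$-comonotone then $cT$ is $(\rho/c)$-comonotone, since for $\tilde u=cu\in cT(x)$ and $\tilde v=cv\in cT(y)$,
\[
\inner{\tilde u-\tilde v, x-y}=c\inner{u-v,x-y}\geqslant c\rho\norm{u-v}^2=(\rho/c)\norm{\tilde u-\tilde v}^2;
\]
taking $c=1/(2L)$ gives the claimed $2L\rho$-comonotonicity. The same rescaling also preserves maximality: any proper $2L\rho$-comonotone extension of $\frac{1}{2L}T$ would, upon multiplying by $2L$, yield a proper $\rho$-comonotone extension of $T$. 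So it suffices to prove that $T$ itself is maximally $\rho$-comonotone.

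For this I will use the Minty-type characterization (available in \cite{bauschke21}) that an operator $A$ is maximally $\alpha$-comonotone if and only if $I+\gamma A$ is surjective for some $\gamma>\max\{0,-\alpha\}$. The proof of this equivalence goes by observing that $A$ is [maximally] $\alpha$-comonotone iff $A^{-1}-\alpha I$ is [maximally] monotone, and then applying the classical Minty theorem to $A^{-1}-\alpha I$. The task thus reduces to producing $\gamma$ with $\max\{0,-\rho\}<\gamma<1/L$---a nonempty interval because $-\rho<\frac{1}{2L}<\frac{1}{L}$---for which $I+\gamma T$ is surjective.

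For any such $\gamma$, the Cauchy--Schwarz inequality applied to the Lipschitz bound on $F$ gives $\inner{F(x)-F(y),x-y}\geqslant -L\norm{x-y}^2$, so $I+\gamma F$ is Lipschitz, defined on all of $\mathcal{H}$, and $(1-\gamma L)$-strongly monotone; hence maximally monotone. Since $\gamma G$ is maximally monotone and $I+\gamma F$ is maximally monotone with full domain and continuous, the standard sum theorem for maximally monotone operators (see \cite{bauschke17}) yields that $I+\gamma T=(I+\gamma F)+\gamma G$ is maximally monotone. The strong monotonicity of $I+\gamma F$ is inherited, so $I+\gamma T$ is coercive and therefore surjective, and invoking the characterization finishes the proof.

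The main obstacle is correctly setting up the Minty-type characterization for $\alpha$-comonotone operators when $\alpha$ may be negative, since the familiar Minty statement is tailored to the monotone case; this is where the numerical threshold on $\rho$ enters, by ensuring the existence of a $\gamma$ that simultaneously exceeds $-\rho$ (required by the characterization) and stays strictly below $1/L$ (required for strong monotonicity of $I+\gamma F$). Once the characterization is in hand, the rest is a routine combination of the sum theorem with strong monotonicity, and no delicate estimates or additional regularity of $F$ or $G$ are needed.
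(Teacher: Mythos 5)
Your proof is correct and follows essentially the same route as the paper's: use the Lipschitz bound to make $I+\gamma F$ (the paper's $LI+F$, i.e.\ $\gamma=1/(2L)$ after rescaling) maximally monotone, combine with the sum theorem and a surjectivity argument to get $\mathrm{ran}(I+\gamma T)=\mathcal{H}$, and conclude maximal comonotonicity from a Minty-type criterion for $\rho$-comonotone operators (which the paper simply cites as Proposition 2.16 of \cite{bauschke21}, while you rederive it via the monotonicity of $T^{-1}-\rho I$). The only differences are that you work with a general $\gamma\in(\max\{0,-\rho\},1/L)$ instead of the fixed $\gamma=1/(2L)$ and make the scaling reduction explicit; both are sound.
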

\begin{proof}
	Because of $L$-Lipschitz continuity of $F$, we have
	\[
	\inner{F(x)-F(y),x-y}\geq-\norm{F(x)-F(y)}_2\norm{x-y}_2\geq-L\norm{x-y}_2^2,\quad\forall x,y\in\mathcal{H}.
	\]  
	Then $LI+F$ is a monotone operator. Also, by Corollary 20.28 in \cite{bauschke17}, $LI+F$ is a maximally monotone operator. Then according to Minty's surjective theorem and Theorem 25.2 in \cite{bauschke17}, we have  
	\[
	\mathrm{ran}\left(2LI+F+G\right)=\mathrm{ran}\left(LI+\left(LI+F+G\right)\right)=\mathcal{H}.
	\] 
	Because $\frac{1}{2L}(F+G)$ is $2L\rho$-comonotone and $2L\rho>-1$, then by Proposition 2.16 in \cite{bauschke21}, we have that $\frac{1}{2L}(F+G)$ is maximally $2L\rho$-comonotone.  
\end{proof}
\begin{lemma}  
	\label{lem:comonotoneconverge}  
	Let $T$ be a maximally $\rho$-comonotone operator. If there exists two sequences $\{z_k\}$ and $\{u_k\}$ such that $u_k\in T(z_k)$, $u_k\to u$ and $z_k\rightharpoonup z$, then$z\in T(x)$.  
\end{lemma}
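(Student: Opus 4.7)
The plan is to mimic the classical proof of Proposition \ref{prop:weakclosed} (the weak-strong closure of the graph of a maximally monotone operator), replacing monotonicity with $\rho$-comonotonicity. The statement to be proved should read $u\in T(z)$; I will prove that. The argument rests on the definition of \emph{maximal} $\rho$-comonotonicity: if $(z,u)$ is not already in the graph of $T$, then there must exist some $(x,v)\in\mathrm{gra}(T)$ violating the $\rho$-comonotone inequality with $(z,u)$. So it suffices to verify the inequality for every such $(x,v)$.

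First I would fix an arbitrary $(x,v)\in\mathrm{gra}(T)$ and apply $\rho$-comonotonicity to the pair $(z_k,u_k)$ and $(x,v)$ to obtain
\[
\inner{u_k-v,\,z_k-x}\geqslant\rho\norm{u_k-v}^2,\qquad\forall k.
\]
Next I would pass to the limit as $k\to\infty$. Since $u_k\to u$ strongly and $z_k\rightharpoonup z$ weakly, the inner product of a strongly convergent sequence and a weakly convergent one converges to the inner product of the limits, so the left-hand side tends to $\inner{u-v,z-x}$. The right-hand side tends to $\rho\norm{u-v}^2$ by strong convergence of $u_k$. Hence
\[
\inner{u-v,\,z-x}\geqslant\rho\norm{u-v}^2,\qquad\forall(x,v)\in\mathrm{gra}(T).
\]

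Finally I would invoke the maximality hypothesis: if $(z,u)\notin\mathrm{gra}(T)$, then the set $\mathrm{gra}(T)\cup\{(z,u)\}$ would still satisfy the $\rho$-comonotone inequality (the new pairings are precisely those controlled above, and the pairing of $(z,u)$ with itself gives $\rho\norm{0}^2=0$), properly extending the graph of $T$ while preserving $\rho$-comonotonicity, contradicting maximality. Therefore $(z,u)\in\mathrm{gra}(T)$, that is, $u\in T(z)$.

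There is no real obstacle here: the argument is a direct translation of the monotone proof, and the only point worth stressing is that the weak-strong pairing limit is what makes it work — if $u_k$ were only weakly convergent one could not pass to the limit in $\norm{u_k-v}^2$. The strong convergence of $u_k$ supplied by the hypothesis is exactly the ingredient needed, in complete analogy with Proposition \ref{prop:weakclosed}.
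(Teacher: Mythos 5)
Your proof is correct and follows essentially the same route as the paper's: apply the $\rho$-comonotone inequality to $(z_k,u_k)$ against an arbitrary graph point, pass to the limit using the strong--weak pairing, and conclude by maximality via the graph-extension argument. You also rightly note that the statement's conclusion should read $u\in T(z)$ (the paper's ``$z\in T(x)$'' is a typo), and your explicit justification of the limit passage is slightly more careful than the paper's.
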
  
Above lemma is a direct generalization of Proposition 20.38 in \cite{bauschke17}.
\begin{proof}  
	Let $(y,v)$ be any point such that $v\in T(y)$. Then we have  
	\[
	\inner{u_k-v,z_k-y}\geq\rho\norm{u_k-v}_2^2.
	\]  
	Let $k\to\infty$, we have
	\[
	\inner{u-v,z-y}\geq\rho\norm{u-v}_2^2,\quad\forall v\in T(x).
	\]  
	Because $T$ is a maximally $\rho$-comonotone operator, then $u\in T(x)$. If $u\notin T(z)$, then consider the following operator
	\[
	\overline{T}(y):=\begin{cases}  
		T(y),& y\neq z;\\  
		T(z)\cup\{u\},& y=z.  
	\end{cases}
	\]  
	Then it is easy to verify that $\overline{T}$ is $\rho$-comonotone and the graph of $T$ is a proper subset of the graph of $\tilde{T}$, which is contradict to the assumption that $T$ be a maximally $\rho$-comonotone operator.  
\end{proof}  

\begin{theorem}[Weak convergence property]  
	\label{thm:weakconvergence}  
	Let $\{z_{\frac{k}{2}}\}$, $\{\tilde{z}_{k+1}\}$ and $\{u_k\}$ be the sequences generated by \eqref{eq:sfbsma}-\eqref{eq:sfbsme}. If $F$ is $L-$Lipschitz continuous, $G$ is maximally monotone, $T=F+G$ is $\rho-$comonotone, $\rho>-\dfrac{1}{2L}$,  $\max\{0,-2\rho\}<s<\dfrac{1}{L}$,  and $0<D<(r-1)(s+2\rho)$, then both of the sequences $\{z_k\}$ and $\{u_k\}$ converge weakly to the same point in $\zero(T)$.  
\end{theorem}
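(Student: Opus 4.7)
The plan is to verify the two hypotheses of Opial's lemma (Proposition \ref{prop:discreteopial}) for the sequence $\{u_k\}$ with $C=\zero(T)$, and then transfer weak convergence to $\{z_k\}$ via the strong estimate $\norm{z_k-u_k}\to 0$ already supplied by Theorem \ref{thm:fastsfbsm}. Concretely, I need to check that (i) $\norm{u_k-z^*}$ converges for every $z^*\in\zero(T)$, and (ii) every weak sequential cluster point of $\{u_k\}$ lies in $\zero(T)$.

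Item (i) is precisely one of the conclusions of Theorem \ref{thm:fastsfbsm}, so nothing new is needed there. For item (ii), fix a weakly convergent subsequence $u_{k_j}\rightharpoonup\bar z$; since $z_{k_j}-u_{k_j}\to 0$ strongly, we also have $z_{k_j}\rightharpoonup\bar z$. To produce a certificate inside the graph of $T$, I observe that the resolvent step $z_{k+1}=J_{s^{-1}G}(\cdot)$ combined with the defining update for $\tilde G_{k+1}$ in \eqref{eq:sfbsme} gives $\tilde G_{k+1}\in G(z_{k+1})$ by construction. Hence the natural selection $v_k:=F(z_k)+\tilde G_k$ lies in $T(z_k)$, and Theorem \ref{thm:fastsfbsm}, via $k^2\norm{F(z_k)+\tilde G_k}^2\to 0$, yields $v_k\to 0$ in norm.

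To conclude $\bar z\in\zero(T)$, I invoke Lemma \ref{lem:maximallycomonotone}, whose hypothesis $\rho>-1/(2L)$ is part of the standing assumptions of the theorem, to obtain that $(1/(2L))(F+G)$ is maximally $2L\rho$-comonotone; a trivial graph-rescaling upgrades this to maximal $\rho$-comonotonicity of $T$ itself. Applying Lemma \ref{lem:comonotoneconverge} to the pair $(z_{k_j},v_{k_j})$ with $z_{k_j}\rightharpoonup\bar z$ and $v_{k_j}\to 0$ then yields $0\in T(\bar z)$. Opial's lemma now gives $u_k\rightharpoonup\bar z$, and because $\norm{z_k-u_k}\to 0$ the sequence $\{z_k\}$ converges weakly to the same limit.

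I expect the only substantive step to be the exhibition of the selection $v_k\in T(z_k)$ with $v_k\to 0$: this is what couples the $o(1/k^2)$ decay estimate of Theorem \ref{thm:fastsfbsm} to the graph of $T$ required by Lemma \ref{lem:comonotoneconverge}. The bookkeeping between $T$ and its rescaling $(1/(2L))(F+G)$ in Lemma \ref{lem:maximallycomonotone}, and the transfer of weak convergence from $\{u_k\}$ to $\{z_k\}$, are purely routine given the strong null estimate $\norm{z_k-u_k}\to 0$.
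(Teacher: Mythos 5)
Your proposal is correct and follows essentially the same route as the paper: Opial's lemma (Proposition \ref{prop:discreteopial}) applied to $\{u_k\}$ using the limit-existence and $\norm{z_k-u_k}\to 0$ conclusions of Theorem \ref{thm:fastsfbsm}, the selection $F(z_k)+\tilde G_k\in T(z_k)$ tending strongly to $0$, and Lemmas \ref{lem:maximallycomonotone} and \ref{lem:comonotoneconverge} to place every weak cluster point in $\zero(T)$. The only cosmetic difference is that the paper applies Lemma \ref{lem:comonotoneconverge} to the rescaled operator $\tfrac{1}{2L}(F+G)$ with the null sequence $\tfrac{1}{2L}\tilde T(z_{k_j})$, whereas you rescale the maximal comonotonicity statement back to $T$ first; the two are trivially equivalent.
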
  

\begin{proof}  
	According to Theorem \ref{thm:fastsfbsm} and Proposition \ref{prop:discreteopial}, we only need to prove that every weak cluster points of $\{u_k\}$ belong to $\zero(T)$. Let $u_\infty$ be any weak cluster point of $\{u_k\}$, and $\{u_{k_j}\}$ be the subsequence of $\{u_k\}$ such that$u_{k_j}\rightharpoonup u_\infty$.  
	
	Then by Theorem \ref{thm:fastsfbsm}, we have $z_{k_j}\rightharpoonup u_\infty$. Because $\tilde{T}(z_{k_j})\in T(z_{k_j})$ and $\tilde{T}(z_{k_j})\to 0$, we have $\frac{1}{2L}\tilde{T}(z_{k_j})\to0$. By Lemma \ref{lem:maximallycomonotone} and \ref{lem:comonotoneconverge}, we have $0\in T(u_\infty)$, i. e. $u_\infty\in\zero(T)$.  
\end{proof}  

The method to demonstrate the $o(1/k^2)$ convergence rate and the weak convergence property of the SPEG+ method is similar to the proof for Theorem \ref{thm:fastsfbsm} and Theorem \ref{thm:weakconvergence}.
\begin{corollary}
	\label{coro:fastpseg+}
	Let $\{z_{\frac{k}{2}}\}$, $\{\tilde{z}_{k+1}\}$ and $\{u_k\}$ be the sequences generated by\begin{align*}
		\tilde{z}_{k+1} &= \dfrac{k}{k+r}z_k+\dfrac{r}{k+r}u_k,\\
		z_{k+\frac{1}{2}} &= P_C\left(\tilde{z}_{k+1}-\dfrac{ks}{(k+r)}F(z_k)\right),\\
		z_{k+1} &= P_C\left(\tilde{z}_{k+1}-sF(z_{k+\frac{1}{2}})\right),\\
		\tilde{C}_{k+1} & = s^{-1}\left(\tilde{z}_{k+1}-sF(z_{k+\frac{1}{2}})-z_{k+1}\right),\\
		u_{k+1} &= u_k-\dfrac{D}{r}[F(z_{k+1})+\tilde{C}_{k+1}].
	\end{align*} If $F$ is $L-$Lipschitz continuous and monotone, $0<s<\dfrac{1}{L}$, $r>1$ and $0<D<(r-1)s$, then we have\begin{align*}
		\lim_{k\to\infty}\norm{z_k-u_k}^2&=0,\\
		\lim_{k\to\infty}\norm{u_k-z^*}^2&\text{ exists for all }z^*\in\zero(T),\\
		\lim_{k\to\infty}k^2\dist(0,T(z_k))^2&=0,\\
		\lim_{k\to\infty}k\inner{F(z_k)+\tilde{C}_k, z_k-z^*}&=0,\quad\forall z^*\in\zero(T),\\
		\lim_{k\to\infty}k^2\norm{z_{k+1}-z_k}^2&=0.
	\end{align*}
	Moreover, both of the sequences $\{z_k\}$ and $\{u_k\}$ converge weakly to a point in $\zero(T)$.
\end{corollary}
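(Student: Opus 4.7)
The plan is to mirror the proofs of Theorems \ref{thm:fastsfbsm} and \ref{thm:weakconvergence} almost verbatim under the correspondence $\tilde{G}_k\leftrightarrow\tilde{C}_k$, $G\leftrightarrow N_C$, $\rho\leftrightarrow 0$. Theorem \ref{thm:spegrate} (with its step $1/L$ replaced by a general $s<1/L$) already supplies the cornerstone facts $\norm{F(z_k)+\tilde{C}_k}^{2}=O(1/k^{2})$ and $\sum_{k}(2k+r+1)\norm{F(z_{k+1})+\tilde{C}_{k+1}}^{2}<+\infty$, so the task is to upgrade $O$ to $o$. As a preliminary I would establish the analogue of Lemma \ref{lem:error}: subtracting the two projection identities that define $z_{k+1}$ and $z_{k+\frac{1}{2}}$ and using the normal-cone inequality $\inner{\tilde{C}_{k+\frac{1}{2}},z_{k+\frac{1}{2}}-z_{k+1}}\geqslant 0$ to absorb the $\tilde{C}_{k+\frac{1}{2}}$ piece when squaring (exactly as in the derivation of \eqref{eq:usefulthm3_c}), combined with $Ls<1$ and the splitting $\norm{x+y}^{2}\leqslant C_{2}\norm{x}^{2}+C_{1}\norm{y}^{2}$, yields $\norm{F(z_{k+1})-F(z_{k+\frac{1}{2}})}^{2}\leqslant C\norm{F(z_{k+1})+\tilde{C}_{k+1}-F(z_{k})-\tilde{C}_{k}}^{2}$ for some $C>0$, and hence $\sum_{k}k\norm{F(z_{k+1})-F(z_{k+\frac{1}{2}})}^{2}<+\infty$.

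The next step is to introduce the auxiliary Lyapunov function
\[
\G(k)=\frac{1}{2}\norm{z_k-u_k}^{2}+\frac{r}{2D}\cdot\frac{(k+r)^{2}}{k^{2}}\left(s-\frac{D}{r}\right)\norm{u_k-z^{*}}^{2}.
\]
Expanding $\G(k+1)-\G(k)$ along the lines of Theorem \ref{thm:fastsfbsm} with $\rho=0$ produces a leading term $-\frac{(2k+r)(r-1)}{2k^{2}}\norm{z_{k+1}-u_{k+1}}^{2}$ plus (i) a summable remainder $\varphi(k)$, controlled by $\sum k\norm{F(z_{k+1})+\tilde{C}_{k+1}}^{2}<+\infty$ and the error estimate above, and (ii) two normal-cone cross terms $\inner{\tilde{C}_{k+1},z_{k}-z_{k+1}}\leqslant 0$ and $\inner{\tilde{C}_{k},z_{k+\frac{1}{2}}-z_{k}}\leqslant 0$, both of which have the correct sign. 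Consequently $\lim_{k}\G(k)$ exists and $\sum_{k}\norm{z_k-u_k}^{2}/k<+\infty$. To deduce $\lim_{k}\norm{z_k-u_k}^{2}=0$ I additionally need $\lim_{k}\norm{u_k-z^{*}}^{2}$ to exist for every $z^{*}\in\zero(T)$; this follows by expanding $\frac{1}{2}\norm{u_{k+1}-z^{*}}^{2}-\frac{1}{2}\norm{u_{k}-z^{*}}^{2}$ via $u_{k+1}-u_k=-\frac{D}{r}[F(z_{k+1})+\tilde{C}_{k+1}]$, using monotonicity of $F+N_C$ to discard $\inner{F(z_{k+1})+\tilde{C}_{k+1},z_{k+1}-z^{*}}\geqslant 0$, and splitting $\inner{F(z_{k+1})+\tilde{C}_{k+1},u_{k+1}-z_{k+1}}\leqslant\frac{k}{2}\norm{F(z_{k+1})+\tilde{C}_{k+1}}^{2}+\frac{1}{2k}\norm{u_{k+1}-z_{k+1}}^{2}$ by Cauchy--Schwarz, so that the positive part becomes summable.

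Feeding the existence of $\lim_k\norm{u_k-z^*}^{2}$ and $\norm{z_k-u_k}\to 0$ into the main Lyapunov function of Theorem \ref{thm:spegrate} (rescaled to step $s$), the cross-term $Drk\inner{F(z_k)+\tilde{C}_k,z_k-u_k}$ vanishes in the limit, forcing $\frac{Dk^{2}s}{2}\norm{F(z_k)+\tilde{C}_k}^{2}$ to converge; summability $\sum k\norm{F(z_k)+\tilde{C}_k}^{2}<+\infty$ then yields $\lim_k k^{2}\norm{F(z_k)+\tilde{C}_k}^{2}=0$. The bound $\dist(0,T(z_k))\leqslant\norm{F(z_k)+\tilde{C}_k}$ delivers the $o(1/k^{2})$ distance rate; the identity $k(z_{k+1}-z_k)=r(u_k-z_{k+1})-\frac{k+r}{L}[F(z_{k+\frac{1}{2}})+\tilde{C}_{k+1}]$ delivers $\lim_k k^{2}\norm{z_{k+1}-z_k}^{2}=0$; and the remaining statement $k\inner{F(z_k)+\tilde{C}_k,z_k-z^*}\to 0$ drops out of the same Lyapunov identity.

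Finally, weak convergence follows from Opial's lemma (Proposition \ref{prop:discreteopial}): $\lim_{k}\norm{u_k-z^{*}}$ exists for every $z^{*}\in\zero(T)$, so it suffices to show that every weak cluster point of $\{u_k\}$ lies in $\zero(T)$. If $u_{k_j}\rightharpoonup u_\infty$, then $z_{k_j}\rightharpoonup u_\infty$ since $z_k-u_k\to 0$; moreover $F(z_{k_j})+\tilde{C}_{k_j}\in(F+N_C)(z_{k_j})$ and $F(z_{k_j})+\tilde{C}_{k_j}\to 0$, so Proposition \ref{prop:weakclosed} applied to the maximally monotone operator $F+N_C$ yields $0\in(F+N_C)(u_\infty)$, and weak convergence of $\{z_k\}$ to the same limit follows from $z_k-u_k\to 0$. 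The main technical obstacle throughout is the correct accounting of the normal-cone cross terms in the $\G$-difference: without the sign inequalities $\inner{\tilde{C}_{k+1},z_{k}-z_{k+1}}\leqslant 0$ and $\inner{\tilde{C}_{k},z_{k+\frac{1}{2}}-z_{k}}\leqslant 0$, the expansion will not telescope to a summable-plus-nonpositive form, and this is precisely the step at which the proof departs most visibly from the forward-backward case handled in Theorem \ref{thm:fastsfbsm}.
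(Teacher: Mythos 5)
The overall architecture you propose (the auxiliary Lyapunov function $\G$, existence of $\lim_k\norm{u_k-z^*}$, then Opial's lemma together with demiclosedness of the maximally monotone operator $F+N_C$) coincides with what the paper does by reference back to Theorems \ref{thm:fastsfbsm} and \ref{thm:weakconvergence}. The gap is in the one step the paper itself flags as the main difficulty: the error estimate. Your proposed analogue of Lemma \ref{lem:error}, namely $\norm{F(z_{k+1})-F(z_{k+\frac{1}{2}})}^2\leqslant C\norm{F(z_{k+1})+\tilde{C}_{k+1}-F(z_k)-\tilde{C}_k}^2$, does not follow by the route you describe. If you use the sign inequality $\inner{\tilde{C}_{k+\frac{1}{2}},z_{k+\frac{1}{2}}-z_{k+1}}\geqslant 0$ to discard $\tilde{C}_{k+\frac{1}{2}}$ as in the derivation of \eqref{eq:usefulthm3_c}, you are left with $L^2s^2\norm{\frac{k}{k+r}F(z_k)-F(z_{k+\frac{1}{2}})-\tilde{C}_{k+1}}^2$, and after splitting off $y=F(z_{k+1})-F(z_{k+\frac{1}{2}})$ the remaining piece is $x=-[\tilde{T}(z_{k+1})-\tilde{T}(z_k)]-\tilde{C}_k-\frac{r}{k+r}F(z_k)$, where $\tilde{T}(z_k)=F(z_k)+\tilde{C}_k$. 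The leftover $\tilde{C}_k$ is not controlled by $\tilde{T}(z_{k+1})-\tilde{T}(z_k)$: in the constrained setting only the sum $F(z_k)+\tilde{C}_k$ tends to zero, while $\tilde{C}_k\to -F(z^*)$, which is in general nonzero. So neither your pointwise bound nor the needed summability $\sum_k k\norm{F(z_{k+1})-F(z_{k+\frac{1}{2}})}^2<+\infty$ is obtained this way.

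The paper closes this gap differently: it keeps $\tilde{C}_{k+\frac{1}{2}}$ (rather than discarding it by the sign inequality) and pairs it with $\frac{k}{k+r}\tilde{C}_k$, so that the residual decomposes into $\tilde{C}_{k+\frac{1}{2}}-\frac{k}{k+r}\tilde{C}_k$ plus $\tilde{T}(z_{k+1})-\frac{k}{k+r}\tilde{T}(z_k)$. The second is summable against $k$ by Theorem \ref{thm:spegrate}; the first is handled by the extra summability $\sum_k(k+r)^2\norm{\tilde{C}_{k+\frac{1}{2}}-\frac{k}{k+r}\tilde{C}_k}^2<+\infty$, which is precisely the additional negative term $-\frac{D}{2L}\norm{(k+r)\tilde{C}_{k+\frac{1}{2}}-k\tilde{C}_k}^2$ harvested from the Lyapunov-difference bound in the proof of Theorem \ref{thm:spegrate}. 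You cite the other two conclusions of that theorem but never this one, and without it your error estimate cannot be completed; the rest of your outline is sound once this is repaired.
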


\begin{proof}
	The main difficultly is to estimate the error term $\norm{F(z_{k+1})-F(z_{k+\frac{1}{2}})}$. If\[
	\sum_{k=0}^{\infty}k\norm{F(z_{k+1})-F(z_{k+\frac{1}{2}})}^2<+\infty
	\]
	is true, then by applying the proofing technique in the proof of Theorem \ref{thm:fastsfbsm}, we can obtain the results in Corollary \ref{coro:fastpseg+}. Since $F$ is $L-$Lipschitz continuous, we have
	\[\norm{F(z_{k+1})-F(z_{k+\frac{1}{2}})}^2 \leqslant L^2s^2\norm{F(z_{k+\frac{1}{2}})+\tilde{C}_{k+1}-\frac{k}{k+r}F(z_k)-\tilde{C}_{k+\frac{1}{2}}}^2.\]
	Since $s<\dfrac{1}{L}$, there exists a constant $C_1>1$ such that $L^2s^2C_1<1$. By \eqref{eq:errorestimation}, there exists a constant $C_2$ such that
	\begin{align*}
		&\ \norm{F(z_{k+1})-F(z_{k+\frac{1}{2}})}^2\\
		\leqslant&\  L^2s^2C_1\norm{F(z_{k+1})-F(z_{k+\frac{1}{2}})}^2\\ 
		&\ +L^2s^2C_2\norm{F(z_{k+1})+\tilde{C}_{k+1}-\frac{k}{k+r}F(z_k)-\tilde{C}_{k+\frac{1}{2}}}^2\\
		\leqslant&\ L^2s^2C_1\norm{F(z_{k+1})-F(z_{k+\frac{1}{2}})}^2+2L^2s^2C_2\norm{\frac{k}{k+r}\tilde{C}_k-\tilde{C}_{k+1}}^2\\
		&\ +2L^2s^2C_2\norm{F(z_{k+1})+\tilde{C}_{k+1}-\frac{k}{k+r}F(z_k)-\frac{k}{k+r}\tilde{C}_k}^2.
	\end{align*}
	The proof of Theorem \ref{thm:spegrate} shows that\[
	\sum_{k=0}^\infty(k+r)^2\norm{\frac{k}{k+r}\tilde{C}_k-\tilde{C}_{k+\frac{1}{2}}}^2<+\infty.
	\]
	In conclusion,\[
	\sum_{k=0}^{\infty}k\norm{F(z_{k+1})-F(z_{k+\frac{1}{2}})}^2<+\infty
	\]holds.
\end{proof}

\section{Line Search Framework}
\label{sec:5}
The theorems presented in Section \ref{sec:3} indicate that the SEG+ type method exhibits its maximum theoretical convergence speed when $D=(r-1)\left(\dfrac{1}{2L}+\rho\right)$. This prompts the question as to whether the SEG+ type method with $D=(r-1)\left(\dfrac{1}{2L}+\rho\right)$ indeed converges most rapidly in practical applications. We conduct a straightforward numerical experiment aimed at verifying this question. Here we consider the following min-max problem:\[
\min_x\max_y f(x,y)=-\frac{1}{6}x^2+\frac{2\sqrt{2}}{3}xy+\frac{1}{6}y^2.
\]
Such min-max problem is considered in section 4.4 in \cite{lee21}. The first-order criteria of above min-max problem is 
\begin{equation}
	0=F(x,y)=\begin{pmatrix}
		-\dfrac{1}{3}x+\dfrac{2\sqrt{2}}{3}y\\
		-\dfrac{2\sqrt{2}}{3}x-\dfrac{1}{3}y
	\end{pmatrix}.
	\label{eq:seg+linesearchoriginal}
\end{equation}
It is easily to verify that \begin{align*}
	\norm{F(x,y)-F(x',y')} & =\norm{(x,y)-(x',y')} ,\quad\forall (x,y),\  (x',y'), \\
	\inner{F(x,y)-F(x',y'),x-y} & =-\frac{1}{3}\norm{F(x,y)-F(x',y')}^2,\quad\forall (x,y),\  (x',y').
\end{align*}

We apply Algorithm \ref{al:sfbsm} with parameter $r = 2$ and different parameters $D$ to above zero-point problem until the termination condition is satisfied. The required number of iterations of Algorithm \ref{al:sfbsm} respect to $D$  is plotted in Figure \ref{fig:seg+d}.
\begin{figure}[ht]
	\centering
	\includegraphics[width=.6\textwidth]{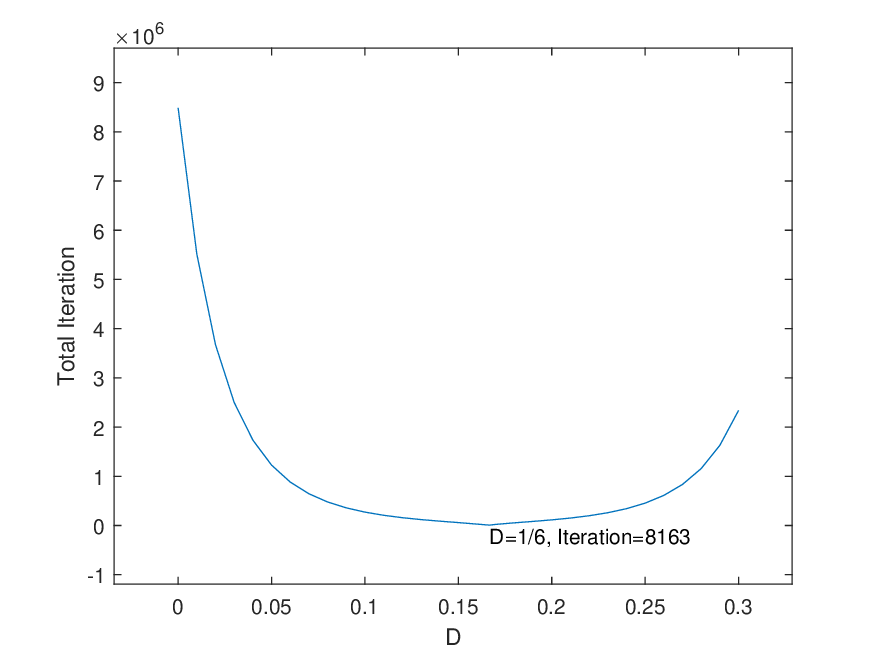}
	\caption{The required number of iterations Algorithm \ref{al:sfbsm} with $G=0$ respect to $D$. The termination condition is $\norm{F(x_k,y_k)}\leqslant10^{-6}$.}
	\label{fig:seg+d}
\end{figure}

As we can see from Figure \ref{fig:seg+d}, when $D=(r-1)\left(\dfrac{1}{2L}+\rho\right)$, i. e. $D=\dfrac{1}{6}$, the corresponding instance of Algorithm \ref{al:sfbsm} requires least number of iterations. However, the Lipschitz constant $L$  and the comonotone index $\rho$ of the operator $F$ described in \eqref{eq:seg+linesearchoriginal} can be deduced in advance. In practice, the Lipschitz constant of most of the operators $F$ and the comonotone index of most of the operators $T$ can not be determined. In addition, even though we can determine the Lipschitz constant and the comonotone index in advance, the operator $F$ may have smaller local Lipschitz constant or the operator $T$ may have larger local comonotone index as $z_k$ approaching $\zero(T)$. Algorithm \ref{al:sfbsm} can not catch these local information. To overcome these problems, we apply the line search framework to the SFBS method to estimate these local information. 

Since there is a term $\inner{F(z_k)+\tilde{G}_k, z_k-u_k}$ in the Lyapunov function, when we consider the difference of $\inner{F(z_k)+\tilde{G}_k, z_k-u_k}$, $\inner{F(z_{k+1})+\tilde{G}_{k+1}-F(z_k)-\tilde{G}_k, z_{k+1}-z_k}$ appears. This is why we need to assume comonotonicity for the operator $T=F+G$. Thus we propose the termination condition for $\rho_k$ is\[
\rho_k\leqslant\frac{\inner{F(z_{k+1})+\tilde{G}_{k+1}-F(z_k)+\tilde{G}_k, z_{k+1}-z_k}}{\norm{F(z_{k+1})+\tilde{G}_{k+1}-F(z_k)-\tilde{G}_k}^2}.
\]
Also, the idea that $F(z_{k+\frac{1}{2}})$ can be seen as an approximation of $F(z_{k+1})$ inspires us that the Lipschitz continuity on $F$ helps us give an estimation of the error $\norm{F(z_{k+\frac{1}{2}})-F(z_{k+1})}$. Because the parameter $L_k$ serves as an estimation of local Lipschitz constant, the termination condition for $L_k$ should be\[
L_k\geqslant\frac{\norm{F(z_{k+\frac{1}{2}})-F(z_{k+1})}}{\norm{z_{k+\frac{1}{2}}-z_{k+1}}}.
\]

By summarizing previous argument, we obtain Algorithm \ref{al:linesearchseg+}. The convergence results of Algorithm \ref{al:linesearchseg+} is given in the following theorems.
\begin{algorithm}[ht]
	\label{al:linesearchseg+}
	\caption{Line Search Framework for SEG+}
	\textbf{Input: } Operator $F$ and Operator $G$\;
	\textbf{Initialize: } $z_0, u_0=z_0$\;
	\For{$k=0, 1, \cdots$}{
		\Repeat{$\inner{F(z_{k+1})+\tilde{G}_{k+1}-F(z_k)-\tilde{G}_k,z_{k+1}-z_k}\geqslant\rho_k\norm{F(z_{k+1})+\tilde{G}_{k+1}-F(z_k)-\tilde{G}_k}^2$, $\norm{F(z_{k+1})-F(z_{k+\frac{1}{2}})}\leqslant L_k\norm{z_{k+1}-z_{k+\frac{1}{2}}}$}{
			Choosing $\alpha_k$, $L_k$, $\rho_k$ with $\rho_k>-\dfrac{1}{2L_k}$\;
			$\tilde{z}_{k+1}=\alpha_k u_k+(1-\alpha_k)z_k$\;
			$z_{k+\frac{1}{2}}=\tilde{z}_{k+1}-(1-\alpha_k)\left(\dfrac{1}{L_k}+2\rho_k\right)(F(z_k)+\tilde{G}_k)$\;
			$z_{k+1}=J_{L_k G}\left(\tilde{z}_{k+1}-\dfrac{1}{L_k}F(z_{k+\frac{1}{2}})-(1-\alpha_k)2\rho_k(F(z_k)+\tilde{G}_k)\right)$\;
			$\tilde{G}_{k+1}=L_k\left[\tilde{z}_{k+1}-z_{k+1}-\dfrac{1}{L_k}F(z_{k+\frac{1}{2}})-(1-\alpha_k)2\rho_k(F(z_k)+\tilde{G}_k)\right]$\;
		}
		$u_{k+1}=u_k-\dfrac{D}{r}\left(\dfrac{1}{2L_k}+\rho_k\right)(F(z_{k+1})+\tilde{G}_{k+1})$.
	}
\end{algorithm}

\begin{theorem}
	\label{thm:linesearchseg+}
	Let $\{z_{\frac{k}{2}}\}$, $\{\tilde{z}_{k+1}\}$ and $\{u_k\}$ be the sequences generated by Algorithm \ref{al:linesearchseg+}, and let $\{\E(k)\}$ be the Lyapunov function such that\begin{align*}
		\E(k) =&\ D\left[\left(\sum_{i=0}^{k-1}\frac{1}{2L_i}+\rho_i\right)^2-r\rho_{k-1}\left(\sum_{i=0}^{k-1}\frac{1}{2L_i}+\rho_i\right)\right]\norm{F(z_k)+\tilde{G}_k}^2\\
		&\ +Dr\left(\sum_{i=0}^{k-1}\frac{1}{2L_i}+\rho_i\right)\inner{F(z_k)+\tilde{G}_k,z_k-u_k}+\frac{r^3-r^2}{2}\norm{u_k-z^*}^2,
	\end{align*} 
	where $z^*\in\hh$.	If $r>1$, $0<D<2(r-1)$,\[
	\begin{gathered}
		\alpha_k= \frac{r\left(\cfrac{1}{2L_k}+\rho_k\right)}{\left(\sum_{i=0}^{k-1}\cfrac{1}{2L_i}+\rho_i\right)+r\left(\cfrac{1}{2L_k}+\rho_k\right)},\\
		\inner{F(z_{k+1})+\tilde{G}_{k+1}-F(z_k)-\tilde{G}_k,z_{k+1}-z_k} \geqslant\rho_k\norm{F(z_{k+1})+\tilde{G}_{k+1}-F(z_k)-\tilde{G}_k}^2,\\
		\norm{F(z_{k+1})-F(z_{k+\frac{1}{2}})}\leqslant L_k\norm{z_{k+1}-z_{k+\frac{1}{2}}},
	\end{gathered}
	\]
	$\rho_k>-\dfrac{1}{2L_k}$ and $\{\rho_k\}$ is non-decreasing, then the upper bound of the difference $\E(k+1)-\E(k)$ is
	\begin{align*}
		&\ \E(k+1)-\E(k)\\
		\leqslant&\ -\frac{D}{2}\left(\frac{1}{2L_k}+\rho_k\right)\left(r-1-\frac{D}{2}\right)\left[(r-1)\left(\frac{1}{2L_k}+\rho_k\right)+2\Sigma_{k+1}\right]\norm{F(z_{k+1})+\tilde{G}_{k+1}}^2\\
		&\ +D(r^2-r)\left(\frac{1}{2L_k}+\rho_k\right)\left(\rho_k\norm{\tilde{T}(z_{k+1})}^2-\inner{\tilde{T}(z_{k+1}),z_{k+1}-z^*}\right).
	\end{align*}
\end{theorem}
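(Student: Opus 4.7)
The plan is to follow the template of the proof of Theorem \ref{thm:sfbsmrate}, replacing the integer $k$ and the fixed parameters $\frac{1}{2L}$, $\rho$ by the cumulative quantity $\Sigma_k:=\sum_{i=0}^{k-1}\bigl(\frac{1}{2L_i}+\rho_i\bigr)$ and the local values $\frac{1}{2L_k}$, $\rho_k$. Writing $\sigma_k:=\frac{1}{2L_k}+\rho_k$ and $\tilde{T}(z_k):=F(z_k)+\tilde{G}_k$, the prescribed $\alpha_k=r\sigma_k/(\Sigma_k+r\sigma_k)$ recasts the first update as
\[
\tilde{z}_{k+1}=\frac{\Sigma_k}{\Sigma_k+r\sigma_k}z_k+\frac{r\sigma_k}{\Sigma_k+r\sigma_k}u_k,
\]
so $\Sigma_k$ plays the role of the integer $k$ and $r\sigma_k$ plays the role of $r$ in Algorithm \ref{al:sfbsm}. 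From the definition of $\tilde{G}_{k+1}$ we have $z_{k+1}=\tilde{z}_{k+1}-\frac{1}{L_k}[F(z_{k+\frac{1}{2}})+\tilde{G}_{k+1}]-(1-\alpha_k)2\rho_k\tilde{T}(z_k)$; eliminating $u_k$ and then $z_k$ via the formula for $\tilde{z}_{k+1}$ yields the adaptive analogues of \eqref{eq:usefulthm1_a}--\eqref{eq:usefulthm1_b}, while the line-search termination $\norm{F(z_{k+1})-F(z_{k+\frac{1}{2}})}\leqslant L_k\norm{z_{k+1}-z_{k+\frac{1}{2}}}$ gives the adaptive analogue of \eqref{eq:usefulthm1_c}.

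Next, I decompose $\E(k+1)-\E(k)=\text{I}+\text{II}+\text{III}$ exactly as in the SFBS proof, where I collects the $\norm{\tilde{T}}^2$-coefficient piece, II the inner-product piece, and III the $\norm{u_k-z^*}^2$ piece. Term II is split into $Dr\sigma_k\inner{\tilde{T}(z_{k+1}),z_{k+1}-u_{k+1}}$, $Dr\Sigma_k\inner{\tilde{T}(z_{k+1}),z_{k+1}-z_k-u_{k+1}+u_k}$, and $Dr\Sigma_k\inner{\tilde{T}(z_{k+1})-\tilde{T}(z_k),z_k-u_k}$; the adaptive identities just derived are then substituted, and the role played in Theorem \ref{thm:sfbsmrate} by the $\rho$-comonotonicity of $T$ is taken over by the line-search inequality $\inner{\tilde{T}(z_{k+1})-\tilde{T}(z_k),z_{k+1}-z_k}\geqslant\rho_k\norm{\tilde{T}(z_{k+1})-\tilde{T}(z_k)}^2$. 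Term III is handled by \eqref{eq:normdifference} together with $u_{k+1}-u_k=-\frac{D}{r}\sigma_k\tilde{T}(z_{k+1})$. When II and III are combined, the two $u_k$-containing terms merge into $-D(r^2-r)\sigma_k\inner{\tilde{T}(z_{k+1}),z_{k+1}-z^*}$, which produces the second summand in the claimed bound; all accumulated $\norm{\tilde{T}(z_{k+1})}^2$ coefficients collapse, after collecting, into the first summand $-\frac{D\sigma_k}{2}(r-1-\frac{D}{2})[(r-1)\sigma_k+2\Sigma_{k+1}]$.

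The main obstacle will be the telescoping of the first term of $\E(k)$: the difference $D[\Sigma_{k+1}^2-r\rho_k\Sigma_{k+1}]\norm{\tilde{T}(z_{k+1})}^2-D[\Sigma_k^2-r\rho_{k-1}\Sigma_k]\norm{\tilde{T}(z_k)}^2$ must be expanded using $\Sigma_{k+1}^2-\Sigma_k^2=(2\Sigma_k+\sigma_k)\sigma_k$, after which the residual $-Dr(\rho_k-\rho_{k-1})\Sigma_k\norm{\tilde{T}(z_k)}^2$ is absorbed as a non-positive contribution using the monotonicity assumption $\rho_k\geqslant\rho_{k-1}$. Sign tracking is delicate because $\rho_k$ is allowed to be negative (only $\rho_k>-1/(2L_k)$ is assumed), so Young-type splittings of cross terms such as $\inner{\tilde{T}(z_k),\tilde{T}(z_{k+1})}$ must be oriented carefully. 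Because the theorem intentionally leaves both $\rho_k\norm{\tilde{T}(z_{k+1})}^2$ and $\inner{\tilde{T}(z_{k+1}),z_{k+1}-z^*}$ unabsorbed in the final estimate, no global comonotonicity of $T$ is invoked, and the argument terminates at precisely this form.
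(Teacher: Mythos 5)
Your proposal is correct and follows essentially the same route as the paper's proof: the same reparametrization ($\Sigma_k$ in place of $k$, $\tfrac{1}{2L_k}+\rho_k$ in place of the constant step), the same adaptive analogues of the useful identities, the same I/II/III decomposition with the identical three-way split of II, the line-search inequalities standing in for Lipschitz continuity and comonotonicity, the merging of the two $u_k$-terms into $-D(r^2-r)\left(\tfrac{1}{2L_k}+\rho_k\right)\inner{\tilde{T}(z_{k+1}),z_{k+1}-z^*}$, and the use of the monotonicity of $\{\rho_k\}$ to absorb the $\rho_{k-1}$-versus-$\rho_k$ mismatch in the telescoped first term. The only step you leave implicit is the final add-and-subtract of $D(r^2-r)\rho_k\left(\tfrac{1}{2L_k}+\rho_k\right)\norm{\tilde{T}(z_{k+1})}^2$ that moves that quantity into the second summand of the stated bound, which is consistent with your closing remark that both terms are left unabsorbed.
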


\begin{proof}
	For the sake of simplicity, let $\Sigma_k=\sum\limits_{i=0}^{k-1}\dfrac{1}{2L_i}+\rho_i$ and $\tilde{T}(z_k)=F(z_k)+\tilde{G}_k$. First, we have
	\begin{align}
		\Sigma_k(z_{k+1}-z_k)  = &\  r\left(\frac{1}{2L_k}+\rho_k\right)(u_k-z_{k+1})-2\rho_k\Sigma_k \tilde{T}(z_k)\notag\\
		&\ -\frac{1}{L_k}\cdot\left[r\left(\frac{1}{2L_k}+\rho_k\right)+\Sigma_k\right](F(z_{k+\frac{1}{2}})+\tilde{G}_{k+1}),\label{proof:linesearchseg+a}\\	 	
		r\left(\frac{1}{2L_k}+\rho_k\right)(z_k-u_k)= &\ -\left[r\left(\frac{1}{2L_k}+\rho_k\right)+\Sigma_k\right](z_{k+1}-z_k)-2\rho_k\Sigma_k(F(z_k)+\tilde{G}_k)\notag\\
		&\ -\frac{1}{L_k}\left[r\left(\frac{1}{2L_k}+\rho_k\right)+\Sigma_k\right](F(z_{k+\frac{1}{2}})+\tilde{G}_{k+1}).\label{proof:linesearchseg+b}
	\end{align}

	Next, we divide the difference $\E(k+1)-\E(k)$ into three parts.\begin{align*}
		\E(k+1)-\E(k)= &\ \underbrace{D(\Sigma_{k+1}^2-r\rho_k\Sigma_{k+1})\norm{\tilde{T}(z_{k+1})}^2-D(\Sigma_k^2-r\rho_{k-1}\Sigma_k)\norm{\tilde{T}(z_k)}^2}_{\text{I}} \\
		&\ +\underbrace{Dr\Sigma_{k+1}\inner{\tilde{T}(z_{k+1}),z_{k+1}-u_{k+1}}-Dr\Sigma_k\inner{\tilde{T}(z_k),z_k-u_k}}_{\text{II}}\\
		&\ +\underbrace{\frac{r^3-r^2}{2}\left[\norm{u_{k+1}-z^*}^2-\norm{u_k-z^*}^2\right]}_{\text{III}}.
	\end{align*}
	The upper bound of II and III can be estimated as follows. First, we consider II. We split II into three parts as follows:\begin{align*}
		\text{II}= &\ Dr\left(\frac{1}{2L_k}+\rho_k\right)\inner{\tilde{T}(z_{k+1}),z_{k+1}-u_{k+1}}+Dr\Sigma_k\inner{\tilde{T}(z_{k+1}),z_{k+1}-z_k-u_{k+1}+u_k} \\
		&\ +Dr\Sigma_k\inner{\tilde{T}(z_{k+1})-\tilde{T}(z_k),z_k-u_k}.
	\end{align*}
	By \eqref{proof:linesearchseg+a}, \eqref{proof:linesearchseg+b} and $u_{k+1}=u_k-\dfrac{D}{r}\left(\dfrac{1}{2L_k}+\rho_k\right)\tilde{T}(z_{k+1})$, we have\begin{align*}
		\text{II}= &\ Dr\left(\frac{1}{2L_k}+\rho_k\right)\inner{\tilde{T}(z_{k+1}),z_{k+1}-u_k}+D^2\left(\frac{1}{2L_k}+\rho_k\right)\Sigma_{k+1}\norm{\tilde{T}(z_{k+1})}^2 \\
		&\ +Dr\inner{\tilde{T}(z_{k+1}),r\left(\frac{1}{2L_k}+\rho_k\right)(u_k-z_{k+1})-2\rho_k\Sigma_k \tilde{T}(z_k)} \\
		&\ -Dr\inner{\tilde{T}(z_{k+1}),\frac{1}{L_k}\left[r\left(\frac{1}{2L_k}+\rho_k\right)+\Sigma_k\right](F(z_{k+\frac{1}{2}})+\tilde{G}_{k+1})}\\
		&\ -D\left(\frac{1}{2L_k}+\rho_k\right)^{-1}\Sigma_k\left[r\left(\frac{1}{2L_k}+\rho_k\right)+\Sigma_k\right]\inner{\tilde{T}(z_{k+1})-\tilde{T}(z_k),z_{k+1}-z_k}\\
		&\ -D\left(\frac{1}{2L_k}+\rho_k\right)^{-1}\Sigma_k\inner{\tilde{T}(z_{k+1})-\tilde{T}(z_k),\frac{1}{L_k}\left[r\left(\frac{1}{2L_k}+\rho_k\right)+\Sigma_k\right](F(z_{k+\frac{1}{2}})+\tilde{G}_{k+1})}\\
		&\ -D\left(\frac{1}{2L_k}+\rho_k\right)^{-1}\Sigma_k\inner{\tilde{T}(z_{k+1})-\tilde{T}(z_k),2\rho_k\Sigma_k\tilde{T}(z_k)}.
	\end{align*}
	By the assumption \[
	\norm{F(z_{k+1})-F(z_{k+\frac{1}{2}})}\leqslant L_k\norm{z_{k+1}-z_{k+\frac{1}{2}}},
	\]
	we have
	\[\norm{F(z_{k+1})-F(z_{k+\frac{1}{2}})}^2\leqslant \norm{F(z_{k+\frac{1}{2}})+\tilde{G}_{k+1}-\left(\Sigma_k+\frac{r}{2L_k}+r\rho_k\right)^{-1}\Sigma_k\tilde{T}(z_k)}^2.\] 
	Combining previous estimation and the assumption\[
	\inner{\tilde{T}(z_{k+1})-\tilde{T}(z_k),z_{k+1}-z_k}\geqslant\rho_k\norm{\tilde{T}(z_{k+1})-\tilde{T}(z_k)}^2,
	\]
	we can deduce the upper bound of II as follows:\begin{align*}
		\text{II}\leqslant &  \ -D\cfrac{\left[r\left(\cfrac{1}{2L_k}+\rho_k\right)+\Sigma_k\right]^2}{1+2L_k\rho_k}\norm{\tilde{T}(z_{k+1})}^2+D^2\left(\frac{1}{2L_k}+\rho_k\right)\Sigma_{k+1}\norm{\tilde{T}(z_{k+1})}^2\\
		&\ -D\rho_k\left(\frac{1}{2L_k}+\rho_k\right)^{-1}\Sigma_k\left[r\left(\frac{1}{2L_k}+\rho_k\right)+\Sigma_k\right]\norm{\tilde{T}(z_{k+1})}^2\\
		&\ +D(\Sigma_k^2-r\rho_k\Sigma_k)\norm{\tilde{T}(z_k)}^2+D(r^2-r)\left(\frac{1}{2L_k}+\rho_k\right)\inner{\tilde{T}(z_{k+1}),u_k-z_{k+1}}.
	\end{align*}
	Next, we consider III. By \eqref{eq:normdifference}, we have
	\[\text{III}= \ -D(r^2-r)\left(\frac{1}{2L_k}+\rho_k\right)\inner{\tilde{T}(z_{k+1}),u_k-z^*}+\frac{D(r-1)}{2}\left(\frac{1}{2L_k}+\rho_k\right)^2\norm{\tilde{T}(z_{k+1})}^2.\]
	
	Finally, we can estimate the upper bound of $\E(k+1)-\E(k)$. Combining previous estimations and non-decreasing of $\{\rho_k\}$, we have\begin{align*}
		&\ \E(k+1)-\E(k)\\
		\leqslant &\ -D\left[\frac{(r-1)^2}{2L_k}\left(\frac{1}{2L_k}+\rho_k\right)+\Sigma_{k+1}^2+\frac{r-1}{L_k}\Sigma_{k+1}-(r-1)\left(\frac{1}{2L_k}+\rho_k\right)\rho_k\right]\norm{\tilde{T}(z_{k+1})}^2\\
		&\ +D\left[\Sigma_{k+1}^2-2(r-1)\rho_k\Sigma_{k+1}+D\left(\frac{1}{2L_k}+\rho_k\right)\Sigma_{k+1}+\frac{D(r-1)}{2}\left(\frac{1}{2L_k}+\rho_k\right)^2\right]\norm{\tilde{T}(z_{k+1})}^2\\
		&\ -D(r^2-r)\left(\frac{1}{2L_k}+\rho_k\right)\inner{\tilde{T}(z_{k+1}),z_{k+1}-z^*}.
	\end{align*}
	By adding and subtracting $D(r^2-r)\rho_k\left(\dfrac{1}{2L_k}+\rho_k\right)\norm{\tilde{T}(z_{k+1})}^2$, we have
	\begin{align*}
		\E(k+1)-\E(k)\leqslant&\ -\frac{D}{2}\left(\frac{1}{2L_k}+\rho_k\right)\left(r-1-\frac{D}{2}\right)\left[(r-1)\left(\frac{1}{2L_k}+\rho_k\right)+2\Sigma_{k+1}\right]\norm{\tilde{T}(z_{k+1})}^2\\
		&\ +D(r^2-r)\left(\frac{1}{2L_k}+\rho_k\right)\left(\rho_k\norm{\tilde{T}(z_{k+1})}^2-\inner{\tilde{T}(z_{k+1}),z_{k+1}-z^*}\right).
	\end{align*}
	At this point, we obtain the desired result.
\end{proof}
By using Theorem \ref{thm:linesearchseg+}, one can easily propose a sufficient condition to guarantee the convergence of Algorithm \ref{al:linesearchseg+}.
\begin{corollary}
	\label{coro:linesearchseg+}
	Let $\{z_{\frac{k}{2}}\}$, $\{\tilde{z}_{k+1}\}$ and $\{u_k\}$ be the sequences generated by Algorithm \ref{al:linesearchseg+}. If the parameters $\{L_k\}$, $\{\rho_k\}$ and $D$ satisfy the assumptions in Theorem \ref{thm:linesearchseg+} and there exists a $z^*$ such that the inequality\[
	\inner{F(z_{k+1}+\tilde{G}_{k+1}, z_{k+1}-z^*}\geqslant\rho_k\norm{F(z_{k+1})+\tilde{G}_{k+1}}^2
	\]
	is hold for all $k=0$, $1$, $\cdots$, then we have\[
	\dist(0,T(z_k))^2\leqslant\cfrac{r^2(r-1)^2}{[2(r-1)D-D^2]\left(\sum\limits_{i=0}^{k-1}\cfrac{1}{2L_i}+\rho_i\right)^2}\norm{z_0-z^*}^2,\quad\forall k\geqslant 1.
	\]
\end{corollary}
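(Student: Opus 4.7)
The plan is to combine Theorem \ref{thm:linesearchseg+} with a completed-square lower bound on $\E(k)$, exactly mirroring the endgame of the proof of Theorem \ref{thm:sfbsmrate}. Throughout I write $\Sigma_k = \sum_{i=0}^{k-1}\left(\tfrac{1}{2L_i}+\rho_i\right)$ and $\tilde{T}(z_k) = F(z_k) + \tilde{G}_k$, and I use the hypothesis in the form
\[
\inner{\tilde{T}(z_k),\, z_k - z^*} \geqslant \rho_{k-1}\norm{\tilde{T}(z_k)}^2, \qquad k \geqslant 1,
\]
which is just a reindexing of the assumed inequality (and at $k=0$ is vacuous because $\Sigma_0 = 0$).

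First, I would use this reindexed inequality, applied at step $k+1$ with multiplier $\rho_k$, to make the second term in the upper bound of $\E(k+1)-\E(k)$ given by Theorem \ref{thm:linesearchseg+} nonpositive. Since the first term is manifestly nonpositive once $D < 2(r-1)$, it follows that $\{\E(k)\}$ is nonincreasing, so $\E(k) \leqslant \E(0)$. Evaluating at $k=0$ is immediate: $\Sigma_0 = 0$ and $u_0 = z_0$, so $\E(0) = \tfrac{r^3-r^2}{2}\norm{z_0-z^*}^2$.

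Next I would produce a lower bound for $\E(k)$ of the form $c\,\Sigma_k^2\,\norm{\tilde{T}(z_k)}^2$. The standard completion-of-squares trick, with $a^2 = D^2\Sigma_k^2/(r-1)$ and $b^2 = r^3-r^2$ chosen so that $ab = Dr\Sigma_k$, recasts the Lyapunov function as
\[
\E(k) = D\!\left[\Sigma_k^2 - r\rho_{k-1}\Sigma_k - \tfrac{D\Sigma_k^2}{2(r-1)}\right]\!\norm{\tilde{T}(z_k)}^2 + Dr\Sigma_k\inner{\tilde{T}(z_k), z_k - z^*} + \tfrac{1}{2}\bigl\|\tfrac{D\Sigma_k}{\sqrt{r-1}}\tilde{T}(z_k) - \sqrt{r^3-r^2}\,(u_k - z^*)\bigr\|^2.
\]
Applying the hypothesis $\inner{\tilde{T}(z_k), z_k - z^*} \geqslant \rho_{k-1}\norm{\tilde{T}(z_k)}^2$ exactly cancels the $-Dr\rho_{k-1}\Sigma_k\norm{\tilde{T}(z_k)}^2$ contribution, leaving
\[
\E(k) \geqslant D\!\left(1 - \tfrac{D}{2(r-1)}\right)\!\Sigma_k^2 \norm{\tilde{T}(z_k)}^2 = \tfrac{2(r-1)D - D^2}{2(r-1)}\,\Sigma_k^2 \norm{\tilde{T}(z_k)}^2.
\]

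Chaining the lower bound on $\E(k)$ with $\E(k) \leqslant \E(0)$ and dividing yields
\[
\norm{\tilde{T}(z_k)}^2 \leqslant \frac{r^2(r-1)^2}{[2(r-1)D - D^2]\,\Sigma_k^2}\norm{z_0-z^*}^2,
\]
and the claim follows from $\dist(0, T(z_k))^2 \leqslant \norm{\tilde{T}(z_k)}^2$. I don't anticipate a hard step here: the monotonicity of $\E$ is handed to us by Theorem \ref{thm:linesearchseg+}, and the completed-square rewriting of $\E(k)$ is the same identity that already appears in the proof of Theorem \ref{thm:sfbsmrate}. The only delicate point is bookkeeping around the $\rho_{k-1}$ versus $\rho_k$ indices and checking that the hypothesis is strong enough to absorb both the second term of the difference bound and the cross term in the Lyapunov function — which it is, once one notes that it is precisely the comonotonicity-style inequality the Lyapunov function was built to exploit.
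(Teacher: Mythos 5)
Your proposal is correct and follows essentially the same route as the paper: invoke the difference bound from Theorem \ref{thm:linesearchseg+} together with the hypothesis $\inner{\tilde{T}(z_{k+1}),z_{k+1}-z^*}\geqslant\rho_k\norm{\tilde{T}(z_{k+1})}^2$ to get $\E(k+1)\leqslant\E(k)$, then complete the square in $\E(k)$ and use the same hypothesis (at index $k$, with $\rho_{k-1}$) to obtain the lower bound $\bigl(D-\tfrac{D^2}{2(r-1)}\bigr)\Sigma_k^2\norm{\tilde{T}(z_k)}^2\leqslant\E(k)\leqslant\E(0)=\tfrac{r^3-r^2}{2}\norm{z_0-z^*}^2$. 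Your explicit bookkeeping of the $\rho_{k-1}$ versus $\rho_k$ indices is in fact slightly more careful than the paper's own write-up.
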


\begin{proof}
	Consider the Lyapunov function $\E(k)$ defined in Theorem \ref{thm:linesearchseg+}. Let $z^*$ in $\E(k)$ be the point defined in Corollary \ref{coro:linesearchseg+}. By the upper bound of $\E(k+1)-\E(k)$ in Theorem \ref{thm:linesearchseg+} , the assumption $D<2(r-1)$ and $\inner{F(z_{k+1})+\tilde{G}_{k+1},z_{k+1}-z^*}\geqslant\rho_k\norm{F(z_{k+1})}^2$, we have\[
	\E(k+1)-\E(k)\leqslant 0.
	\]
	Next, we deduce the lower bound of $\E(k)$. Since $\inner{F(z_{k+1})+\tilde{G}_{k+1},z_{k+1}-z^*}\geqslant\rho_k\norm{F(z_{k+1})+\tilde{G}_{k+1}}^2$, we have\begin{align*}
		\E(k) \geqslant&  \left(D-\frac{D^2}{2(r-1)}\right)\left(\sum_{i=0}^{k-1}\frac{1}{2L_i}+\rho_i\right)^2\norm{F(z_k)+\tilde{G}_k}^2\\
		&\ +\frac{1}{2}\norm{\frac{D}{\sqrt{r-1}}\left(\sum_{i=0}^{k-1}\frac{1}{2L_i}+\rho_i\right)F(z_k)-\sqrt{r^3-r^2}(u_k-z^*)}^2.
	\end{align*}
	Thus
	\begin{align*}
		&\ \left(D-\frac{D^2}{2(r-1)}\right)\left(\sum_{i=0}^{k-1}\frac{1}{2L_i}+\rho_i\right)^2\dist(0,T(z_k))^2\\
		\leqslant&\ \left(D-\frac{D^2}{2(r-1)}\right)\left(\sum_{i=0}^{k-1}\frac{1}{2L_i}+\rho_i\right)^2\norm{F(z_k)+\tilde{G}_k}^2\\
		\leqslant&\ \E(k)\leqslant\E(0)=\frac{r^3-r^2}{2}\norm{z_0-z^*}^2.
	\end{align*}
	By dividing $\left(D-\dfrac{D^2}{2(r-1)}\right)\left(\sum\limits_{i=0}^{k-1}\dfrac{1}{2L_i}+\rho_i\right)^2$ on both sides of above inequality, we obtain the desired result.
\end{proof}

However, the assumption $\inner{F(z_{k+1})+\tilde{G}_{k+1}, z_{k+1}-z^*}\geqslant\rho_k\norm{F(z_{k+1})}^2$ can not be determined in practice. To overcome such problem, we introduce an upper bound constraint on $\rho_k$.
\begin{corollary}
	\label{coro:linesearchseg+constant}
	Let $\{z_{\frac{k}{2}}\}$, $\{\tilde{z}_{k+1}\}$ and $\{u_k\}$ be the sequences generated by Algorithm \ref{al:linesearchseg+}. If the parameters $\{L_k\}$, $\{\rho_k\}$ and $D$ satisfy the assumptions in Theorem \ref{thm:linesearchseg+} and there exists a $z^*$ such that\[
	\begin{gathered}
		\inner{F(z_k)+\tilde{G}_k, z-z^*}\geqslant\tilde{\rho}_k\norm{F(z_k)+\tilde{G}_k}^2,\quad\forall z, \\
		\rho_k-\tilde{\rho}_k\leqslant\frac{E_k}{r}\left(1-\frac{D}{2(r-1)}\right)\left(\sum_{i=0}^{k}\frac{1}{2L_i}+\rho_i\right),\quad  E_k\in[0,1],
	\end{gathered}	
	\]
	then we have
	\begin{align*}
		&\ \sum_{k=0}^\infty \frac{D}{2}\left(\frac{1}{2L_k}+\rho_k\right)\left(r-1-\frac{D}{2}\right)\left[(r-1)\left(\frac{1}{2L_k}+\rho_k\right)+2(1-E_k)\Sigma_{k+1}\right]\dist(0,T(z_{k+1}))^2\\
		\leqslant&\ \frac{r^3-r^2}{2}\cdot\norm{z_0-z^*}^2,\\
		&(1-E_k)\left(1-\frac{D}{2r-2}\right)\Sigma_k^2\dist(0,T(z_k))^2\leqslant\frac{r^3-r^2}{2D}\cdot\norm{z_0-z^*}^2,
	\end{align*}
	where $\Sigma_k=\sum\limits_{i=0}^{k-1}\dfrac{1}{2L_i}+\rho_i$.
\end{corollary}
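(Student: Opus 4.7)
The plan is to combine the per-step estimate of Theorem \ref{thm:linesearchseg+} with a matching non-negative lower bound on the Lyapunov function $\E(k)$, using the two hypotheses of the corollary in complementary roles. Starting from
\begin{align*}
\E(k+1)-\E(k)\leqslant{}& -\tfrac{D}{2}\bigl(\tfrac{1}{2L_k}+\rho_k\bigr)\bigl(r-1-\tfrac{D}{2}\bigr)\bigl[(r-1)\bigl(\tfrac{1}{2L_k}+\rho_k\bigr)+2\Sigma_{k+1}\bigr]\norm{\tilde T(z_{k+1})}^2\\
&+ D(r^2-r)\bigl(\tfrac{1}{2L_k}+\rho_k\bigr)\bigl(\rho_k\norm{\tilde T(z_{k+1})}^2-\inner{\tilde T(z_{k+1}),z_{k+1}-z^*}\bigr),
\end{align*}
I would plug $z=z_{k+1}$ into the first hypothesis to obtain $\inner{\tilde T(z_{k+1}),z_{k+1}-z^*}\geqslant\tilde\rho_{k+1}\norm{\tilde T(z_{k+1})}^2$, leaving a residual proportional to $(\rho_k-\tilde\rho_{k+1})\norm{\tilde T(z_{k+1})}^2$. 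The gap hypothesis (applied at a suitable index) bounds this by $\tfrac{E_k}{r}\bigl(1-\tfrac{D}{2(r-1)}\bigr)\Sigma_{k+1}\norm{\tilde T(z_{k+1})}^2$, and because $D(r^2-r)\cdot\tfrac{1}{r}\bigl(1-\tfrac{D}{2(r-1)}\bigr)=D\bigl(r-1-\tfrac{D}{2}\bigr)$, the residual absorbs cleanly into the leading negative bracket, replacing $2\Sigma_{k+1}$ by $2(1-E_k)\Sigma_{k+1}$ and yielding the one-step estimate in the first asserted inequality.

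Next I would establish $\E(k)\geqslant 0$ by completing the square on its last two terms:
\[
Dr\Sigma_k\inner{\tilde T(z_k),z^*-u_k}+\tfrac{r^3-r^2}{2}\norm{u_k-z^*}^2=\tfrac{1}{2}\bigl\|\tfrac{D\Sigma_k}{\sqrt{r-1}}\tilde T(z_k)-r\sqrt{r-1}(u_k-z^*)\bigr\|^2-\tfrac{D^2\Sigma_k^2}{2(r-1)}\norm{\tilde T(z_k)}^2.
\]
Inserting the cross-term bound $Dr\Sigma_k\inner{\tilde T(z_k),z_k-z^*}\geqslant Dr\tilde\rho_k\Sigma_k\norm{\tilde T(z_k)}^2$ and invoking the gap hypothesis at index $k-1$ to dominate $\rho_{k-1}-\tilde\rho_k$, the scalar coefficient of $\norm{\tilde T(z_k)}^2$ collapses via the identity $1-\tfrac{D}{2(r-1)}-E_{k-1}\bigl(1-\tfrac{D}{2(r-1)}\bigr)=(1-E_{k-1})\bigl(1-\tfrac{D}{2(r-1)}\bigr)$ to yield
\[
\E(k)\geqslant D(1-E_{k-1})\bigl(1-\tfrac{D}{2(r-1)}\bigr)\Sigma_k^2\norm{\tilde T(z_k)}^2+\tfrac{1}{2}\bigl\|\tfrac{D\Sigma_k}{\sqrt{r-1}}\tilde T(z_k)-r\sqrt{r-1}(u_k-z^*)\bigr\|^2,
\]
which is non-negative since $D<2(r-1)$ and $E_{k-1}\in[0,1]$.

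Finally, summing the one-step estimate from $0$ to $\infty$ telescopes, and combining $\lim_{k\to\infty}\E(k)\geqslant 0$ with $\E(0)=\tfrac{r^3-r^2}{2}\norm{z_0-z^*}^2$ yields the first asserted inequality after using $\dist(0,T(z_{k+1}))\leqslant\norm{F(z_{k+1})+\tilde G_{k+1}}$. Pairing the above lower bound on $\E(k)$ with $\E(k)\leqslant\E(0)$ and the same distance estimate produces the second. The main obstacle I anticipate is the index-bookkeeping: the per-step decrement naturally pairs $\rho_k$ with $\tilde\rho_{k+1}$, whereas the square-completion lower bound on $\E(k)$ pairs $\rho_{k-1}$ with $\tilde\rho_k$, so the gap hypothesis must be applied at two shifted indices, and one must verify that the $z^*$ furnished by the first hypothesis can be chosen uniformly in $k$ so that both applications reference the same anchor point.
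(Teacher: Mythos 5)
Your proposal is correct and follows essentially the same route as the paper: apply the per-step bound of Theorem \ref{thm:linesearchseg+}, absorb the comonotone residual via the gap hypothesis using the identity $D(r^2-r)\cdot\tfrac{1}{r}\bigl(1-\tfrac{D}{2(r-1)}\bigr)=D\bigl(r-1-\tfrac{D}{2}\bigr)$, lower-bound $\E(k)$ by completing the square on its last two terms, and then telescope against $\E(0)=\tfrac{r^3-r^2}{2}\norm{z_0-z^*}^2$. The index-shift you flag at the end is a real wrinkle, but the paper itself glosses over it, simply invoking both hypotheses at the stated indices with a single anchor $z^*$ and without comment.
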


\begin{proof}
	For the sake of simplicity, let $\tilde{T}(z_k)=F(z_k)+\tilde{G}_k$. Let $z^*$ in Theorem \ref{thm:linesearchseg+} be the point such that 
	$\inner{\tilde{T}(z_k),z_k-z^*}\geqslant\tilde{\rho}_k\norm{\tilde{T}(z_k)}^2$, $\forall k$. Then we have\begin{align*}
		\E(k+1)-\E(k)\leqslant&\ -\frac{D}{2}\left(\frac{1}{2L_k}+\rho_k\right)\left(r-1-\frac{D}{2}\right)\left[(r+1)\left(\frac{1}{2L_k}+\rho_k\right)+2\Sigma_{k+1}\right]\norm{\tilde{T}(z_{k+1})}^2\\
		&\ +D(r^2-r)\left(\frac{1}{2L_k}+\rho_k\right)(\rho_k-\tilde{\rho}_k)\norm{\tilde{T}(z_{k+1})}^2.
	\end{align*}
	By the assumption that $\rho_k-\tilde{\rho}_k\leqslant\dfrac{E}{r}\left(1-\dfrac{D}{2(r-1)}\right)\Sigma_{k+1}$, we have\begin{align*}
		&\ \E(k+1)-\E(k)\\
		\leqslant&\ -\frac{D}{2}\left(\frac{1}{2L_k}+\rho_k\right)\left(r-1-\frac{D}{2}\right)\left[(r-1)\left(\frac{1}{2L_k}+\rho_k\right)+2(1-E_k)\Sigma_{k+1}\right]\norm{\tilde{T}(z_{k+1})}^2.
		zs\end{align*}
	In addition, the lower bound of $\E(k)$ can be estimated as follows:\[
	\E(k)\geqslant (1-E_k)D\left(1-\frac{D}{2r-2}\right)\Sigma_k^2\norm{\tilde{T}(z_k)}^2+\frac{1}{2}\norm{\frac{D}{\sqrt{r-1}}\Sigma_k\tilde{T}(z_k)-\sqrt{r^3-r^2}(u_k-z^*)}^2\geqslant 0.
	\]
	Thus, we have\begin{align*}
		&\ (1-E_k)\left(1-\frac{D}{2r-2}\right)\Sigma_k^2\dist(0,T(z_k))^2 \\
		\leqslant &\ (1-E_k)\left(1-\frac{D}{2r-2}\right)\Sigma_k^2\norm{\tilde{T}(z_k)}^2\\
		\leqslant&\ \E(k)\leqslant\E(0)=\frac{r^3-r^2}{2}\norm{z_0-z^*}^2.
	\end{align*}
	Also, by summing $\E(k+1)-\E(k)$ from 0 to $\infty$, we can obtain the ergodic convergence rate of Algorithm \ref{al:linesearchseg+}.
\end{proof}

The line search framework of the SPEG+ method is similar to Algorithm \ref{al:linesearchseg+}, which is described as follows along with its convergence results.
\begin{corollary}[SPEG+ with line search]
	\label{coro:linesearchspeg+}
	Let $\{z_{\frac{k}{2}}\}$, $\{\tilde{z}_{k+1}\}$ and $\{u_k\}$ be the sequences generated by the following recursive rule:\begin{align*}
		\tilde{z}_{k+1} &= (1-\alpha_k)z_k+\alpha_ku_k,\\
		z_{k+\frac{1}{2}} &= P_C\left(\tilde{z}_{k+1}-(1-\alpha_k)\frac{1}{L_k}F(z_k)\right),\\
		z_{k+1} &= P_C\left(\tilde{z}_{k+1}-\dfrac{1}{L_k}F(z_{k+\frac{1}{2}})\right),\\
		\tilde{C}_{k+1}  &= L_k(\tilde{z}_{k+1}-z_{k+1})-F(z_{k+\frac{1}{2}}),\\
		u_{k+1} &= u_k-\dfrac{D}{2rL_k}[F(z_{k+1})+\tilde{C}_{k+1}].
	\end{align*}
	If $\alpha_k=\dfrac{rL_k^{-1}}{\sum_{i=0}^{k-1}L_i^{-1}+rL_k}$, $r>1$, $0<D<2(r-1)$ and $L_k\geqslant\dfrac{\norm{F(z_{k+\frac{1}{2}})-F(z_{k+1})}}{\norm{z_{k+\frac{1}{2}}-z_{k+1}}}$, then we have\begin{align*}
		\dist(0,T(z_k))^2 & \leqslant \cfrac{r^2(r-1)^2}{[2(r-1)D-D^2]\left(\sum\limits_{i=0}^{k-1}\cfrac{1}{2L_i}\right)^2}\cdot\dist(z_0,\zero(T))^2,\quad\forall k\geqslant 1,\\
		\inner{F(z_k)+\tilde{C}_k, z_k-z^*} & \leqslant\frac{r^2-r}{Dr\sum_{i=0}^{k-1}{L_i}^{-1}}\norm{z_0-z^*}^2,\quad\forall z^*\in\zero(T).
	\end{align*}
\end{corollary}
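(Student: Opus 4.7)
The plan is to adapt the Lyapunov-function analysis of Theorem~\ref{thm:spegrate} to adaptive step sizes, in the same way that Theorem~\ref{thm:linesearchseg+} adapts Theorem~\ref{thm:sfbsmrate}. Set $\Sigma_k := \sum_{i=0}^{k-1}\tfrac{1}{2L_i}$, fix $z^*\in\zero(T)$, and work with the Lyapunov function
\begin{align*}
\E(k) := D\Sigma_k^2\norm{F(z_k)+\tilde{C}_k}^2 + Dr\Sigma_k\inner{F(z_k)+\tilde{C}_k, z_k-u_k} + \frac{r^3-r^2}{2}\norm{u_k-z^*}^2.
\end{align*}
This is the $\rho_k\equiv 0$ specialization of the Lyapunov function in Theorem~\ref{thm:linesearchseg+}, and collapses to the Lyapunov function of Theorem~\ref{thm:spegrate} in the constant-$L$ regime (up to the rescaling of $D$ induced by the extra $\tfrac{1}{2L_k}$ factor in the $u_{k+1}$ update). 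The choice $\alpha_k = \tfrac{rL_k^{-1}}{2\Sigma_k+rL_k^{-1}}$ is precisely what guarantees the two identities
\begin{align*}
2\Sigma_k(z_{k+1}-z_k) &= rL_k^{-1}(u_k-z_{k+1}) - L_k^{-1}(2\Sigma_k+rL_k^{-1})[F(z_{k+\frac{1}{2}})+\tilde{C}_{k+1}],\\
rL_k^{-1}(z_k-u_k) &= -(2\Sigma_k+rL_k^{-1})(z_{k+1}-z_k) - L_k^{-1}(2\Sigma_k+rL_k^{-1})[F(z_{k+\frac{1}{2}})+\tilde{C}_{k+1}],
\end{align*}
completely analogous to \eqref{eq:usefulthm2_a}--\eqref{eq:usefulthm2_b}.

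Next I would decompose $\E(k+1)-\E(k)$ into three parts I, II, III, as in the proof of Theorem~\ref{thm:spegrate}, and bound them separately. Part III is handled verbatim using the identity \eqref{eq:normdifference} together with the update $u_{k+1}-u_k=-\tfrac{D}{2rL_k}[F(z_{k+1})+\tilde{C}_{k+1}]$. Part II is the bulk of the work: substituting the two identities above into II and expanding yields a combination of inner products that can be simplified using (i) the polarization identity to rewrite $-\inner{F(z_{k+1}), F(z_{k+\frac{1}{2}})}$ in terms of the error $\tfrac{1}{2}\norm{F(z_{k+1})-F(z_{k+\frac{1}{2}})}^2$; (ii) the line-search inequality $\norm{F(z_{k+1})-F(z_{k+\frac{1}{2}})}\leqslant L_k\norm{z_{k+1}-z_{k+\frac{1}{2}}}$ together with $\inner{\tilde{C}_{k+\frac{1}{2}}, z_{k+\frac{1}{2}}-z_{k+1}}\geqslant 0$ to absorb the error term; (iii) monotonicity of $F$ combined with $\inner{\tilde{C}_{k+1}, z_{k+1}-z_k}\geqslant 0$ to force $\inner{F(z_{k+1})+\tilde{C}_{k+1}-F(z_k),\, z_{k+1}-z_k}\geqslant 0$; and (iv) $\inner{\tilde{C}_k, z_{k+\frac{1}{2}}-z_k}\leqslant 0$ to discard residual normal-cone contributions from the half-iterate. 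Together with the hypothesis $0<D<2(r-1)$, this should yield $\E(k+1)-\E(k)\leqslant 0$, monotonicity of $\E(k)$ following at once.

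Finally, to extract both estimates, I would complete the square to rewrite
\begin{align*}
\E(k) = \left(D-\frac{D^2}{2(r-1)}\right)\Sigma_k^2\norm{F(z_k)+\tilde{C}_k}^2 + Dr\Sigma_k\inner{F(z_k)+\tilde{C}_k, z_k-z^*} + \frac{1}{2}\Bigl\|\frac{D\Sigma_k}{\sqrt{r-1}}(F(z_k)+\tilde{C}_k)-\sqrt{r^3-r^2}(u_k-z^*)\Bigr\|^2.
\end{align*}
Each summand is non-negative: the first because $D<2(r-1)$, and the second because $F$ is monotone while $z^*\in\zero(T)\subseteq C$ implies $-F(z^*)\in N_C(z^*)$, which together with $\tilde{C}_k\in N_C(z_k)$ forces $\inner{F(z_k)+\tilde{C}_k, z_k-z^*}\geqslant 0$. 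Combined with $\E(k)\leqslant\E(0)=\tfrac{r^3-r^2}{2}\norm{z_0-z^*}^2$, the bound $\dist(0,T(z_k))\leqslant\norm{F(z_k)+\tilde{C}_k}$, and taking the infimum over $z^*\in\zero(T)$, the quadratic summand yields the stated rate on $\dist(0,T(z_k))^2$, and the cross summand yields the stated bound on $\inner{F(z_k)+\tilde{C}_k, z_k-z^*}$. The main obstacle I expect is the algebraic bookkeeping inside II: because $L_k$ varies from iteration to iteration, the quantities $\tfrac{1}{2L_k}$, $\Sigma_k$, and $rL_k^{-1}$ interlace in noticeably messier combinations than in the constant-$L$ proof of Theorem~\ref{thm:spegrate}, and care will be needed to verify that the line-search absorption in step~(ii) is tight enough that the leading cross term collapses cleanly to $-\tfrac{D}{2}\cdot\tfrac{1}{2L_k}(r-1-\tfrac{D}{2})[(r-1)\tfrac{1}{2L_k}+2\Sigma_{k+1}]\norm{F(z_{k+1})+\tilde{C}_{k+1}}^2$.
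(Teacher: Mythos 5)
Your proposal is correct and follows exactly the route the paper prescribes for this result (the paper omits the proof, pointing to Theorem~\ref{thm:linesearchseg+} and Corollary~\ref{coro:linesearchseg+}): take the $\rho_k\equiv 0$ specialization of the line-search Lyapunov function with $\Sigma_k=\sum_{i=0}^{k-1}\tfrac{1}{2L_i}$, verify the two displacement identities induced by the choice of $\alpha_k$, reuse the normal-cone and line-search estimates from the proof of Theorem~\ref{thm:spegrate} to get $\E(k+1)\leqslant\E(k)$, and complete the square to extract both bounds. The only discrepancy is that your argument yields $\inner{F(z_k)+\tilde{C}_k,z_k-z^*}\leqslant\frac{r^2-r}{D\sum_{i=0}^{k-1}L_i^{-1}}\norm{z_0-z^*}^2$ rather than the stated $\frac{r^2-r}{Dr\sum_{i=0}^{k-1}L_i^{-1}}\norm{z_0-z^*}^2$; the extra factor $r$ in the denominator of the statement appears to be a typo, since the constant-$L_k$ specialization of your bound is the one consistent with Theorem~\ref{thm:spegrate}.
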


The proof for Corollary \ref{coro:linesearchspeg+} is similar to the proof for Theorem \ref{thm:linesearchseg+} and Corollary \ref{coro:linesearchseg+}. Thus, we omit the proof of Corollary \ref{coro:linesearchspeg+}.

\noindent\textbf{Remark. } 1) Although previous arguments suggest that when $D=r-1$, the corresponding instance of Algorithm \ref{al:linesearchseg+} may obtain the fastest numerical performance, this result is not generally observed in practical numerical experiment. The reason is that the inequalities
\begin{align*}
	\rho_k&\leqslant\frac{\inner{F(z_{k+1})+\tilde{G}_{k+1}-F(z_k)+\tilde{G}_k, z_{k+1}-z_k}}{\norm{F(z_{k+1})+\tilde{G}_{k+1}-F(z_k)-\tilde{G}_k}^2},\\
	L_k&\geqslant\frac{\norm{F(z_{k+\frac{1}{2}})-F(z_{k+1})}}{\norm{z_{k+\frac{1}{2}}-z_{k+1}}}
\end{align*}
are generally strict. Thus we encourage that $D>r-1$ when using Algorithm \ref{al:linesearchseg+}.

2) In Theorem \ref{thm:linesearchseg+}, Algorithm \ref{al:linesearchseg+} imposes a condition that the parameter $\rho_k$ is non-decreasing. Nonetheless, a problem may occur in practice, that is $\rho_k$ may grow too large so that there does not exist a feasible point $z_{k+1}$ such that the line search principle can be fulfilled. To circumvent this problem, a simple solution is restarting the algorithm. Specifically, run Algorithm \ref{al:linesearchseg+} again with initial point $z_k$.
\section{Numerical Experiments}
\label{sec:6}
\subsection{Matrix Game Problem}

Here, we consider the matrix game problem illustrated in Example \ref{ex:matrixgame}. In our experiment, we evaluate the numerical performance of the SPEG+ method with line search. Specifically, the matrix $A \in \mathbb{R}^{1000 \times 1000}$ is randomly generated and the parameter $D$ in Algorithm \ref{al:linesearchseg+} is set to $1.6$. For comparison, both the Projected EG method with line search and the Projected FEG method with line search are included in this experiment. Each algorithm runs for a duration of 40 seconds. Subsequently, we depict the squared norm of $\tilde{T}(z_k) = F(z_k) + \tilde{G}_k$ and the duality gap over time in Figure \ref{fig:matrixgame}. Here, the duality gap is defined as $\max_{y \in \Delta_n} x_k^\top A y - \min_{x \in \Delta_m} x^\top A y_k$.

\begin{figure}[ht]
	\centering
	\subfigure{\includegraphics[width=.4\textwidth]{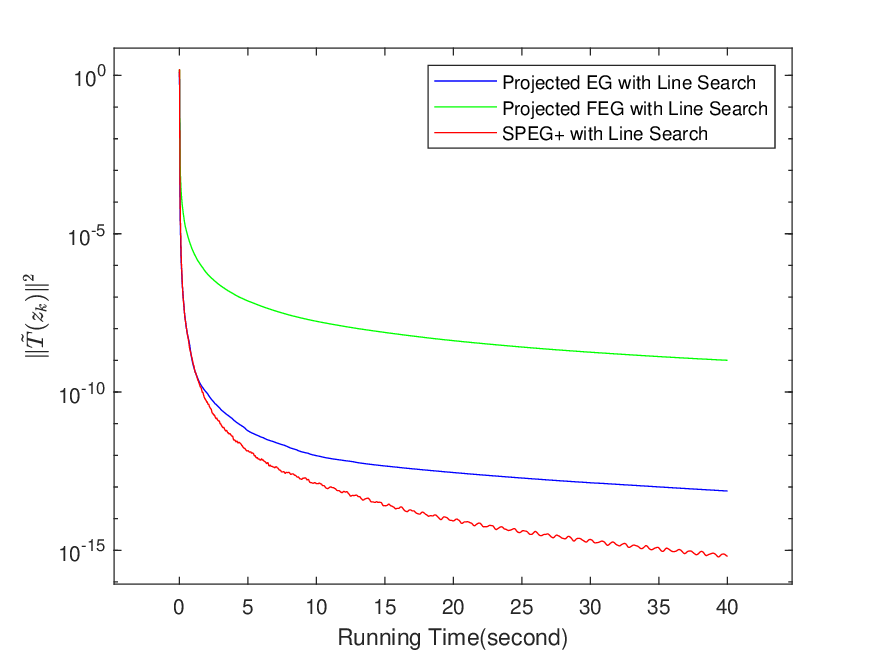}}
	\subfigure{\includegraphics[width=.4\textwidth]{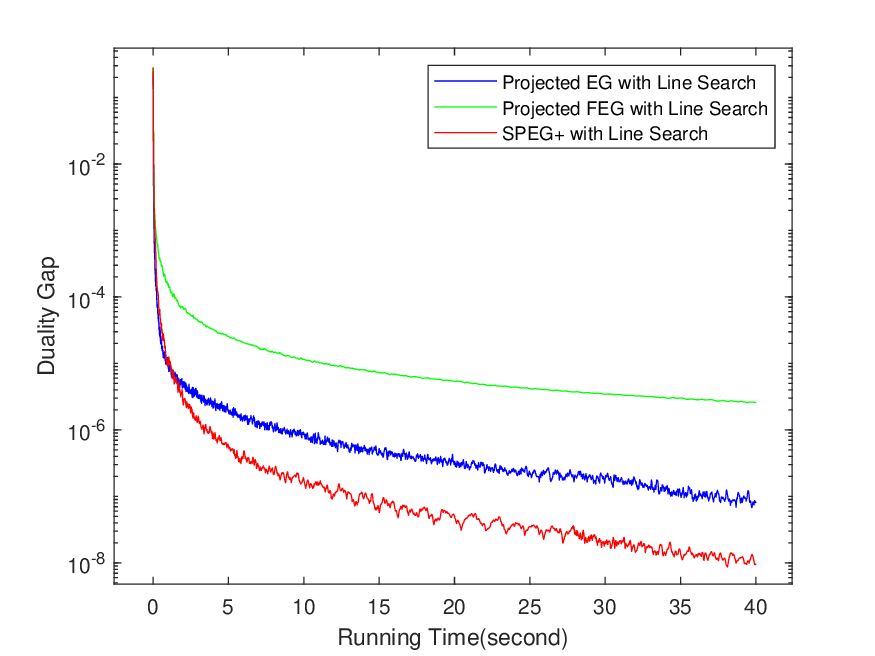}}
	\caption{Evolution of duality gap and $\norm{\tilde{T}(z_k)}^2$ versus running time.}
	\label{fig:matrixgame}
\end{figure}

As we can see from Figure \ref{fig:matrixgame}, in comparison to the projected EG method with line search and the projected FEG method with line search, the SPEG+ method with line search converges fastest. 

Since the normal cone of a closed convex set is a maximally monotone operator, we are interested in examining the numerical performance of the SPEG+ method and the SFBS method. The test problem remains the matrix game. Following a 40-second runtime for each algorithm,  we plot $\norm{\tilde{T}(z_k)}^2$ and duality gap with respect to running time in Figure \ref{fig:SFBSvsSPEG+}, too.
\begin{figure}[ht]
	\centering
	\subfigure{\includegraphics[width=.4\textwidth]{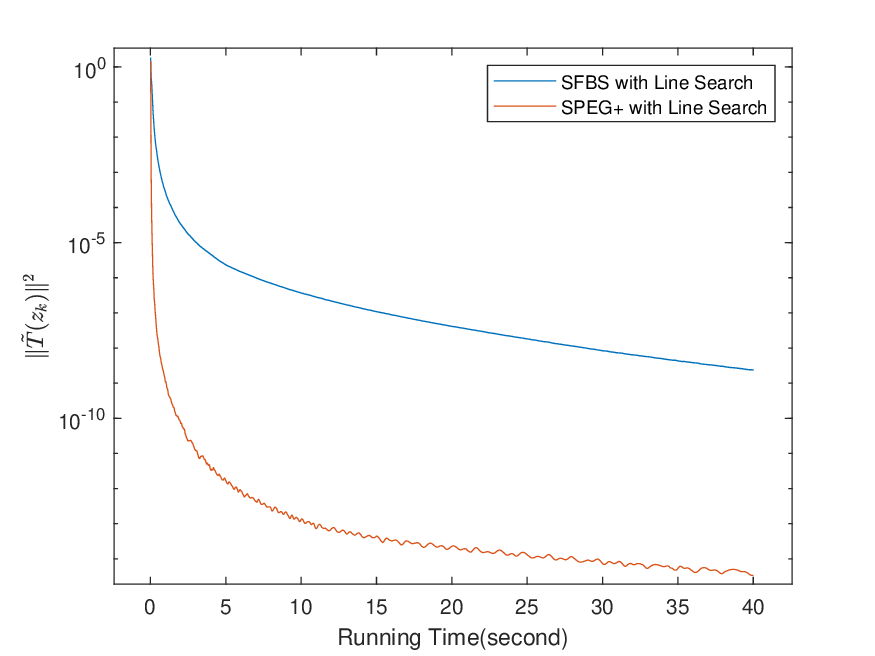}}
	\subfigure{\includegraphics[width=.4\textwidth]{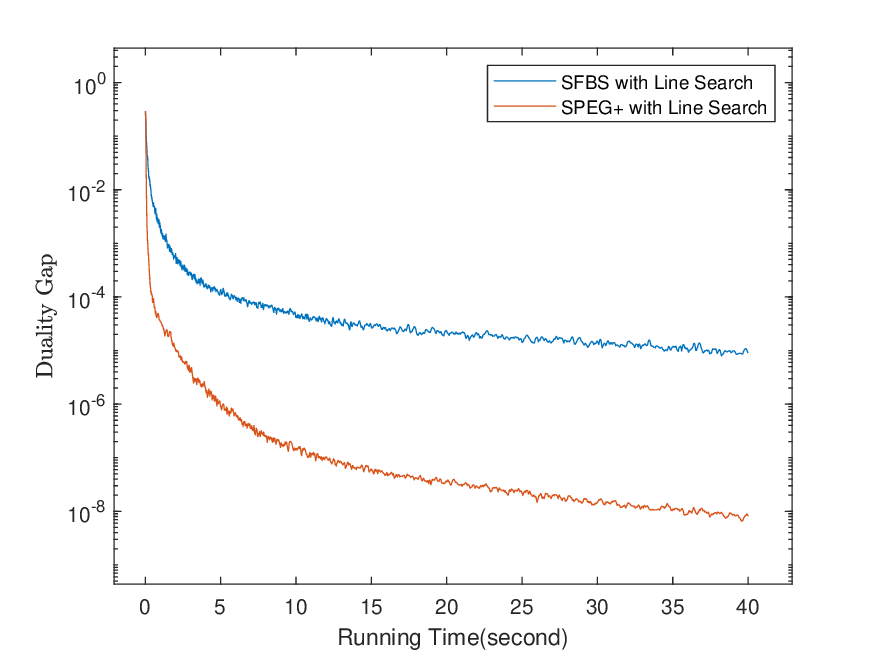}}
	\caption{Comparison of SFBS and SPEG+.}
	\label{fig:SFBSvsSPEG+}
\end{figure}

As Figure \ref{fig:SFBSvsSPEG+} illustrates, the SPEG+ method converges faster than the SFBS method in practice. This suggests that the SPEG+ method may better exploit the structure of \eqref{eq:mainconvexset}. Consequently, we recommend utilizing the SPEG+ method for solving \eqref{eq:mainconvexset} when projections onto $C$ can be easily computed.
\subsection{Lasso Problem}
Here, we consider the LASSO problem\begin{equation}
	\min_x F(x):=\frac{1}{2}\norm{Ax-b}^2+\mu\norm{x}_1.
	\label{eq:lasso}
\end{equation}
\eqref{eq:lasso} can be transformed into the following constrained optimization problem\[
\begin{gathered}
	\min_{x,y} \frac{1}{2}\norm{Ax-b}^2+\mu\norm{y}_1\\
	s.\ t.\quad x=y.
\end{gathered}
\]
The ADMM method is a powerful method to solve LASSO problem and the convergence rate of ADMM was been proven to be $O(1/k^2)$ when applied to LASSO problem in the research work \cite{tian19}. Since the ADMM method is an instance of the proximal point algorithm, we can use Algorithm \ref{al:linesearchseg+} with $F=0$ and $L_k\equiv 1$ to obtain a new ADMM type method with line search. In our experiment, we test the numerical performance of the new ADMM method with line search. Here, the matrix $A\in\mathbb{R}^{1000\times 2000}$ and the vector $b\in\mathbb{R}^{1000}$ are randomly generated, and the parameter $D$ in Algorithm \ref{al:linesearchseg+} is set to be $1.6$. The ADMM method is comparable in our experiment. After running the algorithms, we graph the residual $\norm{x_k-y_k}$ and $F(y_k)-F^*$ respect to running time in the following figure, where $F^*$ represents the optimal value of the LASSO problem \eqref{eq:lasso} as determined by an alternative algorithm.

\begin{figure}[ht]
	\centering
	\subfigure{\includegraphics[width=.4\textwidth]{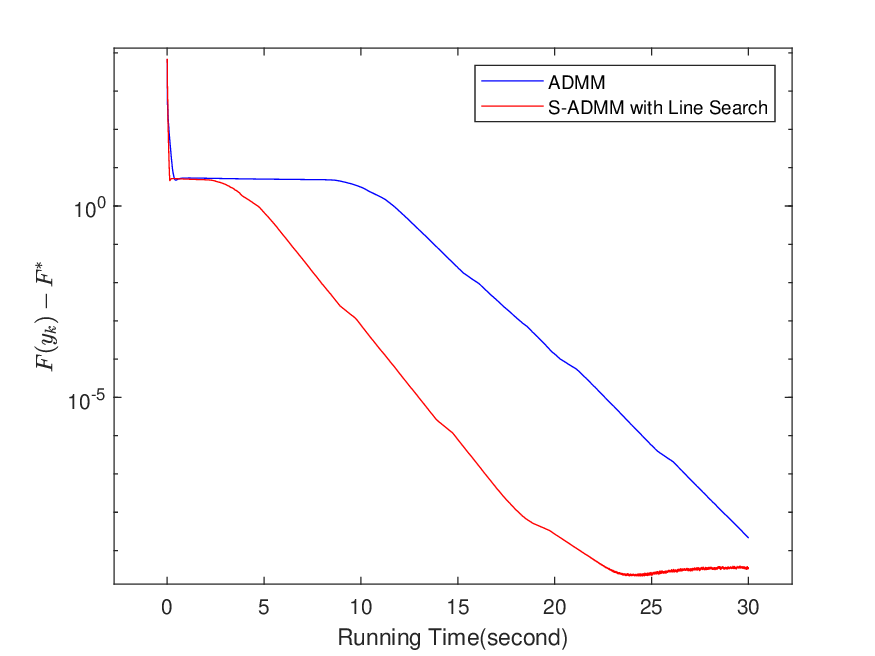}}
	\subfigure{\includegraphics[width=.4\textwidth]{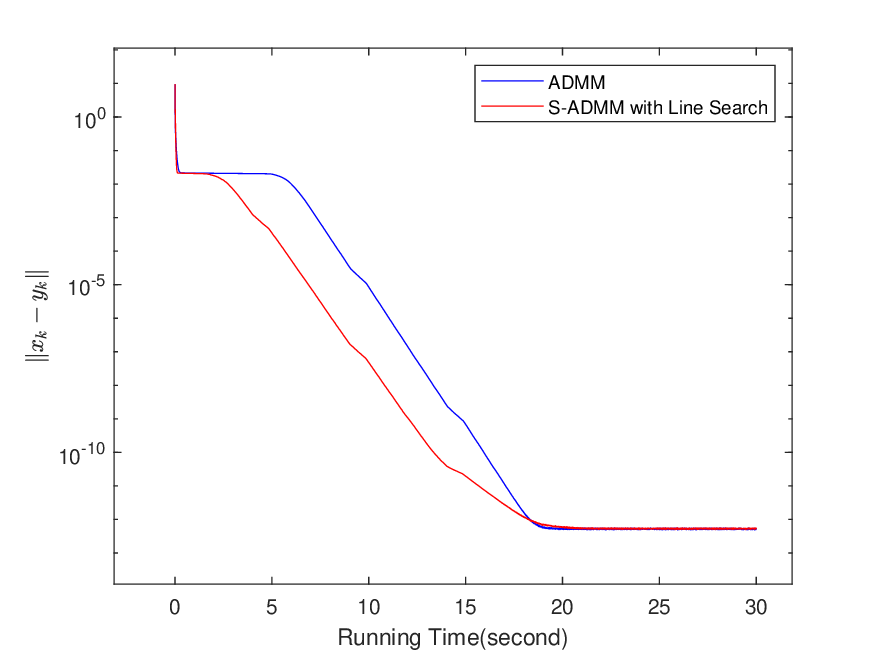}}
	\caption{Evolution of $\norm{x_k-y_k}$ and $F(y_k)-F^*$ versus running time.}
	\label{fig:admm}
\end{figure}

From Figure \ref{fig:admm}, it is evident that despite the theoretically established $O(1/k^2)$ convergence rate of ADMM, the  new ADMM derived by Algorithm \ref{al:linesearchseg+} exhibits even faster convergence in our experiment.
\section{Conclusion and Further Discussion}

\label{sec:7}
In our paper, we introduce the symplectic extra-gradient type methods based on the symplectic proximal point algorithm. By using the Lyapunov function approach, we show these methods have a convergence rate of $O(1/k^2)$. We also prove they can achieve a faster $o(1/k^2)$ convergence rate and show weak convergence property under stronger assumptions. To improve computational speed, we incorporate a line search technique into our symplectic extra-gradient method.

Several theoretical questions around SEG type method remain. With the heightened need for efficient Generative Adversarial Networks training, studying a stochastic version of our method is intriguing. Furthermore, inspired by Tseng's method for more general monotone inclusion problems discussed in \cite{bauschke17}, developing a more general SEG type method is of interest. Finally, we are intrigued to explore whether symplectic acceleration can be useful to derive a broader range of acceleration forms of algorithms that designed to solve either minimax problems or inclusion problems. 

Practically, creating efficient algorithms to find good values for $L_k$ and $\rho_k$ in Algorithm \ref{al:linesearchseg+}, or just $L_k$ as in Corollary \ref{coro:linesearchspeg+}, is a concern. Our LASSO problem tests indicate that the current line search for $\rho_k$ does not speed up calculations significantly. One suggestion from our analysis in Corollary \ref{coro:linesearchseg+constant} is that $\rho_k$ shouldn't greatly exceed the comonotone index. Therefore, improving when to stop adjusting $\rho_k$	could be beneficial.

\appendix

\section{Convergence of Symplectic Extra-gradient Method}
\label{app:segrate}
To proof convergence of the symplectic extra-gradient method, we need to study the following Lyapunov function:
\begin{equation}
	\label{eq:lyapunovseg}
	\E(k)=\frac{A_k}{2}\norm{F(z_k)}^2+B_k\inner{F(z_k),z_k-u_k}.
\end{equation}
The first thing we should do is to find out the sufficient condition to ensure that $\E$ is a Lyapunov function, i. e. $\{\E(k)\}$ is non-increasing. The conditions \eqref{eq:thm1assumption1}-\eqref{eq:thm1assumption3} are based on the conditions in Lemma 2 in \cite{yoon21}.

\begin{theorem}
	\label{thm:lyapunov}
	Let $\{z_\frac{k}{2}\}$, $\{\tilde{z}_k\}$ and $\{u_k\}$ be the sequences generated by \eqref{eq:seg}. If\begin{align}
		A_k&=\frac{B_k\beta_k}{\alpha_k},\label{eq:thm1assumption1}\\
		B_{k+1}&=\frac{B_k}{1-\alpha_k},\label{eq:thm1assumption2}\\
		\beta_{k+1}+2C_k\alpha_{k+1}&=\frac{\beta_k\alpha_{k+1}(1-L^2\beta_k^2-\alpha_k^2)}{\alpha_k(1-\alpha_k)(1-L^2\beta_k^2)},\label{eq:thm1assumption3}\\
		\beta_k&\in(0,\frac{1}{L}),\quad\forall k\geqslant 0.\label{eq:thm1assumption4}\,
	\end{align}
	then the sequence $\{\E(k)\}$ is non-increasing. 
\end{theorem}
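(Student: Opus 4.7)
The plan is to directly compute the difference $\E(k+1)-\E(k)$, substitute the iteration identities for $\tilde z_{k+1}$, $z_{k+1}$, $u_{k+1}$, and then combine monotonicity of $F$ with its $L$-Lipschitz continuity to reduce everything to a quadratic form in the three gradient evaluations $F(z_k)$, $F(z_{k+\frac12})$, $F(z_{k+1})$ whose coefficients can be checked to be non-positive using the parameter conditions \eqref{eq:thm1assumption1}--\eqref{eq:thm1assumption4}. Assumption \eqref{eq:thm1assumption3} is engineered so that the coefficient of $\norm{F(z_{k+1})}^2$ in the final bound vanishes (or is non-positive), in the same spirit as Lemma~2 in \cite{yoon21}.

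First I would rewrite the cross-iteration terms via the identities $u_{k+1}-u_k=-C_kF(z_{k+1})$ and $z_{k+1}-u_k=(1-\alpha_k)(z_k-u_k)-\beta_k F(z_{k+\frac12})$, yielding
\[
B_{k+1}\inner{F(z_{k+1}),z_{k+1}-u_{k+1}}=B_{k+1}(1-\alpha_k)\inner{F(z_{k+1}),z_k-u_k}-B_{k+1}\beta_k\inner{F(z_{k+1}),F(z_{k+\frac12})}+B_{k+1}C_k\norm{F(z_{k+1})}^2.
\]
By \eqref{eq:thm1assumption2}, $B_{k+1}(1-\alpha_k)=B_k$, so the first summand combines with $-B_k\inner{F(z_k),z_k-u_k}$ into $B_k\inner{F(z_{k+1})-F(z_k),z_k-u_k}$. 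Using $z_k-u_k=-\alpha_k^{-1}(z_{k+1}-z_k+\beta_k F(z_{k+\frac12}))$ and the balance \eqref{eq:thm1assumption1} (which is $B_k\beta_k/\alpha_k=A_k$), this term splits into $-\frac{B_k}{\alpha_k}\inner{F(z_{k+1})-F(z_k),z_{k+1}-z_k}-A_k\inner{F(z_{k+1})-F(z_k),F(z_{k+\frac12})}$. Monotonicity of $F$ makes the first piece non-positive; this is the only place monotonicity enters.

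After this reduction, $\Delta\E:=\E(k+1)-\E(k)$ is bounded above by a quadratic form in $a:=F(z_k)$, $b:=F(z_{k+\frac12})$, $c:=F(z_{k+1})$ with explicit coefficients in $A_k,A_{k+1},B_k,B_{k+1},\beta_k,C_k,\alpha_k$. The remaining step is to apply the Lipschitz inequality $\norm{c-b}^2\le L^2\beta_k^2\norm{a-b}^2$ (which follows from $z_{k+1}-z_{k+\frac12}=\beta_k(a-b)$) to eliminate $\inner{c,b}$ from the bound, after which the expression can be regrouped as
\[
\Delta\E\;\le\;\tfrac{1}{2}\bigl(A_{k+1}+2B_{k+1}C_k-B_{k+1}\beta_k-A_k\bigr)\norm{c}^2-\gamma_k\norm{a-b}^2-\tfrac{B_{k+1}\beta_k}{2}\norm{b}^2,
\]
where $\gamma_k$ is non-negative under \eqref{eq:thm1assumption4}. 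The main obstacle is verifying that the coefficient of $\norm{c}^2$ is non-positive: substituting $A_k=B_k\beta_k/\alpha_k$, $A_{k+1}=B_k\beta_{k+1}/[(1-\alpha_k)\alpha_{k+1}]$, $B_{k+1}=B_k/(1-\alpha_k)$ and clearing denominators reduces this to showing $\beta_{k+1}+2C_k\alpha_{k+1}\le\beta_k\alpha_{k+1}(1-L^2\beta_k^2-\alpha_k^2)/[\alpha_k(1-\alpha_k)(1-L^2\beta_k^2)]$, which is exactly \eqref{eq:thm1assumption3} (with equality). This bookkeeping, together with $\beta_k<1/L$ ensuring positivity of the denominator $1-L^2\beta_k^2$, completes the proof that $\E(k+1)\le\E(k)$.
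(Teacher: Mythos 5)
The first two-thirds of your argument tracks the paper's proof exactly: the three-way splitting of the inner-product difference, the use of \eqref{eq:thm1assumption2} to telescope $B_{k+1}(1-\alpha_k)\inner{F(z_{k+1}),z_k-u_k}$ against $-B_k\inner{F(z_k),z_k-u_k}$, the substitution $z_k-u_k=-\alpha_k^{-1}(z_{k+1}-z_k+\beta_kF(z_{k+\frac12}))$, and the single application of monotonicity are all correct and are the same steps the paper takes.

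The gap is in the final regrouping. Writing $a=F(z_k)$, $b=F(z_{k+\frac12})$, $c=F(z_{k+1})$, at that stage one has
\[
\Delta\E\;\le\;\Bigl(\tfrac{A_{k+1}}{2}+B_{k+1}C_k\Bigr)\norm{c}^2-\tfrac{A_k}{2}\norm{a}^2-(A_k+B_{k+1}\beta_k)\inner{c,b}+A_k\inner{a,b},
\]
and your plan is to expand \emph{all} of $-\inner{c,b}$ by polarization, bound $\norm{c-b}^2\le L^2\beta_k^2\norm{a-b}^2$, and absorb $\inner{a,b}$ into $\norm{a-b}^2$. Carrying this out, the coefficient of $\norm{c}^2$ in your claimed bound becomes, after substituting \eqref{eq:thm1assumption1}--\eqref{eq:thm1assumption3}, equal to $\dfrac{A_k\alpha_k(1-L^2\beta_k^2-\alpha_k)}{2(1-\alpha_k)^2(1-L^2\beta_k^2)}$, which is non-positive only when $\alpha_k\ge 1-L^2\beta_k^2$; equivalently, your reduction ``$\beta_{k+1}+2C_k\alpha_{k+1}\le\beta_k\alpha_{k+1}/\alpha_k$'' is \emph{not} what \eqref{eq:thm1assumption3} provides, since $\tfrac{1-L^2\beta_k^2-\alpha_k^2}{(1-\alpha_k)(1-L^2\beta_k^2)}\le1$ holds only in that same regime. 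Meanwhile your $\gamma_k=\tfrac12\bigl[A_k(1-L^2\beta_k^2)-B_{k+1}\beta_kL^2\beta_k^2\bigr]$ is non-negative precisely when $\alpha_k\le 1-L^2\beta_k^2$ — it is \emph{not} implied by \eqref{eq:thm1assumption4} alone. So the two sign conditions you need are mutually exclusive except on the knife edge $\alpha_k=1-L^2\beta_k^2$, which the theorem does not assume; for generic parameters your bound has a term of the wrong sign and the argument does not close.

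The fix is to be less aggressive with the cross term: apply the Lipschitz bound only to eliminate $\inner{a,b}$ (which reintroduces $+\tfrac{A_k}{L^2\beta_k^2}\inner{c,b}$ and cancels the $\norm{a}^2$ terms), and keep the resulting quadratic form $P\norm{c}^2+Q\inner{c,b}+R\norm{b}^2$ in $b$ and $c$ intact. One then checks that \eqref{eq:thm1assumption3} is engineered not to make $P$ vanish but to make the discriminant vanish: with $u=L^2\beta_k^2$ one finds $P=-\tfrac{A_k(1-u-\alpha_k)^2}{2u(1-\alpha_k)^2(1-u)}$, $R=-\tfrac{A_k(1-u)}{2u}$, $Q=\tfrac{A_k(1-u-\alpha_k)}{u(1-\alpha_k)}$, so $Q^2=4PR$ and the whole expression is $-\tfrac{A_k}{2u}\norm{\tfrac{1-u-\alpha_k}{(1-\alpha_k)\sqrt{1-u}}\,c-\sqrt{1-u}\,b}^2\le0$ regardless of the sign of $1-u-\alpha_k$. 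That perfect-square structure is exactly what your early splitting of $\inner{c,b}$ destroys.
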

\begin{proof}
	Step 1: Dividing the difference $\E(k+1)-\E(k)$ into 2 parts.\begin{align*}
		\E(k+1)-\E(k)=&\  \underbrace{\frac{A_{k+1}}{2}\norm{F(z_{k+1})^2}-\frac{A_k}{2}\norm{F(z_k)}^2}_{\text{I}}\\
		&\ +\underbrace{B_{k+1}\inner{F(z_{k+1}),z_{k+1}-u_{k+1}}-B_k\inner{F(z_k),z_k-u_k}}_{\text{II}}.
	\end{align*}
	
	Step 2: Estimate the upper bound of II.  II can be divided into the following three pieces.\begin{align*}
		&\ B_{k+1}\inner{F(z_{k+1}),z_{k+1}-u_{k+1}}-B_k\inner{F(z_k),z_k-u_k}\\
		=&\ (B_{k+1}-B_k)\inner{F(z_{k+1}),z_{k+1}-u_{k+1}}+B_k\inner{F(z_{k+1}),z_{k+1}-z_k-u_{k+1}+u_k}\\
		&+B_k\inner{F(z_{k+1})-F(z_k),z_k-u_k}.
	\end{align*}
	Since\[
	z_{k+1}=\alpha_ku_k+(1-\alpha_k)z_k-\beta_kF(x_{k+\frac{1}{2}}),
	\] 
	we have
	\begin{equation}
		B_k\inner{F(z_{k+1})-F(z_k),z_k-u_k}=B_k\inner{F(z_{k+1})-F(z_k),-\frac{1}{\alpha_k}(z_{k+1}-z_k)-\frac{\beta_k}{\alpha_k}F(z_{k+\frac{1}{2}})}.
		\label{eq:thm1_a}
	\end{equation}
	Also, we have
	\begin{equation}
		\begin{split}
			&\ B_k\inner{F(z_{k+1}),z_{k+1}-z_k-u_{k+1}+u_k}\\
			=&\ B_k\inner{F(z_{k+1}),\frac{\alpha_k}{1-\alpha_k}(u_k-z_{k+1})-\frac{\beta_k}{1-\alpha_k}F(z_{k+\frac{1}{2}})}+B_kC_k\norm{F(z_{k+1})}^2.
		\end{split}
		\label{eq:thm1_b}
	\end{equation}
	In addition, we have
	\begin{equation}
		(B_{k+1}-B_k)\inner{F(z_{k+1}),z_{k+1}-u_{k+1}}=(B_{k+1}-B_k)C_k\norm{F(z_{k+1})}^2+(B_{k+1}-B_k)\inner{F(z_{k+1}),z_{k+1}-u_k}.
		\label{eq:thm1_c}
	\end{equation}
	By summing \eqref{eq:thm1_a}, \eqref{eq:thm1_b} and \eqref{eq:thm1_c} and using \eqref{eq:thm1assumption2}, we obtain\begin{align*}
		\text{II}=&\ B_{k+1}C_k\norm{F(x_{k+1})}^2-\frac{B_k}{a_k}\inner{F(z_{k+1})-F(z_k),z_{k+1}-z_k}\\
		&\ -\left(\frac{B_k\beta_k}{\alpha_k}+\frac{B_k\beta_k}{1-\alpha_k}\right)\inner{F(z_{k+1}),F(z_{k+\frac{1}{2}})}+\frac{B_k\beta_k}{\alpha_k}\inner{F(z_k),F(z_{k+\frac{1}{2}})}.
	\end{align*}
	Since $F$ is $L-$Lipschitz continuous, we have
	\begin{align*}
		\norm{F(z_{k+1})-F(z_{k+\frac{1}{2}})}^2&\leqslant L^2\beta_k^2\norm{F(z_{k+\frac{1}{2}})-F(z_k)}^2.
	\end{align*}
	Because of the equality\[
	-\inner{F(z_{k+1}),F(z_{k+\frac{1}{2}})}=\frac{1}{2}\left[\norm{F(z_{k+1})-F(z_{k+\frac{1}{2}})}^2-\norm{F(z_{k+1})}^2-\norm{F(z_{k+\frac{1}{2}})}^2\right],
	\]
	we have\begin{align*}
		\text{II}\leqslant& \left(B_{k+1}C_k-\frac{B_k\beta_k}{2\alpha_kL^2\beta_k^2}\right)\norm{F(z_{k+1})}^2+\frac{B_k\beta_k}{2\alpha_k}\norm{F(z_k)}^2+\frac{B_k\beta_k}{2\alpha_k}(1-\frac{1}{L^2\beta_k^2})\norm{F(z_{k+\frac{1}{2}})}^2\\
		&-\left(\frac{B_k\beta_k}{\alpha_k}(1-\frac{1}{L^2\beta_k^2})+\frac{B_k\beta_k}{1-\alpha_k}\right)\inner{F(z_{k+\frac{1}{2}}),F(z_{k+1})}.
	\end{align*}

	Step 3: Estimate the upper bound of $\E(k+1)-\E(k)$. By previous estimation and the equality \eqref{eq:thm1assumption1}, we have\begin{align*}
		\E(k+1)-\E(k)\leqslant &\  \left(\frac{A_{k+1}}{2}+B_{k+1}C_k-\frac{B_k\beta_k}{2\alpha_kL^2\beta_k^2}\right)\norm{F(z_{k+1})}^2+\frac{B_k\beta_k}{2\alpha_k}(1-\frac{1}{L^2\beta_k^2})\norm{F(z_{k+\frac{1}{2}})}^2\\
		&\ -\left(\frac{B_k\beta_k}{\alpha_k}(1-\frac{1}{L^2\beta_k^2})+\frac{B_k\beta_k}{1-\alpha_k}\right)\inner{F(z_{k+\frac{1}{2}}),F(z_{k+1})}.
	\end{align*}
	By \eqref{eq:thm1assumption1}, \eqref{eq:thm1assumption2} and \eqref{eq:thm1assumption3}, we have\begin{align*}
		\frac{A_{k+1}}{2}+B_{k+1}C_k & = \frac{B_{k+1}\beta_{k+1}}{2\alpha_{k+1}}+B_{k+1}C_k \\
		& = \frac{B_k\beta_{k+1}}{2\alpha_{k+1}(1-\alpha_k)}+\frac{B_kC_k}{1-\alpha_k}\\
		& =\frac{B_k(\beta_{k+1}+2C_k\alpha_{k+1})}{2\alpha_{k+1}(1-\alpha_k)}\\
		& =\frac{B_k\beta_k(1-L^2\beta_k^2-\alpha_k^2)}{2\alpha_k(1-\alpha_k)^2(1-L^2\beta_k^2)}\\
		&= \frac{A_k(1-L^2\beta_k^2-\alpha_k^2)}{2(1-\alpha_k)^2(1-L^2\beta_k^2)}.
	\end{align*}
	Also by \eqref{eq:thm1assumption1}, we have\begin{align*}
		\frac{B_k\beta_k}{2\alpha_k}(1-\frac{1}{L^2\beta_k^2})&=\frac{A_k(L^2\beta_k^2-1)}{2L^2\beta_k^2},\\
		-\frac{B_k\beta_k}{\alpha_k}(1-\frac{1}{L^2\beta_k^2})+\frac{B_k\beta_k}{1-\alpha_k}&=\frac{A_k(1-L^2\beta_k^2-\alpha_k)}{2(1-\alpha_k)L^2\beta_k^2}.
	\end{align*}
	In conclusion, we have\begin{align*}
		\E(k+1)-\E(k) \leqslant&\ -\frac{A_k(1-L^2\beta_k^2-\alpha_k)^2}{2(1-\alpha_k)^2(1-L^2\beta_k^2)L^2\beta_k^2}\norm{F(z_{k+1})}^2 - \frac{A_k(1-L^2\beta_k^2)}{2L^2\beta_k^2}\norm{F(z_{k+\frac{1}{2}})}^2\\
		&\  +\frac{A_k(1-L^2\beta_k^2-\alpha_k)}{2(1-\alpha_k)L^2\beta_k^2}\inner{F(z_{k+\frac{1}{2}}),F(z_{k+1})}\\
		=&\ -\frac{A_k}{2L^2\beta_k^2}\norm{\frac{(1-L^2\beta_k^2-\alpha_k)}{(1-\alpha_k)\sqrt{1-L^2\beta_k^2}}F(z_{k+1})-\sqrt{1-L^2\beta_k^2}F(z_{k+\frac{1}{2}})}^2\\
		\leqslant&\ 0.
	\end{align*}
\end{proof}

However, unlike Theorem 2 in \cite{yoon21}, we can not directly estimate the upper bounds of $\norm{F(z_k)}^2,$ and $\inner{F(z_k),z_k-z^*}$. The main reason is that the lower bound of $E(k)$ is\[
\E(k)\geqslant\frac{A_k}{4}\norm{F(z_k)}^2+B_k\inner{F(z_k),z_k-z^*}-\frac{B_k^2}{A_k^2}\norm{u_k-z^*}^2,
\]
which leads to that both $\norm{F(z_k)}^2,$ and $\inner{F(z_k),z_k-z^*}$ are bounded by both $\norm{u_k-z^*}^2$ and $\norm{z_0-z^*}^2$. Because determining all classes of SEG that satisfies \eqref{eq:thm1assumption4} and admits $O(1/k^2)$ convergence rate is complicated, we first focus on the following instance of SEG, described in Algorithm \ref{al:segvary}.

\begin{algorithm}[ht]
	\label{al:segvary}
	\caption{SEG with Varying Step-size}
	\textbf{Input: }Operator $F$, $L\in(0,+\infty)$, \;
	\textbf{Initialization: } $z_0$, $u_0=z_0$\;
	\For{$k=0, 1, \cdots$}{
		$\tilde{z}_{k+1}=\dfrac{1}{k+r}z_k+\dfrac{k+r-1}{k+r}u_k$\;
		$z_{k+\frac{1}{2}}=\tilde{z}_{k+1}-\beta_k F(z_k)$\;
		$z_{k+1}=\tilde{z}_{k+1}-\beta_k F(z_{k+\frac{1}{2}})$\;
		$u_{k+1}=u_k-D\cdot\dfrac{L^2\beta_k^3}{2(k+r-1)(1-L^2\beta_k^2)}F(z_{k+1})$\;
		$\beta_{k+1}=\beta_k-(1+D)\cdot\dfrac{L^2\beta_k^3}{[(k+r)^2-1](1-L^2\beta_k^2)}$\;
	}
\end{algorithm}

Algorithm \ref{al:segvary} can be seen as SEG with $\alpha_k=\dfrac{1}{k+r}, C_k=D\cdot\dfrac{L^2\beta_k^3}{2(k+r-1)(1-L^2\beta_k^2)}$. The first thing we need to do is propose the sufficient condition for SEG such that \eqref{eq:thm1assumption4} hold.

\begin{lemma}
	\label{lem:1}
	Let $\{z_\frac{k}{2}\}$, $\{\tilde{z}_k\}$ and $\{u_k\}$ be the sequences generated by Algorithm \ref{al:segvary}. If 
	\begin{align}
		\beta_0&\in (0,\dfrac{1}{L}),\label{eq:lem1_a}\\
		-1<D&<\frac{r(r-1)(1-L^2\beta_0^2)}{(2r-1)L^2\beta_0^2}-1\label{eq:lem1_b}
	\end{align}
	hold, $\{\beta_k\}$ is a non-increasing sequence and $\beta_\infty=\lim\limits_{k\to\infty}\beta_k>0$.
\end{lemma}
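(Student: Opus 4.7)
The claim has two parts: monotonicity of $\{\beta_k\}$ in $(0,1/L)$ and a strictly positive limit. I would obtain both from a single induction on $k$ that simultaneously maintains $0 < \beta_k \leq \beta_0 < 1/L$, combined with an a priori bound on the cumulative decrement $\beta_0 - \beta_k$ that comes from a telescoping series.

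The inductive setup is essentially bookkeeping. At the base the bound $\beta_0 < 1/L$ holds by \eqref{eq:lem1_a}. Assuming $0 < \beta_j \leq \beta_0$ for all $j \leq k$, the factor $1-L^2\beta_k^2$ is positive, and $1+D > 0$ by \eqref{eq:lem1_b}, so the quantity subtracted from $\beta_k$ in the update rule is strictly positive; this gives $\beta_{k+1} < \beta_k \leq \beta_0$ and hence the non-increasing property, provided positivity is secured.

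Positivity (and the positive limit) I would extract by telescoping the recursion:
\[
\beta_0 - \beta_{k+1} \;=\; (1+D)\sum_{j=0}^{k} \frac{L^2\beta_j^3}{[(j+r)^2-1](1-L^2\beta_j^2)}.
\]
Since $t \mapsto t^3/(1-L^2 t^2)$ is increasing on $(0, 1/L)$ and the inductive hypothesis gives $\beta_j \leq \beta_0$, each summand is dominated by its value at $\beta_0$. The remaining scalar series is handled by the partial fraction
\[
\frac{1}{(j+r)^2-1} \;=\; \frac{1}{2}\left(\frac{1}{j+r-1} - \frac{1}{j+r+1}\right),
\]
which telescopes to $\frac{1}{2}\bigl(\frac{1}{r-1}+\frac{1}{r}\bigr) = \frac{2r-1}{2r(r-1)}$. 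Substituting yields
\[
\beta_0 - \beta_{k+1} \;\leq\; \frac{(1+D)\,L^2\beta_0^3}{1-L^2\beta_0^2}\cdot\frac{2r-1}{2r(r-1)},
\]
and the hypothesis \eqref{eq:lem1_b} makes the right-hand side strictly less than $\beta_0/2$; thus $\beta_{k+1} > 0$ (closing the induction) and, passing to the limit, $\beta_\infty \geq \beta_0/2 > 0$.

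The main obstacle is coupling the two halves of the induction cleanly: the bound $\beta_k \leq \beta_0$ is what keeps the denominator $1-L^2\beta_k^2$ bounded away from zero and is also what allows the summand-by-summand comparison with the $\beta_0$-value. No delicate estimate is needed beyond this monotone comparison and the partial-fraction computation; the factor of two built into \eqref{eq:lem1_b} relative to the sharp threshold $1+D < \frac{2r(r-1)(1-L^2\beta_0^2)}{(2r-1)L^2\beta_0^2}$ is precisely what yields the clean uniform lower bound $\beta_\infty \geq \beta_0/2$, which is convenient for any subsequent convergence analysis that relies on step-sizes being bounded away from zero.
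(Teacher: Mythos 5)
Your proposal is correct and follows essentially the same route as the paper: induction maintaining $0<\beta_k\leqslant\beta_0<1/L$, comparison of each summand with its value at $\beta_0$ via the monotonicity of $t\mapsto t^3/(1-L^2t^2)$ on $(0,1/L)$, and the telescoping partial-fraction sum $\sum_{i\geqslant 0}\tfrac{1}{(i+r-1)(i+r+1)}=\tfrac{2r-1}{2r(r-1)}$. Your evaluation of that sum is in fact the correct one --- the paper drops the factor of $2$ in the denominator at this step --- so your version yields the cleaner explicit bound $\beta_\infty\geqslant\beta_0/2$ while reaching the same conclusion.
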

\begin{proof}
	Without loss of generalization, we presume $L=1$ because the general case can be obtained by replacing $\beta_k$ by $L\beta_k$.  By the assumption, the difference $\beta_{k+1}-\beta_k$ is given as follows:\[
	\beta_{k+1}-\beta_k=-(1+D)\cdot\frac{\beta_k^3}{[(k+r)^2-1](1-\beta_k^2)}.
	\]
	
	Next, we verify that $\beta_k>0$ by using mathematical induction. We assume that $\beta_i>0$ is hold for all $i=0, \cdots, k$, then we have $\beta_{i+1}-\beta_i\leqslant 0, \forall i=0, \cdots, k$. Since the function $g(x)=\dfrac{x^3}{1-x^2}$ is non-increasing on $(0,1)$, we have\[
	\frac{\beta_i^3}{1-\beta_i^2}\leqslant\frac{\beta_0^3}{1-\beta_0^2}.
	\] 
	Thus\begin{align*}
		\beta_{k+1}-\beta_0 & =\sum_{i=0}^{k}\beta_{i+1}-\beta_i \\
		& \geqslant-(1+D)\cdot\sum_{i=0}^{k}\frac{\beta_0^3}{(i+r-1)(i+r-1)(1-\beta_0^2)}\\
		&\geqslant-\frac{(1+D)\beta_0^3}{(1-\beta_0^2)}\sum_{i=0}^{\infty}\frac{1}{(i+r-1)(i+r+1)}\\
		&=-\frac{(2r-1)(1+D)\beta_0^3}{r(r-1)(1-\beta_0^2)}.
	\end{align*}
	In conclusion, we have\[
	\beta_{k+1}\geqslant\left(1-\frac{(2r-1)(1+D)\beta_0^2}{r(r-1)(1-\beta_0^2)}\right)\beta_0.
	\]
\end{proof}
The following lemma is useful to show that $\norm{u_k-z^*}^2$ can be uniformly bounded by $\norm{z_0-z^*}^2$.

\begin{lemma}
	\label{lem:2}
	Let $\{a_k\}$ be a non-negative sequence. If there exists two positive constants $E_1, E_2$ such that \[
	\left(1-\frac{E_1}{(k+r)^2-1}\right)a_{k+1}\leqslant\frac{E_2}{(k+r-1)(k+r)}a_0+a_k,\quad E_1<r^2-1,
	\]
	the sequence $\{a_k\}$ can be uniformly bounded by $a_0$, i. e. there exists a positive constant $E_3$ such that\[
	a_k\leqslant E_3a_0,\quad\forall k\geqslant 1.
	\]
\end{lemma}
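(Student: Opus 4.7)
\textbf{Proof proposal for Lemma \ref{lem:2}.} The plan is to rewrite the recurrence in the form $a_{k+1}\le \alpha_k a_k + \beta_k a_0$ with $\alpha_k = 1+O(1/k^2)$ and $\beta_k = O(1/k^2)$, then use the standard trick of dividing by the cumulative product of the $\alpha_j$ to obtain a telescoping inequality. First I would exploit the factorization $(k+r)^2-1=(k+r-1)(k+r+1)$ to rewrite the hypothesis as
\[
a_{k+1}\le\frac{(k+r-1)(k+r+1)}{(k+r-1)(k+r+1)-E_1}\,a_k+\frac{E_2(k+r+1)}{\bigl[(k+r-1)(k+r+1)-E_1\bigr](k+r)}\,a_0.
\]
The assumption $E_1<r^2-1$ guarantees that the denominator $(k+r-1)(k+r+1)-E_1$ is bounded below by $(r-1)(r+1)-E_1>0$ for all $k\ge 0$, so both coefficients are positive. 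Setting $\alpha_k$ and $\beta_k$ to be these two coefficients, one has
\[
\alpha_k-1=\frac{E_1}{(k+r-1)(k+r+1)-E_1}=O\!\left(\frac{1}{k^2}\right),\qquad\beta_k=O\!\left(\frac{1}{k^2}\right).
\]

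Next I would define the partial product $P_k:=\prod_{j=0}^{k-1}\alpha_j$ with $P_0=1$. Because $\alpha_j-1$ is summable (comparison with a convergent $p$-series, $p=2$), the product $P_k$ converges to a finite limit $P_\infty\ge 1$, and in particular $P_k\le P_\infty$ for every $k$. Dividing the recurrence by $P_{k+1}$ yields
\[
\frac{a_{k+1}}{P_{k+1}}\le\frac{a_k}{P_k}+\frac{\beta_k}{P_{k+1}}\,a_0,
\]
and summing from $0$ to $k-1$ produces the telescoping bound
\[
\frac{a_k}{P_k}\le a_0+a_0\sum_{j=0}^{k-1}\frac{\beta_j}{P_{j+1}}.
\]
Since $\beta_j=O(1/j^2)$ and $1/P_{j+1}\le 1$, the series $\sum_{j\ge 0}\beta_j/P_{j+1}$ converges to some $S<\infty$, so $a_k\le P_\infty(1+S)\,a_0$, and we may take $E_3:=P_\infty(1+S)$.

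The only delicate point is verifying that $P_\infty$ and $S$ are genuinely finite rather than merely bounded on each finite window; this reduces entirely to checking the decay rate $O(1/k^2)$ of $\alpha_k-1$ and $\beta_k$, which in turn requires the strict inequality $E_1<r^2-1$ to keep the denominators from collapsing at small $k$. Once this is in place the rest is routine telescoping, and the constant $E_3$ depends only on $r$, $E_1$, and $E_2$.
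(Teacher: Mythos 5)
Your proposal is correct and follows essentially the same route as the paper: both rewrite the hypothesis as $a_{k+1}\leqslant\alpha_k a_k+\beta_k a_0$ with $\alpha_k-1$ and $\beta_k$ of order $1/k^2$ (using $(k+r)^2-1=(k+r-1)(k+r+1)$ and $E_1<r^2-1$ to keep denominators positive), and both conclude from the convergence of the infinite product $\prod_j\alpha_j$ together with the convergence of the series $\sum_j\beta_j$. Your telescoping via the partial products $P_k$ is just a tidier packaging of the paper's direct unrolling of the recursion (and it correctly retains the homogeneous term $P_k a_0$, which the paper's displayed unrolled bound silently drops), so there is no substantive difference.
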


\begin{proof}
	Deriving $1-\dfrac{E_1}{(k+r)^2-1}$ on both sides of the inequality in Lemma \ref{lem:2}, we have\[
	a_{k+1}\leqslant\frac{(k+r+1)E_2}{[(k+r-1)^2-E_1](k+r)}a_0+\frac{(k+r)^2-1}{(k+r)^2-1-E_1}a_k.
	\]
	Recursively using upper bound of $a_k, a_{k-1}, \cdots, a_1$, we have\[
	a_{k+1}\leqslant\sum_{i=0}^{k}\prod_{j=i+1}^{k}\frac{(j+r)^2-1}{(j+r)^2-1-E_1}\frac{(i+r+1)E_2}{[(i+r)^2-1-E_1](i+r)}a_0,
	\]
	where $\prod\limits_{j=k+1}^{k}\dfrac{(j+r)^2-1}{(j+r)^2-1-E_1}=1$. Since\[
	\prod_{j=i+1}^{k}\frac{(j+r)^2-1}{(j+r)^2-1-E_1}\leqslant\prod_{j=0}^{\infty}\frac{(j+r)^2-1}{(j+r)^2-1-E_1}<\infty,
	\]
	then we have\[
	a_{k+1}\leqslant E_4\frac{(i+r+1)E_2}{[(i+r)^2-1-E_1](i+r)}a_0,
	\]
	where\[
	E_4=\prod_{j=0}^{\infty}\frac{(j+r)^2-1}{(j+r)^2-1-E_1}.
	\]
	Also, because\[
	\sum_{i=0}^{k}\frac{(i+r+1)E_2}{[(i+r)^2-1-E_1](i+r)}\leqslant\sum_{i=0}^{\infty}\frac{(i+r+1)E_2}{[(i+r)^2-1-E_1](i+r)}<\infty,
	\]
	we have\[
	a_{k+1}\leqslant E_3 a_0
	\]
	where \[
	E_3=E_4E_2\sum_{i=0}^{\infty}\frac{i+r+1}{[(i+r)^2-1-E_1](i+r)}.
	\]
\end{proof}
With Lemma \ref{lem:1} and Lemma \ref{lem:2}, we can show that Algorithm \ref{al:segvary} exhibits $O(1/k^2)$ convergence rates.
\begin{theorem}
Let $\{z_\frac{k}{2}\}$, $\{\tilde{z}_k\}$ and $\{u_k\}$ be the sequences generated by Algorithm \ref{al:segvary}. If \eqref{eq:lem1_a} and \eqref{eq:lem1_b} hold, we have\begin{align*}
	\norm{F(z_k)}^2 & \leqslant O\left(\frac{\dist(z_0,F)^2}{(k+r-1)(k+r)}\right), \\
	\inner{F(z_k),z_k-z^*}&\leqslant O\left(\frac{\norm{z_0-z^*}^2}{k+r-1}\right).
\end{align*}
\end{theorem}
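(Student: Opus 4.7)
My plan is to combine Theorem \ref{thm:lyapunov} with the auxiliary Lemma \ref{lem:2}, using the former to control $\E(k)$ and the latter to secure a uniform bound on $\norm{u_k - z^*}^2$, from which both advertised rates follow by a lower bound on the Lyapunov function.

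First I would show that Algorithm \ref{al:segvary} fits the hypotheses of Theorem \ref{thm:lyapunov}. Setting $B_0 = 1$ and using $B_{k+1} = B_k/(1-\alpha_k)$, $A_k = B_k\beta_k/\alpha_k$ with $\alpha_k = 1/(k+r)$, a direct telescoping gives $B_k = (k+r-1)/(r-1)$ and $A_k = \beta_k(k+r-1)(k+r)/(r-1)$. A short algebraic check (clearing denominators and collecting $L^2\beta_k^2$ terms) shows that the explicit update for $\beta_{k+1}$ in Algorithm \ref{al:segvary} is precisely the recurrence \eqref{eq:thm1assumption3}. Lemma \ref{lem:1}, via \eqref{eq:lem1_a}--\eqref{eq:lem1_b}, guarantees $\beta_k \in (\beta_\infty, \beta_0) \subset (0, 1/L)$, so \eqref{eq:thm1assumption4} also holds. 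Theorem \ref{thm:lyapunov} therefore yields $\E(k) \le \E(0) = \frac{A_0}{2}\norm{F(z_0)}^2 \le \frac{A_0 L^2}{2}\norm{z_0-z^*}^2$, using $L$-Lipschitz continuity and $F(z^*) = 0$.

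Next I would lower-bound $\E(k)$. Splitting $z_k - u_k = (z_k - z^*) - (u_k - z^*)$, applying Cauchy--Schwarz and Young's inequality to the term $B_k\inner{F(z_k), z^* - u_k}$, and using monotonicity of $F$ (so that $B_k\inner{F(z_k), z_k - z^*} \ge 0$) gives
\[
\frac{A_k}{4}\norm{F(z_k)}^2 + B_k\inner{F(z_k), z_k - z^*} \le \E(k) + \frac{B_k^2}{A_k}\norm{u_k - z^*}^2.
\]
In view of the asymptotics $A_k \asymp (k+r-1)(k+r)$, $B_k \asymp k+r-1$, both claimed rates drop out of this display the moment one knows $\norm{u_k - z^*}^2 \le E_3 \norm{z_0-z^*}^2$ for some constant $E_3$ independent of $k$.

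Establishing this uniform bound on $\norm{u_k - z^*}^2$ is the main obstacle. From $u_{k+1} - u_k = -C_k F(z_{k+1})$ I would write
\[
\norm{u_{k+1} - z^*}^2 = \norm{u_k - z^*}^2 - 2C_k\inner{F(z_{k+1}), u_{k+1} - z^*} - C_k^2\norm{F(z_{k+1})}^2,
\]
bound the cross term by Cauchy--Schwarz and Young's inequality with the sharp weight $\lambda_k = 2C_k B_{k+1}/A_{k+1}$, and substitute the lower bound of the previous paragraph (at index $k+1$) to eliminate $\norm{F(z_{k+1})}^2$. Using $C_k \asymp 1/(k+r-1)$ together with the explicit formulas for $A_{k+1}$, $B_{k+1}$, the resulting inequality collapses to exactly the form
\[
\Bigl(1 - \frac{E_1}{(k+r)^2 - 1}\Bigr)\norm{u_{k+1}-z^*}^2 \le \norm{u_k - z^*}^2 + \frac{E_2}{(k+r-1)(k+r)}\norm{z_0-z^*}^2
\]
required by Lemma \ref{lem:2}, with $E_1$ proportional to $DL^2\beta_0^3/[\beta_\infty(1-L^2\beta_0^2)]$ and $E_2$ a second explicit constant. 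The delicate point is verifying the hypothesis $E_1 < r^2 - 1$: this requires combining the lower bound on $\beta_\infty$ extracted in the proof of Lemma \ref{lem:1} with the ceiling on $D$ imposed by \eqref{eq:lem1_b}, which is precisely calibrated to leave room for this inequality. Once $E_1 < r^2 - 1$ is verified, Lemma \ref{lem:2} delivers $\norm{u_k - z^*}^2 \le E_3\norm{z_0-z^*}^2$; substituting into the lower bound on $\E(k)$ and taking infimum over $z^* \in \zero(F)$ (for the first estimate only) yields both asserted convergence rates.
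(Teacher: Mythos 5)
Your proposal follows essentially the same route as the paper: verify that Algorithm \ref{al:segvary} satisfies the hypotheses of Theorem \ref{thm:lyapunov} (with $A_k\propto\beta_k(k+r-1)(k+r)$, $B_k\propto k+r-1$) to get $\E(k)\leqslant\E(0)$, lower-bound $\E(k)$ by $\frac{A_k}{4}\norm{F(z_k)}^2+B_k\inner{F(z_k),z_k-z^*}-\frac{B_k^2}{A_k}\norm{u_k-z^*}^2$ using monotonicity, and close the loop with Lemma \ref{lem:2} to bound $\norm{u_k-z^*}$ uniformly by $\norm{z_0-z^*}$. The only substantive deviation is that the paper feeds Lemma \ref{lem:2} the sequence $a_k=\norm{u_k-z^*}$ via the triangle inequality and $\sqrt{a+b}\leqslant\sqrt{a}+\sqrt{b}$ (yielding $E_1=r(r-1)/(2r-1)$), whereas you feed it $a_k=\norm{u_k-z^*}^2$ via Young's inequality, which costs roughly a factor of two in $E_1$ but still satisfies $E_1<r^2-1$ for $r>1$, so both variants land in the regime where Lemma \ref{lem:2} applies.
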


\begin{proof}
	Let $B_0=1$. By \eqref{eq:thm1assumption1} and \eqref{eq:thm1assumption2}, we have $B_k=k+r-1, A_k=\beta_k(k+r-1)(k+r)$. By Theorem \ref{thm:lyapunov}, we have\[
\E(k)\leqslant\E(0)=\frac{(r-1)r\beta_0}{2}\norm{F(z_0)}^2\leqslant\frac{(r-1)rL^2\beta_0}{2}\norm{z_0-z^*}^2.
\]
The lower bound of $\E(k)$ can be estimated as followed.\begin{align*}
	\E(k)=&\ \frac{A_k}{2}\norm{F(z_k)}^2+B_k\inner{F(z_k),z_k-z^*}+B_k\inner{F(z_k),z^*-u_k} \\
	\geqslant &\ \frac{A_k}{2}\norm{F(z_k)}^2+B_k\inner{F(z_k),z_k-z^*}\\
	&+\frac{1}{2}\left[\norm{\sqrt{\frac{A_k}{2}}F(z_k)-\sqrt{\frac{2B_k^2}{A_k}}(u_k-z^*)}^2-\frac{A_k}{2}\norm{F(z_k)}^2-\frac{2B_k^2}{A_k}\norm{u_k-z^*}^2\right]\\
	\geqslant&\ \frac{A_k}{4}\norm{F(z_k)}^2+B_k\inner{F(z_k),z_k-z^*}-\frac{B_k^2}{A_k}\norm{u_k-z^*}^2.
\end{align*}
Thus, we have\begin{align*}
	\norm{F(z_k)}^2 & \leqslant \frac{2(r-1)rL^2\beta_0}{\beta_k(k+r-1)(k+r)}\norm{z_0-z^*}^2+\frac{4}{\beta_k^2(k+r)^2}\norm{u_k-z^*}^2,\\
	\inner{F(z_k),z_k-z^*} & \leqslant \frac{(r-1)rL^2\beta_0}{2(k+r-1)}\norm{z_0-z^*}^2+\frac{1}{\beta_k(k+r)}\norm{u_0-z^*}^2.
\end{align*}
Since $\beta_k\geqslant\beta_\infty>0$, the remaining problem is showing $\norm{u_k-z}$ can be uniformly bounded by $\norm{u_0-z}=\norm{z_0-z^*}$. By triangle inequality, we have\[
\norm{u_{k+1}-z^*}\leqslant |C_k|\norm{F(z_{k+1})}+\norm{u_k-z^*}.
\]
By the definition of $C_k$ and \eqref{eq:lem1_b}, we have\[
|C_k|\leqslant\frac{r(r-1)(1-L^2\beta_0^2)}{(2r-1)L^2\beta_0^2}\cdot\frac{L^2\beta_k^3}{2(k+r-1)(1-L^2\beta_k^3)}\leqslant\frac{r(r-1)}{(4r-2)(k+r-1)}\beta_k.
\]
Also by $\sqrt{a+b}\leqslant\sqrt{a}+\sqrt{b}, \forall a>0, b>0$, we have\[
\norm{u_{k+1}-z^*}\leqslant\frac{r(r-1)}{(2r-1)[(k+r)^2-1]}\norm{u_{k+1}-z^*}+\frac{r^{\frac{3}{2}}(r-1)^{\frac{3}{2}}L\beta_0}{\sqrt{2}(2r-1)(k+r-1)(k+r)}\norm{u_0-z^*}+\norm{u_k-z^*}.
\]
Let $E_1=\dfrac{r(r-1)}{2r-1}, E_2=\dfrac{r^{\frac{3}{2}}(r-1)^{\frac{3}{2}}L\beta_0}{\sqrt{2}(2r-1)}, a_k=\norm{u_k-z^*}$. By using Lemma \ref{lem:2}, we have $\norm{u_k-z^*}\leqslant E_3\norm{z_0-z^*}$. 

In conclusion, we have\begin{align*}
	\norm{F(z_k)}^2 & \leqslant O\left(\frac{\norm{z_0-z^*}^2}{(k+r-1)(k+r)}\right), \\
	\inner{F(z_k),z_k-z^*}&\leqslant O\left(\frac{\norm{z_0-z^*}^2}{k+r-1}\right).
\end{align*}
By taking infimum respect to all $z^*\in\zero(F)$ in the first above inequality, we obtain the desired results.

\end{proof}

Since the step-size of Nesterov's accelerated gradient method can be constant, we also discuss the SEG with constant step-size. It is easy to show that if $\alpha_k=\dfrac{1}{k+r}, C_k=-\dfrac{L^2\beta^3}{(k+r-1)(1-L^2\beta_k^2)}$, $\beta_k\equiv\beta$. In conclusion, we obtain the SEG with constant step-size, described in Algorithm \ref{al:segconstant}.

\begin{algorithm}[ht]
	\label{al:segconstant}
	\caption{SEG with Constant Step-size}
	\textbf{Input: }Operator $F$, $L\in(0,+\infty)$, $ \rho\in(-\dfrac{1}{2\rho},+\infty)$\;
	\textbf{Initialization: } $z_0$, $u_0=z_0$\;
	\For{$k=0, 1, \cdots$}{
		$\tilde{z}_{k+1}=\dfrac{1}{k+r}z_k+\dfrac{k+r-1}{k+r}u_k$\;
		$z_{k+\frac{1}{2}}=\tilde{z}_{k+1}-\beta F(z_k)$\;
		$z_{k+1}=\tilde{z}_{k+1}-\beta F(z_{k+\frac{1}{2}})$\;
		$u_{k+1}=u_k+\dfrac{L^2\beta^3}{2(k+r-1)(1-L^2\beta^2)}F(z_{k+1})$\;
	}
\end{algorithm}

The convergence results of Algorithm \ref{al:segconstant} is given in Corollary \ref{coro:segconstant}.

\begin{corollary}
	\label{coro:segconstant}
	Let $\{z_k\}$, $\{z_{k+\frac{1}{2}}\}$, $\{\tilde{z}_k\}$ and $\{u_k\}$ be the sequences generated by Algorithm \ref{al:segconstant} with $\beta\in(0,\dfrac{1}{L})$. Then we have\begin{align*}
		\norm{F(z_k)}^2 & \leqslant O\left(\frac{\dist(z_0,\zero(F))^2}{(k+r-1)(k+r)}\right), \\
		\inner{F(z_k),z_k-z^*}&\leqslant O\left(\frac{\norm{z_0-z^*}^2}{k+r-1}\right),\quad\forall z^*\in\zero(F).
	\end{align*}
\end{corollary}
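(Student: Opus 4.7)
The plan is to recognize Algorithm \ref{al:segconstant} as the special case of the SEG framework in which the sequence $\{\beta_k\}$ from \eqref{eq:thm1assumption3} happens to be constant, and then to reuse the entire machinery built in Theorem \ref{thm:lyapunov} together with Lemma \ref{lem:2} essentially verbatim. First I would verify that the choice $\alpha_k=\tfrac{1}{k+r}$, $\beta_k\equiv\beta$, and $C_k=-\tfrac{L^2\beta^3}{2(k+r-1)(1-L^2\beta^2)}$ satisfies hypotheses \eqref{eq:thm1assumption1}--\eqref{eq:thm1assumption4}: condition \eqref{eq:thm1assumption4} is precisely the assumption $\beta\in(0,1/L)$; setting $B_0=1$ in \eqref{eq:thm1assumption2} yields $B_k=k+r-1$ and then \eqref{eq:thm1assumption1} gives $A_k=\beta(k+r-1)(k+r)$; and a direct substitution of $\alpha_k=1/(k+r)$, $\beta_{k+1}=\beta_k=\beta$ into \eqref{eq:thm1assumption3} reduces to an identity that pins down exactly the stated $C_k$. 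This justifies calling the resulting $\E(k)=\tfrac{A_k}{2}\norm{F(z_k)}^2+B_k\inner{F(z_k),z_k-u_k}$ a Lyapunov function, so by Theorem \ref{thm:lyapunov} we have $\E(k)\leqslant\E(0)=\tfrac{(r-1)r\beta}{2}\norm{F(z_0)}^2\leqslant\tfrac{(r-1)rL^2\beta}{2}\norm{z_0-z^*}^2$ for every $z^*\in\zero(F)$.

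Second, I would reproduce the completing-the-square lower bound used in the proof of the preceding theorem to obtain
\[
\E(k)\geqslant \frac{A_k}{4}\norm{F(z_k)}^2+B_k\inner{F(z_k),z_k-z^*}-\frac{B_k^2}{A_k}\norm{u_k-z^*}^2,
\]
and substitute $A_k=\beta(k+r-1)(k+r)$, $B_k=k+r-1$ to isolate $\norm{F(z_k)}^2$ and $\inner{F(z_k),z_k-z^*}$ in terms of $\norm{z_0-z^*}^2$ and $\norm{u_k-z^*}^2$. Because $\beta$ is a fixed positive constant (not decaying as in the varying case), the prefactor $1/\beta_k$ is automatically bounded, which actually simplifies the constant tracking compared with the varying-step-size theorem.

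The remaining substantive step, and the main obstacle, is showing that $\norm{u_k-z^*}$ is uniformly bounded by a constant multiple of $\norm{z_0-z^*}$. For this I would mimic the argument built after Lemma \ref{lem:2}: from $u_{k+1}=u_k+\tfrac{L^2\beta^3}{2(k+r-1)(1-L^2\beta^2)}F(z_{k+1})$, the triangle inequality and $\sqrt{a+b}\leqslant\sqrt{a}+\sqrt{b}$ yield
\[
\norm{u_{k+1}-z^*}\leqslant \frac{c_1}{(k+r)^2-1}\norm{u_{k+1}-z^*}+\frac{c_2}{(k+r-1)(k+r)}\norm{u_0-z^*}+\norm{u_k-z^*},
\]
for explicit constants $c_1,c_2$ depending only on $r,L,\beta$, where the $L^2\beta^2<1$ assumption keeps $c_1$ finite. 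Provided $c_1<r^2-1$, which can be verified using $\beta<1/L$ together with $(k+r-1)(k+r)\geqslant(k+r)^2-1$, Lemma \ref{lem:2} applied to $a_k=\norm{u_k-z^*}$ furnishes the uniform bound $\norm{u_k-z^*}\leqslant E_3\norm{z_0-z^*}$.

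Finally, plugging this uniform bound back into the inequalities of the second step delivers
\[
\norm{F(z_k)}^2\leqslant O\!\left(\frac{\norm{z_0-z^*}^2}{(k+r-1)(k+r)}\right),\qquad \inner{F(z_k),z_k-z^*}\leqslant O\!\left(\frac{\norm{z_0-z^*}^2}{k+r-1}\right),
\]
and taking the infimum over $z^*\in\zero(F)$ in the first inequality produces the stated bound with $\dist(z_0,\zero(F))^2$ in the numerator. The only delicate point to watch is that the constant-step-size case corresponds to the boundary value of the admissible $D$ range in Lemma \ref{lem:1}, so the bound $c_1<r^2-1$ must be checked directly from the explicit form of $C_k$ rather than quoted from Lemma \ref{lem:1}; everything else is a transcription of the previous proof with $\beta_k$ replaced by $\beta$.
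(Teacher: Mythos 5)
Your overall strategy is exactly the one the paper intends (the paper states Corollary \ref{coro:segconstant} without proof, as a transcription of the preceding theorem): verify \eqref{eq:thm1assumption1}--\eqref{eq:thm1assumption4} for $\alpha_k=\tfrac{1}{k+r}$, $\beta_k\equiv\beta$, $C_k=-\tfrac{L^2\beta^3}{2(k+r-1)(1-L^2\beta^2)}$, invoke Theorem \ref{thm:lyapunov}, lower-bound $\E(k)$ by completing the square, and control $\norm{u_k-z^*}$ via Lemma \ref{lem:2}. The first two steps check out: substituting into \eqref{eq:thm1assumption3} does reduce to an identity for this $C_k$, and $B_k=k+r-1$, $A_k=\beta(k+r-1)(k+r)$ follow as you say. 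You also correctly identify the one delicate point, namely that the constant-step case sits at the excluded boundary $D=-1$ of Lemma \ref{lem:1}, so the hypothesis $E_1<r^2-1$ of Lemma \ref{lem:2} must be checked by hand.

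That check is where your argument has a genuine gap: you assert it "can be verified using $\beta<1/L$," but it cannot. Tracing the constants, the coefficient of $\norm{u_{k+1}-z^*}$ in the recursion is
\[
|C_k|\cdot\frac{2B_{k+1}}{A_{k+1}}=\frac{L^2\beta^3}{2(k+r-1)(1-L^2\beta^2)}\cdot\frac{2}{\beta(k+r+1)}=\frac{L^2\beta^2}{(1-L^2\beta^2)\left[(k+r)^2-1\right]},
\]
so $E_1=\dfrac{L^2\beta^2}{1-L^2\beta^2}$, and the requirement $E_1<r^2-1$ is equivalent to $L\beta<\dfrac{\sqrt{r^2-1}}{r}$, which is a strictly stronger restriction than $\beta\in(0,1/L)$ (and $E_1\to\infty$ as $\beta\to 1/L^-$). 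Contrast this with the varying-step theorem, where the bound \eqref{eq:lem1_b} on $D$ was engineered precisely so that $E_1=\tfrac{r(r-1)}{2r-1}<r^2-1$ holds automatically; at $D=-1$ that safety margin disappears. (Your auxiliary inequality "$(k+r-1)(k+r)\geqslant(k+r)^2-1$" is also reversed, but it is not the issue: the product that actually appears is $(k+r-1)(k+r+1)=(k+r)^2-1$ exactly.) To close the proof you must either add the hypothesis $L\beta<\sqrt{r^2-1}/r$ (restricting the range of $\beta$ claimed in the corollary), or supply a different argument for the uniform boundedness of $\norm{u_k-z^*}$ that does not pass through Lemma \ref{lem:2} with this $E_1$; as written, for $\beta$ close to $1/L$ and moderate $r$ the recursion is vacuous at the early iterations and the uniform bound $\norm{u_k-z^*}\leqslant E_3\norm{z_0-z^*}$ is not established.
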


\section{Discussion of Stochastic SEG+}
\subsection{Symplectic Stochastic Extragradient+ Method}
In this section, we consider the following stochastic inclusion problem:\[
0=F(z)=\mathop{\mathbb{E}}_{\zeta}[F(z;\zeta)].
\]
Such stochastic inclusion problem has been wildly used to train GANs and train Adversarial training deep neural network classifiers. To solve the above problem, we proposed the following stochastic SEG+ framework.
\begin{subequations}
	\begin{align}
		\tilde{z}_{k+1}&=(1-\alpha_k) z_k+\alpha_ku_k,\label{eq:ssega}\\
		z_{k+\frac{1}{2}}&=\tilde{z}_{k+1}-(1-\alpha_k)\beta_k[F(z_k)+\zeta_k],\label{eq:ssegb}\\
		z_{k+1}&=\tilde{z}_{k+1}-\beta_k[F(z_{k+\frac{1}{2}})+\zeta_{k+\frac{1}{2}}],\label{eq:ssegc}\\
		u_{k+1}&=u_k-C_k[F(z_{k+1})+\zeta_{k+1}].\label{eq:ssegd}
	\end{align}
\end{subequations}
\begin{theorem}
	\label{thm:ssegrate}
	Let $\{z_{\frac{k}{2}}\}$, $\{\tilde{z}_{k+1}\}$ and $\{u_k\}$ be the sequences generated by \eqref{eq:ssega}-\eqref{eq:ssegd}. If $F$ is $L-$Lipschitz continuous and monotone, the parameters satisfy the following requirements:
	\begin{align*}
		\beta_k&\in (0,\frac{1}{L}),\\
		\frac{B_k\beta_k}{\alpha_k} & = \left(\frac{B_k\beta_k}{\alpha_k}+\frac{B_k\beta_k}{1-\alpha_k}\right)L^2\beta_k^2(1-\alpha_k), \\
		\frac{B_k\beta_k}{2\alpha_k}+\frac{B_k\beta_k}{2-2\alpha_k}&\geqslant B_{k+1}C_k+\frac{C_k(1-\alpha_k)B_{k+1}-C_kB_k}{2-2\alpha_k}+A_{k+1},\\
		\frac{B_k\beta_k}{2\alpha_k}&\geqslant A_k,\quad			\frac{(1-\alpha_k)B_{k+1}-B_k}{2C_k(1-\alpha_k)}\text{ is non-increasing},
	\end{align*}
	then we have
	\begin{align*}
		\expect{\E(k)}\leqslant&\  \expect{\E(0)}+\sum_{i=0}^{k-1}\left(\frac{B_i\beta_i}{1-\alpha_i}+\frac{B_i\beta_i}{\alpha_i}\right)L\beta_i\sigma_{i+\frac{1}{2}}^2+\frac{C_iB_i-C_k(1-\alpha_i)B_{i+1}}{2-2\alpha_i}\sigma_{i+1}^2\\
		&\ +\sum_{i=0}^{k-1}\left(\frac{B_i\beta_i}{2\alpha_i}+\frac{B_i\beta_i}{2-2\alpha_i}\right)[L^2\beta_i^2\sigma_{i+\frac{1}{2}}^2+ L^2\beta_i^2(1-\alpha_i)^2\sigma_i^2+2L^3\beta_i^3(1-\alpha_i)^2\sigma_i^2]. 
	\end{align*}
\end{theorem}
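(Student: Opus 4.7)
The plan is to lift the deterministic Lyapunov calculation of Theorem~\ref{thm:lyapunov} to the stochastic setting by substituting the noisy oracles, taking conditional expectation so that zero-mean cross terms vanish, and accumulating the surviving quadratic noise contributions into the right-hand side.

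First I would retain the same Lyapunov function
\[
\E(k) = \frac{A_k}{2}\norm{F(z_k)}^2 + B_k\inner{F(z_k), z_k-u_k},
\]
and split $\E(k+1) - \E(k)$ into the same pieces I and II as in the proof of Theorem~\ref{thm:lyapunov}. Each inner product in II is then expanded by substituting the stochastic updates \eqref{eq:ssega}--\eqref{eq:ssegd}, so that $z_{k+1}-z_k$ and $u_{k+1}-u_k$ expose the noise terms $\zeta_k$, $\zeta_{k+\frac{1}{2}}$, $\zeta_{k+1}$ explicitly. Conditioning on the natural filtration kills every term linear in a current noise while leaving behind variance-squared residues, so the bulk of the work is a careful line-by-line replay of the deterministic identities used to bound II.

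Second, I would isolate where each variance lands. The polarization identity applied to $-\inner{F(z_{k+1}) + \zeta_{k+1}, F(z_{k+\frac{1}{2}}) + \zeta_{k+\frac{1}{2}}}$ generates a $\sigma_{k+\frac{1}{2}}^2$-coefficient of the form $\bigl(\tfrac{B_k\beta_k}{1-\alpha_k}+\tfrac{B_k\beta_k}{\alpha_k}\bigr)L\beta_k$ after a Young split. The Lipschitz estimate $\norm{F(z_{k+1}) - F(z_{k+\frac{1}{2}})}^2 \le L^2\norm{z_{k+1} - z_{k+\frac{1}{2}}}^2$ now governs the noisy displacement $\beta_k[(1-\alpha_k)\zeta_k - \zeta_{k+\frac{1}{2}}]$, producing the $L^2\beta_k^2\sigma_{k+\frac{1}{2}}^2$, $L^2\beta_k^2(1-\alpha_k)^2\sigma_k^2$, and $2L^3\beta_k^3(1-\alpha_k)^2\sigma_k^2$ terms after a second triangle/Young step. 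The increment $\norm{u_{k+1}-u_k}^2$ carries a $C_k^2\sigma_{k+1}^2$ piece which, combined with the $B_{k+1}-B_k$ component of II under the monotonicity assumption on $\tfrac{(1-\alpha_k)B_{k+1}-B_k}{2C_k(1-\alpha_k)}$, yields exactly the stated $\tfrac{C_kB_k - C_k(1-\alpha_k)B_{k+1}}{2-2\alpha_k}\sigma_{k+1}^2$ coefficient.

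The principal obstacle is the algebraic bookkeeping required to certify that, once the variance contributions are peeled off, the residual deterministic coefficient of each of $\norm{F(z_k)}^2$, $\norm{F(z_{k+\frac{1}{2}})}^2$, and $\norm{F(z_{k+1})}^2$ is non-positive. The four coefficient hypotheses in the statement are tailored precisely for this purpose: the identity on $B_k\beta_k/\alpha_k$ zeroes out the $\norm{F(z_{k+\frac{1}{2}})}^2$ coefficient in the spirit of \eqref{eq:thm1assumption3}; the inequality $\tfrac{B_k\beta_k}{2\alpha_k}+\tfrac{B_k\beta_k}{2-2\alpha_k} \ge B_{k+1}C_k+\tfrac{C_k(1-\alpha_k)B_{k+1}-C_kB_k}{2-2\alpha_k}+A_{k+1}$ absorbs the $\norm{F(z_{k+1})}^2$ leftover; the inequality $\tfrac{B_k\beta_k}{2\alpha_k}\ge A_k$ controls the $\norm{F(z_k)}^2$ leftover; and the monotonicity condition ensures the correct sign when assembling the $u$-cross term. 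Taking total expectation on the resulting per-step inequality and telescoping from $i=0$ to $k-1$ then produces the stated upper bound on $\expect{\E(k)}$.
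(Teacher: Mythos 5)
Your overall strategy (replay the deterministic Lyapunov computation with noisy oracles and collect the surviving variance terms) is the right spirit, but two of your concrete choices would not reproduce the stated bound. First, you keep the two-term Lyapunov function $\E(k)=\frac{A_k}{2}\norm{F(z_k)}^2+B_k\inner{F(z_k),z_k-u_k}$ from Theorem~\ref{thm:lyapunov}, whereas the paper's proof works with the augmented function $\E(k)=A_k\norm{F(z_k)}^2+B_k\inner{F(z_k),z_k-u_k}+\frac{B_k-(1-\alpha_k)B_{k+1}}{2(1-\alpha_k)C_k}\norm{u_k-z^*}^2$. This third term is not optional: in the two-term difference, the only appearances of $\zeta_{k+1}$ are linear, paired with $F(z_{k+1})$, which is measurable with respect to the filtration generating $\zeta_{k+1}$, so those expectations genuinely vanish and no $\sigma_{k+1}^2$ contribution can ever be produced. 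The coefficient $\frac{C_iB_i-C_i(1-\alpha_i)B_{i+1}}{2-2\alpha_i}\sigma_{i+1}^2$ in the statement arises precisely from $\norm{u_{k+1}-u_k}^2=C_k^2\norm{F(z_{k+1})+\zeta_{k+1}}^2$ inside the expansion of the $\norm{u_k-z^*}^2$ term, and the hypothesis that $\frac{(1-\alpha_k)B_{k+1}-B_k}{2C_k(1-\alpha_k)}$ is monotone exists exactly so that this third term telescopes; your attribution of the $\sigma_{k+1}^2$ piece to "the $B_{k+1}-B_k$ component of II" does not work.

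Second, your assertion that "conditioning on the natural filtration kills every term linear in a current noise" is false for the terms that actually matter. Since $z_{k+1}$ is constructed from $\zeta_{k+\frac{1}{2}}$ and $z_{k+\frac{1}{2}}$ from $\zeta_k$, the cross terms $\expect{\inner{F(z_{k+1}),\zeta_{k+\frac{1}{2}}}}$ and $\expect{\inner{F(z_{k+\frac{1}{2}}),\zeta_k}}$ do not vanish; the paper controls them with a dedicated lemma (Lemma~\ref{lem:sseglemma}, the analogue of Lemma D.2 of \cite{lee21}), giving $|\expect{\inner{F(z_{k+1}),\zeta_{k+\frac{1}{2}}}}|\leqslant L\beta_k\sigma_{k+\frac{1}{2}}^2$ and $|\expect{\inner{F(z_{k+\frac{1}{2}}),\zeta_k}}|\leqslant L(1-\alpha_k)\beta_k\sigma_k^2$. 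These bounds are the source of the $\left(\frac{B_i\beta_i}{1-\alpha_i}+\frac{B_i\beta_i}{\alpha_i}\right)L\beta_i\sigma_{i+\frac{1}{2}}^2$ term, not a Young split of the polarization identity as you suggest (polarization plus the Lipschitz bound on $\norm{z_{k+1}-z_{k+\frac{1}{2}}}^2$ is instead what generates the bracketed $L^2\beta_i^2\sigma_{i+\frac{1}{2}}^2+L^2\beta_i^2(1-\alpha_i)^2\sigma_i^2+2L^3\beta_i^3(1-\alpha_i)^2\sigma_i^2$ group, and even there the $2L^3\beta_i^3$ piece again requires the correlation lemma). Your reading of the four parameter hypotheses as sign conditions on the residual coefficients of $\norm{F(z_k)}^2$, $\norm{F(z_{k+\frac{1}{2}})}^2$, $\norm{F(z_{k+1})}^2$ is correct, but without the augmented Lyapunov function and the correlation lemma the argument as proposed does not close.
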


\begin{algorithm}
	\label{al:sseg}
	\caption{Symplectic Stochastic Extragradient+ Method, SSEG+}
	\textbf{Input: }Operator $F$,  $L\in(0,+\infty), \rho\in(-\dfrac{1}{2\rho},+\infty)$\;
	\textbf{Initialization: }$z_0, u_0=z_0$\;
	\For{$k=0, 1, \cdots$}{
		$\tilde{z}_{k+1}=\dfrac{k}{k+r}z_k+\dfrac{r}{k+r}u_k$\;
		$z_{k+\frac{1}{2}}=\tilde{z}_{k+1}-\dfrac{k}{k+r}\dfrac{1}{L}[F(z_k)+\zeta_k]$\;
		$z_{k+1}=\tilde{z}_{k+1}-\dfrac{1}{L}[F(z_{k+\frac{1}{2}})+\zeta_{k+\frac{1}{2}}]$\;
		$u_{k+1}=u_k-\dfrac{D}{r}[F(z_{k+1})+\zeta_{k+1}]$.
	}	
\end{algorithm}
\begin{corollary}
	\label{coro:sseg+}
	Let $\{z_{\frac{k}{2}}\}$, $\{\tilde{z}_{k+1}\} $ and $\{u_k\}$ be the sequences generated by Algorithm \ref{al:sseg}. If $F$ is $L-$Lipschitz continuous and monotone, $r\geqslant 2$, $0<D\leqslant 1$, and there exists two positive constant $E_1$ and $E_2$ such that $\sigma_k^2\leqslant E_1\dfrac{\varepsilon}{rk}$, $\sigma_{k+\frac{1}{2}}^2\leqslant E_2\dfrac{\varepsilon}{r(k+r)}$, there exists a positive constant $E_3$ such that
	\begin{align*}
		\expect{\norm{F(z_k)}^2}\leqslant\frac{L(r^3-r^2)}{Dk^2}\dist(z_0,\zero(F))^2+E_3\varepsilon.
	\end{align*}
\end{corollary}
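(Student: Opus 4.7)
The plan is to apply Theorem \ref{thm:ssegrate} to the specific parameter choices of Algorithm \ref{al:sseg}, namely $\alpha_k = \frac{r}{k+r}$, $\beta_k = \frac{1}{L}$ and $C_k = \frac{D}{r}$, using the Lyapunov function
\[
\E(k) = \frac{Dk^2}{2L}\norm{F(z_k)}^2 + Drk\inner{F(z_k),\, z_k - u_k} + \frac{r^3 - r^2}{2}\norm{u_k - z^*}^2
\]
inherited from Theorem \ref{thm:sfbsmrate} with $\rho = 0$ and $G = 0$. Since $u_0 = z_0$, this gives $\E(0) = \frac{r^3 - r^2}{2}\norm{z_0 - z^*}^2$, and the ratio of $\E(0)$ with the coefficient $\frac{Dk^2}{2L}$ of $\norm{F(z_k)}^2$ is exactly the leading term $\frac{L(r^3-r^2)}{Dk^2}$ of the target bound. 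The first task is the routine algebraic verification that the parameter conditions of Theorem \ref{thm:ssegrate} are satisfied for these choices under $r \geq 2$ and $0 < D \leq 1$, which reduces to elementary inequalities since $\beta_k$ is constant and $1-\alpha_k = k/(k+r)$.

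The second task is to bound the noise-dependent sum produced by Theorem \ref{thm:ssegrate}. With $\beta_k = 1/L$ every factor $L\beta_k$ collapses to one, and the prefactors $\frac{B_i\beta_i}{\alpha_i}$ and $\frac{B_i\beta_i}{1-\alpha_i}$ grow polynomially in $i$. Combined with the variance hypotheses $\sigma_i^2 \leq E_1\varepsilon/(ri)$ and $\sigma_{i+\frac{1}{2}}^2 \leq E_2\varepsilon/(r(i+r))$, each summand is $O(\varepsilon \cdot i)$, so the total noise contribution up to iteration $k$ is $O(\varepsilon k^2)$. This produces $\expect{\E(k)} \leq \frac{r^3-r^2}{2}\norm{z_0 - z^*}^2 + C\varepsilon k^2$ for a constant $C$ depending on $r,D,L,E_1,E_2$.

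The final task is to isolate $\norm{F(z_k)}^2$ from $\E(k)$. Monotonicity of $F$ with $F(z^*) = 0$ gives $\inner{F(z_k),\, z_k - z^*} \geq 0$, and completing the square on $-Drk\inner{F(z_k),\, u_k - z^*} + \frac{r^3 - r^2}{2}\norm{u_k - z^*}^2$ absorbs the cross-term into a squared norm, yielding $\E(k) \geq \frac{Dk^2}{2L}\norm{F(z_k)}^2$ plus non-negative pieces. Dividing by $\frac{Dk^2}{2L}$, taking expectations and then infimum over $z^* \in \zero(F)$ produces the claimed bound with $E_3 = 2LC/D$.

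The main obstacle is bookkeeping in the final step: completing the square actually delivers only $\E(k) \geq \left(\frac{D}{2L} - \frac{D^2}{2(r-1)}\right)k^2\norm{F(z_k)}^2$ modulo monotonicity slack, so recovering the clean coefficient $\frac{D}{2L}$ announced in the corollary requires folding the $\frac{D^2 k^2}{2(r-1)}\norm{F(z_k)}^2$ correction into the non-negative monotonicity term $Drk\inner{F(z_k),\, z_k - z^*}$ or into the $E_3\varepsilon$ remainder. A parallel subtlety is that the variance decay $\sigma_i^2 = O(\varepsilon/i)$ is precisely the slowest rate for which $\frac{1}{k^2}\sum_i i^2 \sigma_i^2$ remains bounded in $k$, so the variance hypotheses in the corollary are tight for this analysis.
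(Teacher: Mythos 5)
The paper gives no proof of Corollary \ref{coro:sseg+} (only Theorem \ref{thm:ssegrate} is proved in the appendix), so your route --- specializing Theorem \ref{thm:ssegrate} to $\alpha_k=\frac{r}{k+r}$, $\beta_k=\frac{1}{L}$, $C_k=\frac{D}{r}$, with $A_k=\frac{Dk^2}{2L}$, $B_k=Drk$ and $u$-coefficient $\frac{r^3-r^2}{2}$ --- is clearly the intended one, and your treatment of the noise sum is correct: the dominant prefactors are $\frac{D(i+r)^2}{L}\sigma_{i+\frac12}^2$ and $O(i^2)\sigma_i^2$, each $O(\varepsilon i)$ under the stated variance decay, so the cumulative contribution is $O(\varepsilon k^2)$ and becomes $O(\varepsilon)$ after dividing by $A_k$. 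Your remark that the decay rates $\sigma_i^2=O(\varepsilon/i)$, $\sigma_{i+\frac12}^2=O(\varepsilon/(i+r))$ are exactly critical for this is also right.

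However, the two ``obstacles'' you flag at the end are genuine gaps that your proposal does not close, and one of them cannot be closed the way you suggest. First, the parameter verification is not routine: the third condition of Theorem \ref{thm:ssegrate} with these choices reduces to
\[
\frac{(r-1)(2k+r+1)}{2L}\ \geqslant\ D\left(k+\frac{3-r}{2}\right),
\]
which for large $k$ forces $D\leqslant\frac{r-1}{L}$; the stated hypotheses $r\geqslant 2$, $0<D\leqslant 1$ do not imply this when $L>1$, so either an additional restriction on $D$ (or a rescaling $C_k=\frac{D}{rL}$) is needed, and you should say so rather than call the check elementary. Second, and more seriously, the completed square gives $\E(k)\geqslant\left(\frac{D}{2L}-\frac{D^2}{2(r-1)}\right)k^2\norm{F(z_k)}^2+Drk\inner{F(z_k),z_k-z^*}$, and neither of your proposed repairs recovers the coefficient $\frac{D}{2L}$: the monotonicity term is merely non-negative and cannot dominate a term of the form $-\frac{D^2k^2}{2(r-1)}\norm{F(z_k)}^2$ without a cocoercivity-type assumption, and folding that term into $E_3\varepsilon$ is circular since $\expect{\norm{F(z_k)}^2}$ is precisely the quantity being bounded. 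What your argument actually proves (under the extra condition $D<\frac{r-1}{L}$) is a bound with leading constant $\frac{L(r^3-r^2)}{D}\bigl(1-\frac{DL}{r-1}\bigr)^{-1}$ in place of $\frac{L(r^3-r^2)}{D}$; the clean constant in the corollary as stated does not follow from this analysis, and you should either prove the weaker constant or identify the additional hypothesis under which the stated one holds.
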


\subsection{Proof of Theorem \ref{thm:ssegrate}}
Before proving Theorem \ref{thm:ssegrate}, we need the following lemma.
\begin{lemma}
	\label{lem:sseglemma}
	Let $\{z_k\}, \{u_k\}$ be the sequences generated by \eqref{eq:ssega}-\eqref{eq:ssegd}. If the random sequence $\{\zeta_{\frac{k}{2}}\}$ satisfies $\expect{\zeta_{\frac{k}{2}}}=0$, $ \expect{\zeta_{\frac{k}{2}}^2}=\sigma_{\frac{k}{2}}^2$ for all $k\geqslant 0$, then we have\begin{align*}
		\left|\expect{F(z_{k+1}),\zeta_{k+\frac{1}{2}}}\right| &\leqslant L\beta_k\sigma_{k+\frac{1}{2}}^2,  \\
		\left|\expect{F(z_{k+\frac{1}{2}}),\zeta_k}\right| & \leqslant L(1-\alpha_k)\beta_k\sigma_k^2.
	\end{align*}
\end{lemma}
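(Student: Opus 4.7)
The plan is to exploit the fact that each noise is independent of the iterates computed just before it is drawn, which lets me insert an auxiliary noise-free point inside the inner product at no cost in expectation. For the first bound I would introduce
\[
\hat{z}_{k+1} := \tilde{z}_{k+1} - \beta_k F(z_{k+\frac{1}{2}}),
\]
which by construction is measurable with respect to the history up through $z_{k+\frac{1}{2}}$ and is therefore independent of $\zeta_{k+\frac{1}{2}}$. Using $\expect{\zeta_{k+\frac{1}{2}}}=0$ together with independence,
\[
\expect{\inner{F(\hat{z}_{k+1}),\zeta_{k+\frac{1}{2}}}}=0,
\]
so
\[
\expect{\inner{F(z_{k+1}),\zeta_{k+\frac{1}{2}}}}=\expect{\inner{F(z_{k+1})-F(\hat{z}_{k+1}),\zeta_{k+\frac{1}{2}}}}.
\]
Taking absolute values and applying Jensen's inequality, Cauchy--Schwarz, and the $L$-Lipschitz continuity of $F$ in turn, the right-hand side is bounded by $L\,\expect{\norm{z_{k+1}-\hat{z}_{k+1}}\cdot\norm{\zeta_{k+\frac{1}{2}}}}$. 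From the update rule \eqref{eq:ssegc} I have $z_{k+1}-\hat{z}_{k+1}=-\beta_k\zeta_{k+\frac{1}{2}}$, so this collapses to $L\beta_k\expect{\norm{\zeta_{k+\frac{1}{2}}}^2}=L\beta_k\sigma_{k+\frac{1}{2}}^2$, which is the first claim.

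The second bound is obtained by running the identical argument one substep earlier. Define
\[
\hat{z}_{k+\frac{1}{2}} := \tilde{z}_{k+1} - (1-\alpha_k)\beta_k F(z_k),
\]
which is built only from $z_k$ and $u_k$ and is therefore independent of $\zeta_k$. The same chain of inequalities yields
\[
\left|\expect{\inner{F(z_{k+\frac{1}{2}}),\zeta_k}}\right|\leqslant L\,\expect{\norm{z_{k+\frac{1}{2}}-\hat{z}_{k+\frac{1}{2}}}\cdot\norm{\zeta_k}},
\]
and from \eqref{eq:ssegb} the difference $z_{k+\frac{1}{2}}-\hat{z}_{k+\frac{1}{2}}=-(1-\alpha_k)\beta_k\zeta_k$ reduces the right-hand side to $L(1-\alpha_k)\beta_k\sigma_k^2$.

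There is no real obstacle here: the argument is a routine application of the ``subtract a noise-free surrogate'' trick familiar from the stochastic gradient literature. The only place that demands care is the measurability bookkeeping, i.e., writing the proof in terms of a filtration $\mathcal{F}_k\subseteq\mathcal{F}_{k+\frac{1}{2}}\subseteq\mathcal{F}_{k+1}$ under which the gradient-noise terms drawn inside iteration $k$ are each conditionally zero-mean with the stated variance, so that $\hat{z}_{k+\frac{1}{2}}\in\mathcal{F}_k$ and $\hat{z}_{k+1}\in\mathcal{F}_{k+\frac{1}{2}}$; the zero-expectation step in both cases then follows via the tower property.
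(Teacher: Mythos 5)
Your proof is correct, and it is essentially the argument the paper intends: the paper does not spell out a proof but defers to Lemma D.2 of \cite{lee21}, which uses exactly this ``subtract a noise-free surrogate'' device, with $z_{k+1}-\hat{z}_{k+1}=-\beta_k\zeta_{k+\frac{1}{2}}$ and $z_{k+\frac{1}{2}}-\hat{z}_{k+\frac{1}{2}}=-(1-\alpha_k)\beta_k\zeta_k$ turning the Cauchy--Schwarz bound into the stated variance terms. You are also right that the zero-expectation step requires the noise to be conditionally zero-mean given the iterate history rather than merely unconditionally zero-mean as the lemma literally states; the paper itself later invokes $\expect{\zeta_{k+1}\mid z_{k+1}}=0$, so your filtration bookkeeping matches the intended hypothesis.
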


The proof for Lemma \ref{lem:sseglemma} is analogous to the proof for Lemma D.2 in \cite{lee21}. We refer the proof to \cite{lee21}.

\noindent\textbf{The proof of Theorem \ref{thm:ssegrate}}\quad Consider the following Lyapunov function\begin{equation}
	\E(k)=A_k\norm{F(z_k)}^2+B_k\inner{F(z_k),z_k-u_k}+\frac{B_k-(1-\alpha_k)B_{k+1}}{2(1-\alpha_k)C_k}\norm{u_k-z^*}^2,
\end{equation}
where $z^*$ is the zero-point of $F$, i. e. $F(z^*)=0$. We show that $\{\expect{\E(k)}\}$ is non-increasing.

Step 1: Divide the difference $\expect{\E(k+1)}-\expect{\E(k)}$ into three parts. 	
\begin{align*}
	&\expect{\E(k+1)}-\expect{\E(k)}\\
	=&\underbrace{A_{k+1}\expect{\norm{F(z_{k+1})}^2}-A_k\expect{\norm{F(z_k)}^2}}_{\text{I}}\\
	&+\underbrace{B_{k+1}\expect{\inner{F(z_{k+1}),z_{k+1}-u_{k+1}}}-B_k\expect{\inner{F(z_k),z_k-u_k}}}_{\text{II}} \\
	&+\underbrace{\frac{B_{k+1}-(1-\alpha_{k+1})B_{k+2}}{2(1-\alpha_{k+1})C_{k+1}}\expect{\norm{u_{k+1}-z^*}^2}-\frac{B_k-(1-\alpha_k)B_{k+1}}{2(1-\alpha_k)C_k}\expect{\norm{u_k-z^*}^2}}_{\text{III}}.
\end{align*}

Step 2: Reckon the upper bound of all three parts. First we consider II.
\begin{align*}
	\text{II}=&(B_{k+1}-B_k)\expect{\inner{F(z_{k+1}),z_{k+1}-u_{k+1}}}+B_k\expect{\inner{F(z_{k+1}),z_{k+1}-z_k-u_{k+1}+u_k}} \\
	&+B_k\expect{\inner{F(z_{k+1})-F(z_k),z_k-u_k}}.
\end{align*}
Since $\expect{\zeta_{k+1}|z_{k+1}}=0$, we have\[
\expect{F(z_{k+1}),\zeta_{k+1}}=0.
\]
Combining the above equation, \eqref{eq:ssegc} and  \eqref{eq:ssegd}, we have
\begin{equation}
	\begin{split}
		&\ B_k\expect{\inner{F(z_{k+1}),z_{k+1}-z_k-u_{k+1}+u_k}}\\
		=&\ B_k\expect{\inner{F(z_{k+1}),\frac{\alpha_k}{1-\alpha_k}(u_k-z_{k+1})-\frac{\beta_k}{1-\alpha_k}[F(z_{k+\frac{1}{2}})+\zeta_{k+\frac{1}{2}}]}}+B_kC_k\expect{\norm{F(z_{k+1})}^2}\\
		\leqslant&B_k\expect{\inner{F(z_{k+1}),\frac{\alpha_k}{1-\alpha_k}(u_k-z_{k+1})-\frac{\beta_k}{1-\alpha_k}F(z_{k+\frac{1}{2}})}}+\frac{B_kL\beta_k^2}{1-\alpha_k}\sigma^2_{k+\frac{1}{2}}+B_kC_k\expect{\norm{F(z_{k+1})}^2}.\\
	\end{split}
	\label{eq:thm3_a}
\end{equation}
In addition, because of \eqref{eq:ssegb}, we have\begin{equation}
	\begin{split}
		&\ B_k\expect{\inner{F(z_{k+1})-F(z_k),z_k-u_k}}\\
		=&\ B_k\expect{\inner{F(z_{k+1})-F(z_k),-\frac{1}{\alpha_k}(z_{k+1}-z_k)-\frac{\beta_k}{\alpha_k}[F(z_{k+\frac{1}{2}})+\zeta_{k+\frac{1}{2}}]}}\\
		\leqslant&\  B_k\expect{\inner{F(z_{k+1})-F(z_k),-\frac{1}{\alpha_k}(z_{k+1}-z_k)-\frac{\beta_k}{\alpha_k}F(z_{k+\frac{1}{2}})}}+\frac{B_kL\beta_k^2\sigma_{k+\frac{1}{2}}^2}{\alpha_k}.
	\end{split}
	\label{eq:thm3_b}
\end{equation}
Also,
\begin{equation}
	\label{eq:thm3_c}
	\begin{split}
		&\ (B_{k+1}-B_k)\expect{\inner{F(z_{k+1}),z_{k+1}-u_{k+1}}}\\
		=&\ (B_{k+1}-B_k)C_k\expect{\norm{F(z_{k+1})}^2}+(B_{k+1}-B_k)\expect{\inner{F(z_{k+1}),z_{k+1}-u_k+C_k\zeta_{k+1}}}\\
		=&\ (B_{k+1}-B_k)C_k\expect{\norm{F(z_{k+1})}^2}+(B_{k+1}-B_k)\expect{\inner{F(z_{k+1}),z_{k+1}-u_k}}.
	\end{split}
\end{equation}
By summing \eqref{eq:thm3_a}, \eqref{eq:thm3_b} and \eqref{eq:thm3_c}, we have
\begin{align*}
	\text{II}\leqslant&\ B_k\expect{\inner{F(z_{k+1}),\frac{\alpha_k}{1-\alpha_k}(u_k-z_{k+1})-\frac{\beta_k}{1-\alpha_k}F(z_{k+\frac{1}{2}})}}\\
	&\ +B_k\expect{\inner{F(z_{k+1})-F(z_k),-\frac{1}{\alpha_k}(z_{k+1}-z_k)-\frac{\beta_k}{\alpha_k}F(z_{k+\frac{1}{2}})}}+\left(\frac{B_k\beta_k}{1-\alpha_k}+\frac{B_k\beta_k}{\alpha_k}\right)L\beta_k\sigma_{k+\frac{1}{2}}^2\\
	&\ +B_{k+1}C_k\expect{\norm{F(z_{k+1})}^2}+(B_{k+1}-B_k)\expect{\inner{F(z_{k+1}),z_{k+1}-u_k}}.
\end{align*}
Since $F$ is $L-$Lipschitz continuous, we have\[
\norm{F(z_{k+1})-F(z_{k+\frac{1}{2}})}^2\leqslant L^2\norm{z_{k+1}-z_{k+\frac{1}{2}}}^2.
\]
Due to \eqref{eq:ssegb} and \eqref{eq:ssegc}, we have\[
\norm{z_{k+1}-z_{k+\frac{1}{2}}}^2=\beta_k^2\norm{F(z_{k+\frac{1}{2}})+\zeta_{k+\frac{1}{2}}-(1-\alpha_k)[F(z_k)+\zeta_k]}^2.
\]
By using Lemma \ref{lem:sseglemma}, we have\begin{align*}
	\expect{\norm{F(z_{k+1})-F(z_{k+\frac{1}{2}})}^2}  \leqslant&\  L^2\beta_k^2\expect{\norm{F(z_{k+\frac{1}{2}})}^2}+2L^2\beta_k^2(1-\alpha_k)\expect{\inner{F(z_{k+\frac{1}{2}}),F(z_k)}}+L^2\beta_k^2\sigma_{k+\frac{1}{2}}^2\\
	&\ + L^2\beta_k^2(1-\alpha_k)^2\sigma_k^2+L^2\beta_k^2(1-\alpha_k)^2\expect{\norm{F(z_k)}^2}+2L^3\beta_k^3(1-\alpha_k)^2\sigma_k^2.
\end{align*}
Because of the following equality\[
-\inner{F(z_{k+1}),F(z_{k+\frac{1}{2}})}=\frac{1}{2}\left[\norm{F(z_{k+1})-F(z_{k+\frac{1}{2}})}^2-\norm{F(z_{k+1})}^2-\norm{F(z_{k+\frac{1}{2}})}^2\right]
\]
and the assumption\[
\frac{B_k\beta_k}{\alpha_k}  = \left(\frac{B_k\beta_k}{\alpha_k}+\frac{B_k\beta_k}{1-\alpha_k}\right)L^2\beta_k^2(1-\alpha_k),
\]
we obtain the following estimation of II:
\begin{align*}
	\text{II}\leqslant&\  -\left(\frac{B_k\beta_k}{2\alpha_k}+\frac{B_k\beta_k}{2-2\alpha_k}-B_{k+1}C_k\right)\expect{\norm{F(z_{k+1})}^2}-\left(\frac{B_k\beta_k}{2\alpha_k}+\frac{B_k\beta_k}{2-2\alpha_k}\right)(1-L^2\beta_k^2)\expect{\norm{F(z_{k+\frac{1}{2}})}^2}\\
	&\ -\frac{B_k\beta_k}{2\alpha_k}\expect{\norm{F(z_k)}^2}-\frac{B_k}{1-\alpha_k}\expect{\inner{F(z_{k+1})-F(z_k),z_{k+1}-z_k}}\\
	&\ +\underbrace{\left(B_{k+1}-\frac{B_k}{1-\alpha_k}\right)\expect{\inner{F(z_{k+1}),z_{k+1}-u_k}}}_{\text{II}_1}+\left(\frac{B_k\beta_k}{1-\alpha_k}+\frac{B_k\beta_k}{\alpha_k}\right)L\beta_k\sigma_{k+\frac{1}{2}}^2\\
	&\ +\left(\frac{B_k\beta_k}{2\alpha_k}+\frac{B_k\beta_k}{2-2\alpha_k}\right)[L^2\beta_k^2\sigma_{k+\frac{1}{2}}^2+ L^2\beta_k^2(1-\alpha_k)^2\sigma_k^2+2L^3\beta_k^3(1-\alpha_k)^2\sigma_k^2].
\end{align*}

Now estimate III. Utilizing \eqref{eq:normdifference}, we have\begin{align*}
	\text{III}\leqslant&\ \frac{B_k-(1-\alpha_k)B_{k+1}}{2(1-\alpha_k)C_k}\expect{\norm{u_{k+1}-z^*}^2-\norm{u_k-z^*}^2}\\
	=&\ \underbrace{\frac{B_k-(1-\alpha_k)B_{k+1}}{1-\alpha_k}\expect{\inner{F(z_{k+1}),u_k-z^*}}}_{\text{III}_1}+\frac{C_kB_k-C_k(1-\alpha_k)B_{k+1}}{2-2\alpha_k}\left\{\expect{\norm{F(z_{k+1})}^2}+\sigma_{k+1}^2\right\}.
\end{align*}

Step 3: Deduce the upper bound of $\E(k+1)-\E(k)$. Noticing that $\text{II}_1+\text{III}_1=\dfrac{(1-\alpha_{k})B_{k+1}-B_k}{1-\alpha_k}\inner{F(z_{k+1}),z_{k+1}-z^*}$, we can simplify the upper bound of $\expect{\E(k+1)}-\expect{\E(k)}$ as follows:\begin{align*}
	&\ \expect{\E(k+1)}-\expect{\E(k)}\\
	\leqslant&\ -\left(\frac{B_k\beta_k}{2\alpha_k}+\frac{B_k\beta_k}{2-2\alpha_k}-B_{k+1}C_k-\frac{C_kB_k-C_k(1-\alpha_k)B_{k+1}}{2-2\alpha_k}-A_{k+1}\right)\expect{\norm{F(z_{k+1})}^2}\\
	&\ -\left(\frac{B_k\beta_k}{2\alpha_k}+\frac{B_k\beta_k}{2-2\alpha_k}\right)(1-L^2\beta_k^2)\expect{\norm{F(z_{k+\frac{1}{2}})}^2}-\left(\frac{B_k\beta_k}{2\alpha_k}-A_k\right)\expect{\norm{F(z_k)}^2}\\
	&\ -\frac{B_k}{1-\alpha_k}\expect{\inner{F(z_{k+1})-F(z_k),z_{k+1}-z_k}}+\left(B_{k+1}-\frac{B_k}{1-\alpha_k}\right)\expect{\inner{F(z_{k+1}),z_{k+1}-z^*}}\\
	&\ +\left(\frac{B_k\beta_k}{1-\alpha_k}+\frac{B_k\beta_k}{\alpha_k}\right)L\beta_k\sigma_{k+\frac{1}{2}}^2+\frac{C_kB_k-C_k(1-\alpha_k)B_{k+1}}{2-2\alpha_k}\sigma_{k+1}^2\\
	&\ +\left(\frac{B_k\beta_k}{2\alpha_k}+\frac{B_k\beta_k}{2-2\alpha_k}\right)[L^2\beta_k^2\sigma_{k+\frac{1}{2}}^2+ L^2\beta_k^2(1-\alpha_k)^2\sigma_k^2+2L^3\beta_k^3(1-\alpha_k)^2\sigma_k^2].
\end{align*}
\hfill$\Box$

\bibliographystyle{amsplain}
\bibliography{reference}

\end{document}